\newtheorem{theorem}{Theorem}[section]
\newcommand{\ter}{\hspace{\stretch{3}}$\square$\\[1.8ex]}
\newtheorem{lemma}[theorem]{Lemma}
\newtheorem{proposition}[theorem]{Proposition}
\newtheorem{remark}[theorem]{Remark}
\newtheorem{definition}[theorem]{Definition}
\def\Rd{\mathbb{R}^d}
\def\R{\mathbb{R}}
\def\E{\mathbb{E}}
\def\S{\mathcal{T}}
\def\s{{\cal S}^\prime(\Rd)}
\def\ss{{\cal S}(\Rd)}
\def\J{\mathcal{J}}
\def\l{\langle}
\def\r{\rangle}
\def\Ref#1{(\ref{#1})}
\def\ph{\phantom{\sum}\!\!\!\!\!\!\!\!\!\,}
\def\LL{\left}
\def\RR{\right}
\newcommand{\proof}{{\it \bf Proof:  }}
\begin{document}
	
	\title{\bf Occupation time fluctuations of an age-dependent critical binary branching particle system}
	\author{\sc J.A. L\'{o}pez-Mimbela\thanks{Centro de Investigaci\'{o}n en Matem\'{a}ticas, Guanajuato, Mexico. {\tt jalfredo@cimat.mx}} \and {\sc A.  Murillo-Salas}\thanks{Departamento de Matem\'{a}ticas, Universidad de Guanajuato, Guanajuato, Mexico. {\tt amurillos@ugto.mx}} \and {\sc J.H. Ram\'{\i}rez-Gonz\'{a}lez}\thanks{Centro de Investigaci\'{o}n en Matem\'{a}ticas, Guanajuato, Mexico. {\tt hermenegildo.ramirez@cimat.mx}} }
	\date{ }
	\maketitle
	
	\begin{abstract}
		We study  the limit of fluctuations of the rescaled occupation time process of a branching particle system in $\Rd$, where the particles are subject to symmetric $\alpha$-stable migration ($0<\alpha\leq2$), critical binary branching, and general non-lattice lifetime distribution.  We focus on two different regimes: lifetime distributions  having  finite expectation, and Pareto-type lifetime distributions, i.e.  distributions belonging to the normal domain of attraction of a $\gamma$-stable law with $\gamma\in(0,1)$.
		In the latter case we show that, for dimensions $\alpha\gamma<d<\alpha(1+\gamma)$, the fluctuations of the rescaled occupation time converge weakly to a centered Gaussian process whose covariance function is explicitly calculated, and we call it {\em weighted sub-fractional Brownian motion.} Moreover, in the case of lifetimes with finite mean, we show that for $\alpha<d<2\alpha$ the fluctuation limit turns out to be the same as in the case of exponentially distributed lifetimes  studied by Bojdecki {\em et al.\/} \cite{BGT,BGT1,BGT2}. We also investigate the maximal parameter range allowing  existence of the weighted sub-fractional Brownian motion and provide some of its fundamental properties, such as path continuity, long-range dependence, self-similarity and the lack of Markov property.
		
		\bigskip
		
		\noindent {\sc Key words and phrases}:  branching particle systems, critical binary branching, Pareto-type tail lifetimes, occupation time fluctuations, sub-fractional motion,  renewal theorem, long-range dependence.
		\bigskip
		
		\noindent MSC 2000 subject classifications: 60J80, 60E10.
	\end{abstract}

	\section{Introduction}
	
	Our aim in this paper is to investigate the occupation time fluctuations of a population  in $\Rd$ which evolves as follows.
	During its lifetime $S$,
	any given individual independently develops a spherically
	symmetric $\alpha$-stable process with infinitesimal generator the fractional power $\Delta_{\alpha}:=-(-\Delta)^{\alpha/2}$ of the Laplacian,
	$0<\alpha\le2$,
	and at the end of its life it
	either disappears, or  is replaced at the site where it died by two
	newborns, each event occurring with probability 1/2.  The population starts off from a Poisson random field having the Lebesgue measure $\Lambda$ as its intensity. Along with the usual independence assumptions in branching systems,  we also assume that the particle lifetimes have a general non-lattice distribution,  and that
	any individual in the initial population has age 0. We focus on two different regimes for the distribution of $S$: either $S$ has finite mean $\mu>0$ or  $S$ has a distribution function $F$ such that
	\begin{equation}\label{tail}
		F(0)=0,\quad F(x)<1 \mbox{ for all $x\geq0$},\quad \mbox{and}\quad
		1-F(t)\sim\frac{1}{t^\gamma\Gamma(1-\gamma)}\quad\mbox{as}\quad t\rightarrow\infty,
	\end{equation}
	where  $0<\gamma<1$ and $\Gamma$ denotes the usual Gamma function.
	
	Let $Z(t)$ be the counting measure in $\Rd$ whose atoms are the positions of particles alive at time $t$, and let $Z\equiv\{Z(t),\ t\geq0\}$.
	Recall that the occupation time of the
	measure-valued process $Z$ is again a measure-valued process
	$J\equiv\{J(t)$, $t\geq0\}$ which is given by
	\begin{equation*}
		\langle\varphi,J(t)\rangle:=\int_0^t\langle\varphi,Z(s)\rangle\,
		ds,\;\;t\geq0,
	\end{equation*}
	for all bounded measurable functions
	$\varphi:\mathbb{R}^d\rightarrow\mathbb{R}_+$, where the notation $\langle\varphi,\nu\rangle$ means $\int\varphi\,d\nu$.
	Following \cite{D-W} and \cite{BGT}, for each $T>0$ we introduce the rescaled occupation time process
	${J}_T(t):={J}(Tt)$ defined by
	\begin{equation*}
		\langle\varphi,{J}_T(t)\rangle=\int_0^{Tt}\langle\varphi,Z(s)\rangle
		ds=T\int_0^t\langle\varphi,Z(Ts)\rangle \,ds,\quad t\geq0,
	\end{equation*}
	and the rescaled occupation time fluctuation process
	$\{\mathcal{J}_T(t),t\geq0\}$ given by
	\begin{eqnarray*}
		\langle\varphi,\mathcal{J}_T(t)\rangle:=\frac{1}{H_T}\left(\langle\varphi,{J}_T(t)\rangle-\ph \E
		\langle\varphi,{J}_T(t)\rangle
		\right),\quad t\geq0,
	\end{eqnarray*}
	where $H_T$ is a normalization factor such that  $H_T\to\infty$ as $T\to\infty$.
	It was shown in \cite{LM-MS} that, due to criticality of the branching and  invariance of $\Lambda$ for
	the $\alpha$-stable semigroup,  $\mathbb{E}\langle\varphi,{J}_T(t)\rangle=Tt\langle\varphi,\Lambda\rangle$. Hence, the rescaled occupation time fluctuation process takes the form
	\begin{eqnarray}\label{rescaled}
		\langle\varphi,\mathcal{J}_T(t)\rangle:=\frac{1}{H_T}\left(\langle\varphi,{J}_T(t)\rangle-\ph Tt\langle\varphi,\Lambda\rangle\right),\quad t\geq0.
	\end{eqnarray}
	The Markovian case, i.e. the case of exponentially distributed particle lifetimes, has been thoroughly investigated by T. Bojdecki, L.$\:$G. Gorostiza and A. Talarczyk in a series of seminal works, see \cite{BGTWeighted,BGT,BGT1,BGT2,BGT-gamma}. Among other results, they showed that when $S$ possesses  an exponential distribution and $\alpha<d<2\alpha$, the occupation time fluctuation process, properly  rescaled, converges weakly  toward a Gaussian process in the   space $C([0,\eta],\s)$ of continuous paths $w:[0,\eta]\to\s$ for any $\eta>0$, where $\s$ denotes the space of tempered distributions, i.e. the strong dual of the space $\ss$ of rapidly decreasing smooth functions. The limit process has a simple spatial structure whereas the temporal structure is characterized by that of {\it sub-fractional Brownian motion} (sub-fBm), i.e. a continuous centered Gaussian process $\{\zeta_t,\ t\geq0\}$ with covariance function
	\begin{equation}\label{Cv_sfBm}
		\mathcal{C}(s,t):=s^{h}+t^{h}-\frac{1}{2}\left[(s+t)^{h}+|s-t|^{h}\right],\quad s,t\geq0,
	\end{equation}
	with $h=3-d/\alpha$ ($h\in(1,2)$); see \cite{BGT1}.  According to \cite{BGT}, sub-fBm exists for all $h\in(0,2)$. For $h\neq1$ this process does not have stationary independent increments, but possesses the so-called long-range dependence property, and for $h=1$ it reduces to  Brownian motion.
	
	It is known \cite{Athreya-Ney} that the process $Z$ fails to be Markovian if $S$ does not have an exponential distribution. There are relatively few publications on models related to non-Markovian spatial branching systems. Laws of large numbers for the occupation times of $Z$ have been investigated in \cite{M} and \cite{LM-MS}. Diffusion limit-type approximations for branching systems with non-exponential particle lifetimes were developed in \cite{FVW} and \cite{KS}. Existence of a non-trivial equilibrium distribution for such kind of models was studied in \cite{VW}.
	
	Assume that $F$ is a general absolutely continuous function obeying (\ref{tail}). In this paper we prove that for dimensions  satisfying $\alpha\gamma<d<\alpha(1+\gamma)$, the occupation time fluctuation limit exists and is a centered Gaussian process  whose covariance function has a simple spatial structure, but its temporal structure is dictated, for the case $d\neq\alpha$, by a fractional noise with covariance function
	\begin{equation}\label{Wsubfbm0}
		{Q}(s,t):=\LL(\frac{d}{\alpha}-1\RR)^{-1}\int_0^{s\wedge t}r^{\gamma-1}\left[(s-r)^{2-d/\alpha}+( t-r)^{2-d/\alpha}-(t+s-2r)^{2-d/\alpha}\right]dr,\quad s,t\ge0,
	\end{equation}
	whereas for the case $d=\alpha$,  the limit  is a  centered Gaussian process whose  covariance function has a temporal structure determined by
	\begin{equation*}
		K(s,t):=\int_{0}^{s\wedge t}r^{\gamma-1}\left[(s+t-2r)\ln(s+t-2r)-(s-r)\ln(s-r)-
		\ph(t-r)\ln(t-r)\right]\,dr;
	\end{equation*}
	see Theorem \ref{main1} below.
	The special but important case of particle lifetimes with finite mean is dealt with in Theorem \ref{main}, where we show that for dimensions satisfying  $\alpha< d < 2\alpha$  the limit process is  centered Gaussian, with covariance function of the form (\ref{Cv_sfBm}). Hence, Theorem \ref{main} extends  Theorem 2.2 in \cite{BGT1}  to the case of non-exponential particle lifetimes with finite mean. Moreover,
	in this case the effect of the lifetime distribution becomes apparent only through its mean.
	
	To obtain these results we follow the method of proof used in \cite{BGT1}, i.e. the space-time random field weak convergence approach developed in \cite{BGR}, combined with the Feynman-Kac formula. However the adaptation to our case of such method is far from being straightforward.   Besides the lack of Markov property of $Z$, in our more general scenario the use of a Feynman-Kac formula is much more involved than in \cite{BGT1} due to the fact that the renewal function  associated to $F$
	is in general nonlinear, in contrast to the linear renewal function of exponential lifetimes.
	
	Notice that the  function (\ref{Wsubfbm0}) is a special case of the function ${Q}_{a,b}$ given by \begin{equation}\label{Wsubfbm1}
		{Q}_{a,b}(s,t):=\frac{1}{1-b}\int_0^{s\wedge t}r^{a}\left[(s-r)^{b}+( t-r)^{b}-(t+s-2r)^{b}\right]dr, \quad s,t\geq0,\quad a,b\in\R,
	\end{equation}
	and that, for $a=0$, (\ref{Wsubfbm1}) is the covariance function of the sub-fractional Brownian motion for $|b|<1$.
	Several other interesting cases arise as special instances of (\ref{Wsubfbm1}); see Remark \ref{variaciones} bellow. This motivated our second goal in this paper, which  is to determine suitable values  of the parameters $a,b\in\R$ for which ${Q}_{a,b}$ is a covariance function.
	It turns out that,  if the parameters $a,b$ are restricted to the domains $a>-1$ and $b\in[0,2]$ with $b\neq1$, or $a>-1$ and $-1<b<0$ with $a+b+1\geq  0$,   the function  ${Q}_{a,b}$ is positive definite; see  Theorem \ref{QgeneralTh} below.  A centered real-valued Gaussian process with covariance function (\ref{Wsubfbm1}) will be called {\it weighted sub-fractional Brownian motion}, in analogy to  the weighted fractional Brownian motion introduced in \cite{BGTWeighted}.  We recall that a weighted fractional Brownian motion is a centered Gaussian  process $\eta:=\{\eta(t),\ t\geq0\}$  with covariance function of the form
	\begin{equation}\label{W-fractinal-cov}
		H_{a,b}(s,t):= \int_{0}^{s\wedge t}r^{a}\left[(s-r)^{b}+(t-r)^{b}\right]dr,\quad s,t\geq0,
	\end{equation}
	for $a>-1$,  $-1<b\leq 1$ and $|b|\leq 1+a$; see  \cite[Thm. 2.1]{BGTWeighted}.
	In Theorem \ref{limit-properties} we show  that any weighted sub-fractional Brownian motion $\{\varsigma(t)$, $t\ge0\}$ possesses long memory (also called long-range dependence), in the sense that \[\E\left[(\varsigma(t+T)-\ph\varsigma(s+T))(\varsigma(v)-\varsigma(r))\right]\sim T^{b-2}\frac{b}{(a+1)(a+2)}(t-s)(v^{a+2}-r^{a+2})\quad\mbox{as}\quad T\rightarrow\infty.\]
	It is worth to mention that the weighted fractional Brownian motion $\eta$ also exhibits the long-range dependence property. In this case, \[\E\left[(\eta(t+T)-\ph\eta(s+T))(\eta(v)-\eta(r))\right]\sim T^{b-1}\frac{b}{a+1}(t-s)(v^{a+1}-r^{a+1})\quad\mbox{as}\quad T\rightarrow\infty;\]
	see  \cite{BGTWeighted}.

	The rest of the paper is organized as follows. In Section \ref{mainproof} we state the main results in this paper. In Section \ref{LP} we prove a recursive relation for the Laplace functional of the branching particle system which we will need in the sequel, and that  it might be of interest on its own right. Section \ref{PROOFS} is devoted to the proof of our main results.
	%

	\section{Main results } \label{mainproof} Recall that we restrict ourselves to a particle system $Z$ whose branching mechanism is critical binary,  as described in the previous section.
	In
	what follows, the symbol $\Rightarrow$ denotes weak convergence.
	
	\subsection{Fluctuation limit theorems}
	
	\begin{theorem} \label{main1} Let $F$ be an absolutely continuous lifetime distribution function satisfying (\ref{tail}). 
		Let  $\alpha\gamma<d<\alpha(1+\gamma)$ and $H_T=T^{(2+\gamma-d/\alpha)/2}$. Then $\J_T\Rightarrow \J$ in $C([0, \Upsilon],\s)$ as $T\rightarrow\infty$ for any $\Upsilon>0$, where $\{\J(t),\ t\geq0\}$ is a centered Gaussian process whose covariance function is given in the following way:
		\begin{enumerate}[(i)]
			\item For $d\neq\alpha$,
			\begin{eqnarray*}
				\mbox{\rm Cov}(\l\varphi,\J(s)\r,\l\psi,\J(t)\r)=\LL[\frac{\gamma\langle\varphi,\lambda\rangle \langle\psi,\lambda\rangle}{\Gamma(\gamma+1)(2\pi)^d(2-\frac{d}{\alpha})}\int_{\Rd}e^{-|y|^\alpha}dy\RR]{Q}(s,t),\quad s,t\geq0,
			\end{eqnarray*}
			where $\varphi,\psi\in \ss$ and
			\begin{equation}\label{Wsubfbm}
				Q(s,t)=\left(\frac{d}{\alpha}-1\right)^{-1}\int_0^{s\wedge t}r^{\gamma-1}\left[(s-r)^{2-d/\alpha}+( t-r)^{2-d/\alpha}\ph-(t+s-2r)^{2-d/\alpha}\right]dr.
			\end{equation}
			\item For $d=\alpha$,
			\begin{eqnarray*}
				\mbox{\rm Cov}(\l\varphi,\J(s)\r,\l\psi,\J(t)\r)=\LL[\frac{\gamma\langle\varphi,\lambda\rangle \langle\psi,\lambda\rangle}{\Gamma(\gamma+1)(2\pi)^d}\int_{\Rd}e^{-|y|^\alpha}dy\RR]K(s,t),\quad s,t\geq0,
			\end{eqnarray*}
			where $\varphi,\psi\in \ss$ and
			\begin{equation*}
				K(s,t):=\int_{0}^{s\wedge t}r^{\gamma-1}\left[(s+t-2r)\ln(s+t-2r)\ph-(s-r)\ln(s-r)-(t-r)\ln(t-r)\right]dr.
			\end{equation*}
		\end{enumerate}
	\end{theorem}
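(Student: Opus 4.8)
The plan is to follow the space-time random field method of \cite{BGR} as adapted in \cite{BGT1}, reducing the weak convergence of $\J_T$ in $C([0,\Upsilon],\s)$ to two ingredients: (a) convergence of the finite-dimensional distributions, which here amounts to showing that for every $\varphi\in\ss$ and every $0\le t_1\le\cdots\le t_k$ the joint characteristic functions converge to those of the claimed centered Gaussian, and (b) tightness in $C([0,\Upsilon],\s)$. For (a) one works with the space-time variable: writing $\Phi(s,x)=\sum_j\theta_j\varphi(x)\mathbf 1_{[0,t_j]}(s)$, one needs the asymptotics as $T\to\infty$ of
\begin{equation*}
\log\E\exp\LL\{i\l\Phi,\J_T\r\RR\}=\log\E\exp\LL\{\frac{i}{H_T}\int_0^{\Upsilon T}\l\Phi(s/T,\cdot),Z(s)\r\,ds-i\,(\text{mean term})\RR\}.
\end{equation*}

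First I would invoke the recursive relation for the Laplace functional of $Z$ established in Section~\ref{LP}; because $Z$ is not Markovian, this takes the form of a renewal-type integral equation rather than a PDE, and the key analytic object entering it is the renewal function $U$ associated with $F$. Under \Ref{tail}, $U$ is regularly varying: the renewal theorem for Pareto-type tails gives $U(t)\sim t^\gamma/\Gamma(1+\gamma)$ as $t\to\infty$ (this is the role of \Ref{tail} and the Gamma normalization), while for finite-mean lifetimes $U(t)\sim t/\mu$. Substituting the scaled time $s=Tr$ and using the second-order Taylor expansion $\log\E e^{-\l\psi,Z\r}\approx -\l\psi,\Lambda\r+\frac12\,(\text{variance functional})$, the exponent organizes into a double time integral over $0\le r_1\le r_2$ of a product of (i) a factor coming from the increment of the renewal function between the two branching-relevant times, which after rescaling contributes $r^{\gamma-1}\,dr$ (the derivative of $r^\gamma$), and (ii) the $\alpha$-stable transition density evaluated at the time difference, whose self-similarity $p_t(x)=t^{-d/\alpha}p_1(t^{-1/\alpha}x)$ produces the exponent $2-d/\alpha$ after integrating the two spatial copies of $\varphi$ against Lebesgue measure and accounting for the normalization $H_T^2=T^{2+\gamma-d/\alpha}$. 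Collecting these gives exactly the kernel
\begin{equation*}
\int_0^{s\wedge t}r^{\gamma-1}\LL[(s-r)^{2-d/\alpha}+(t-r)^{2-d/\alpha}-(t+s-2r)^{2-d/\alpha}\RR]dr,
\end{equation*}
with the constant $\LL(\frac d\alpha-1\RR)^{-1}$ emerging from the elementary identity $\int_0^u(u-r)^{-d/\alpha}\,dr\cdot(\cdot)$ type computation; the case $d=\alpha$ is the boundary where $2-d/\alpha=1$ and a logarithmic term appears instead, handled by the limiting form $\lim_{\beta\to1}(\beta-1)^{-1}(x^\beta-x)=x\ln x$, yielding $K(s,t)$. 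The constant prefactors $\gamma\l\varphi,\lambda\r\l\psi,\lambda\r/(\Gamma(\gamma+1)(2\pi)^d(2-d/\alpha))\int e^{-|y|^\alpha}dy$ are bookkeeping of the Fourier normalization of $p_1$ and the renewal constant.

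For tightness (b) I would use the standard criterion in $C([0,\Upsilon],\s)$ (as in \cite{BGT1}, via \cite{BGR}): it suffices to bound, uniformly in $T$, a moment of the increments, e.g. $\E\l\varphi,\J_T(t)-\J_T(s)\r^{2n}\le C_n\,(t-s)^{1+\delta}$ for some $\delta>0$, which reduces to estimating the same Laplace/renewal functionals and using the regular variation of $U$ together with the integrability of $p_1$ coming from the dimension restriction $d<\alpha(1+\gamma)$; the lower restriction $d>\alpha\gamma$ is what makes the time kernel integrable near $r=0$ after the exponent $2-d/\alpha$ combines with $r^{\gamma-1}$. The main obstacle, and the step demanding real work, is the Feynman–Kac / renewal analysis: unlike the exponential case where the renewal function is linear and the Feynman–Kac formula closes into a clean PDE, here one must control the nonlinear renewal function $U$ uniformly over the relevant time scales, show that replacing $U$ by its asymptotic power $t^\gamma/\Gamma(1+\gamma)$ incurs only negligible error in the rescaled limit, and verify that the error terms in the second-order expansion of the Laplace functional (the higher-order branching contributions) vanish after division by $H_T^2$. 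Establishing these uniform renewal estimates and the attendant dominated-convergence arguments is where the bulk of the proof lies; once they are in place, parts (i) and (ii) follow by the deterministic kernel computations sketched above, and the finite-mean analogue (Theorem~\ref{main}) is the formally simpler special case $\gamma=1$, $U(t)\sim t/\mu$, reproducing \Ref{Cv_sfBm} with $h=3-d/\alpha$.
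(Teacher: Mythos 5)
Your proposal follows essentially the same route as the paper: the space-time random field method of \cite{BGR} as in \cite{BGT1}, combined with the renewal-type Laplace functional equation of Section \ref{LP}, the renewal asymptotics $U(T)\sim T^{\gamma}/\Gamma(1+\gamma)$, identification of the limit of the quadratic term as the triple integral that reduces to $Q(s,t)$ (resp.\ $K(s,t)$ when $d=\alpha$), vanishing of the remaining error terms, and tightness via a moment bound on increments together with Mitoma's theorem. The only cosmetic differences are that the paper works with Laplace functionals of nonnegative product-form test functions (extending to general ones as in \cite{BGT1}) rather than characteristic functions, and its tightness step uses the explicit covariance formula for $Z$ from \cite{LM-MS} with just a second moment ($\rho=2+\gamma-d/\alpha>1$); the estimates you correctly flag as the bulk of the work (the uniform convergence of $dU(Tr)/U(T)$ to $\gamma r^{\gamma-1}dr$ and the vanishing of the nonlinear terms $I_1$, $I_2$, $I_4$) are precisely where the paper's proof is concentrated.
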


	\begin{theorem} \label{main} Let $F$ be an absolutely continuous lifetime distribution function with finite mean $\mu>0$. Let  $\alpha<d<2\alpha$ and $H_T=T^{(3-d/\alpha)/2}$. Then $\J_T\Rightarrow \J$ in $C([0,\Upsilon],\s)$ as $T\rightarrow\infty$ for any $ \Upsilon>0$, where $\{\J(t),\ t\geq0\}$ i a centered Gaussian process with covariance function
		\begin{eqnarray*}
			\mbox{\rm Cov}(\l\varphi,\J(s)\r,\l\psi,\J(t)\r)=\frac{\langle\varphi,\Lambda\rangle\langle\psi,\Lambda\rangle \Gamma(2-h)}{2^{d-1}\pi^{d/2}\mu\alpha \Gamma(d/2)h(h-1)}\mathcal{C}(s,t),\quad s,t\geq0,
		\end{eqnarray*}
		where $h=3-d/\alpha$, $\varphi,\psi\in \ss$ and ${\cal C}(s,t)$ is given by (\ref{Cv_sfBm}).
	\end{theorem}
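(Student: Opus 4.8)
\emph{Strategy of proof.} The plan is to run the space--time random field weak-convergence method of \cite{BGR}, exactly as in the proof of Theorem~\ref{main1} (of which the present statement is the ``linear renewal'' endpoint), and to point out where the finite-mean hypothesis enters. To prove $\J_T\Rightarrow\J$ in $C([0,\Upsilon],\s)$ it suffices to show (a) that the Laplace functionals $\E\exp(-\langle\Psi,\J_T\rangle)$ converge for nonnegative space--time test functions $\Psi(x,r)=\varphi(x)\chi(r)$ with $\varphi\in\ss$, $\varphi\ge0$ and $\chi\in C([0,\Upsilon])$, $\chi\ge0$ (which, after the usual polarization and density arguments, yields the finite-dimensional distributions of $\J$), and (b) that $\{\J_T\}_{T}$ is tight in $C([0,\Upsilon],\s)$.

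For part (a), since the population starts from a Poisson random field with intensity $\Lambda$,
\[
\E\exp\left(-\langle\Psi,\J_T\rangle\right)=\exp\left(\frac{T\langle\varphi,\Lambda\rangle}{H_T}\int_0^{\Upsilon}r\chi(r)\,dr\right)\exp\left(-\int_{\Rd}\bigl(1-V_T(x)\bigr)\,dx\right),
\]
where the first (deterministic) factor compensates the subtracted mean $\E\langle\varphi,J_T(t)\rangle=Tt\langle\varphi,\Lambda\rangle$ and blows up since $T/H_T=T^{(d/\alpha-1)/2}\to\infty$, while $1-V_T$ is the one-ancestor (age $0$) Laplace functional of the rescaled occupation time. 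Using the recursive relation for the Laplace functional established in Section~\ref{LP} together with the Feynman--Kac formula, $V_T$ is shown to satisfy a nonlinear integral equation of renewal type involving the symmetric $\alpha$-stable transition density $p$, the renewal function $U=\sum_{n\ge0}F^{*n}$ associated with $F$, and a quadratic term produced by the critical binary branching. In the exponential case $U(dr)=\delta_0(dr)+\lambda\,dr$ is linear, and this is the simplification exploited in \cite{BGT1}.

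The heart of the proof is the asymptotic analysis of $-\log\E\exp(-\langle\Psi,\J_T\rangle)$ as $T\to\infty$. Criticality makes the first-order contribution cancel the compensating factor, so one must extract the second-order behaviour: expand $V_T$ to second order in the quadratic nonlinearity, rescale space via the self-similarity $p_t(x)=t^{-d/\alpha}p_1(t^{-1/\alpha}x)$, and invoke the elementary renewal theorem, $U([0,t])\sim t/\mu$ as $t\to\infty$ (with Blackwell's theorem for the non-lattice refinement), to replace the generally nonlinear $U$ by its linear asymptotics $t/\mu$. It is exactly this replacement that makes the finite-mean regime collapse onto the Markovian picture of \cite{BGT1}, the lifetime law entering the final constant only through $\mu$. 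The surviving double integral, after the change of variables, is an integral of products of stable densities; evaluating it by Plancherel's identity via $\widehat{p_u}(\xi)=e^{-u|\xi|^{\alpha}}$ produces the Gamma factors and the Gaussian-type spatial constant, and the temporal integrations reproduce $\mathcal{C}(s,t)$ of \eqref{Cv_sfBm} with $h=3-d/\alpha$. The hypothesis $d<2\alpha$ (i.e. $h>1$) ensures both that contributions of order higher than the second vanish after division by $H_T^{2}$ and that the increments of the limit have exponent $>1$; the hypothesis $d>\alpha$ (i.e. $h<2$) ensures convergence at infinity of the remaining one-dimensional integral, hence finiteness of the constant.

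For part (b) I would use Mitoma's theorem to reduce tightness in $C([0,\Upsilon],\s)$ to tightness of $\{\langle\varphi,\J_T(\cdot)\rangle\}_{T}$ in $C([0,\Upsilon],\R)$ for each $\varphi\in\ss$, and then establish, from the same Laplace-functional estimate specialized to two time points, the uniform bound $\E\bigl(\langle\varphi,\J_T(t)\rangle-\langle\varphi,\J_T(s)\rangle\bigr)^{2}\le C_{\varphi}\,|t-s|^{h}$ with $h>1$ (passing to a higher even moment if strict slack above $1$ is wanted). Continuity and positive-definiteness of the limit covariance place $\J$ in $C([0,\Upsilon],\s)$, convergence of the rescaled log-Laplace functionals to a quadratic form forces $\J$ to be centered Gaussian, and identifying that quadratic form with the displayed kernel completes the proof. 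The step I expect to be the main obstacle is the rigorous, uniform replacement $U([0,t])\approx t/\mu$ carried out \emph{inside} the nonlinear renewal equation for $V_T$: because $Z$ is not Markovian and $U$ is nonlinear for non-exponential $F$, both the Feynman--Kac representation and the control of all remainder terms are considerably more delicate here than in \cite{BGT1}.
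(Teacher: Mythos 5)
Your proposal follows essentially the same route as the paper: the paper proves Theorem \ref{main} by repeating the proof of Theorem \ref{main1} (the space--time random field method of \cite{BGR}, the Laplace functional analysed through the renewal equation of Section \ref{LP} and the Feynman--Kac formula, the quadratic term supplying the covariance while the remainder terms vanish, Plancherel yielding the constants, and Mitoma's theorem plus a second-moment increment bound of order $|t-s|^{3-d/\alpha}$ for tightness), with the single modification that the renewal asymptotics $U(Tr)/T\to r/\mu$ (formally ``$\gamma=1$'') replace $U(Tr)/U(T)\to r^{\gamma}$. Your identification of the elementary renewal theorem as the exact point where the finite-mean hypothesis enters, and of the second-order expansion as the surviving contribution, matches the paper's argument.
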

	
	\begin{remark}
		The case of ``large" dimensions, i.e.  $d\geq 2\alpha$ for lifetimes with finite mean, and $d\geq \gamma(1+\alpha)$ for heavy-tailed lifetimes, are part of an ongoing research project.  Presently we  can mention that, in the case of finite mean, we get the same  Theorem 2.2 of \cite{BGT2}. On the other hand, for the case of $d\geq \alpha(1+\gamma)$  we get a very different behavior, compared to the finite mean case. In particular, for $d=\alpha(1+\gamma)$  we have that the covariance function of the fluctuations limit  has a temporal structure  of the form 
		\[ C_1 Q_\gamma (s,t) +C_1 (s\wedge t),
		\]
		where $C_1$ and $C_2$ are positive constants and $Q_\gamma(s,t)=(s\wedge t)^\gamma$, for $\gamma\in (0,1)$. Whereas, for $d>\alpha(1+\gamma)$ we only have the Brownian part, as in the case of finite mean. 
		
	\end{remark} 
	
	\begin{remark}

		It is not too difficult to see that our arguments to prove Theorem \ref{main1} can be adapted to the setting of the high density limit in \cite{BGTH};  see Section \ref{PROOFS} below. In particular, with $H_T^{2}=F_TT^{2+\frac{\gamma}{2}-\frac{d}{\alpha}}$ where $F_T \overset{T\to \infty}{\to} \infty$ and $\lim_{T \to \infty}F_T^{-1}T^{\gamma-\frac{d}{\alpha}}=0$, we can prove a result parallel to Theorem 2.2 in \cite{BGTH}, for $\beta=1$. Thus, under the assumption $d\leq \alpha\gamma $ we will have the same limit as in Theorem \ref{main1} (i). That is, the temporary structure of the occupation time fluctuations has as its limit a weighted fractional Brownian motion with parameters $a=\gamma-1$ and $b=2-
		\frac{d}{\alpha} \in (1,2)$.
		
	\end{remark}

	\subsection{Weighted sub-fractional Brownian motion}\label{W_section}
	
	In this section we give conditions on the parameters   $a$ and $b$, under which the function $Q_{a,b}$ given in (\ref{Wsubfbm1}) is a covariance function. Moreover, when
	$Q_{a,b}$ is a covariance
	we provide several properties of the associated  centered Gaussian process.  In addition, we introduce the notion of {\em weighted sub-fractional Brownian motion}.

	\begin{theorem}\label{QgeneralTh} For  $a,b>-1$ with $b\neq 1$, the function
		\begin{equation}\label{Qgeneral}
			Q_{a,b}(w,z):=\frac{1}{1-b}\int_{0}^{z\wedge w}s^{a}\left[(z-s)^{b}+(w-s)^{b}-(w+z-2s)^{b}\right]ds,\quad w,z\geq0,
		\end{equation}
		is positive definite in the following cases:
		\begin{itemize}
			\item[(i)] $a>-1$ and $0\leq b\leq 2$.
			\item[(ii)]  $a>-1$ and $-1<b<0$ with $a+b+1\geq  0$.
		\end{itemize}
	\end{theorem}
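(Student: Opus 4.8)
The plan is to exhibit $Q_{a,b}$ as an explicit covariance by writing it as the covariance of an integral of independent increments against a deterministic kernel, reducing positive definiteness to an elementary non-negativity statement. First I would observe the key algebraic identity: for $b\in[0,2]$, $b\neq1$,
\begin{equation*}
\frac{(z-s)^{b}+(w-s)^{b}-(w+z-2s)^{b}}{1-b}=c_b\int_{0}^{\infty}\left[1-e^{-u(z-s)}\right]\left[1-e^{-u(w-s)}\right]u^{-1-b}\,du
\end{equation*}
for a suitable positive constant $c_b$ depending only on $b$ (this is the standard Fourier/Laplace representation of the function $x\mapsto -x^{b}/(1-b)$ on $[0,\infty)$ when $b\in(0,1)$, and of $x\mapsto x^{b}/(b-1)$ when $b\in(1,2)$; the case $b=0$ and $b=2$ are handled directly since then the bracket is respectively $-(w+z-2s)^{0}+2=1\ge0$ up to constants, and a quadratic that factors). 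Substituting this into \eqref{Qgeneral} and exchanging the order of integration (justified by Tonelli, all integrands being non-negative after the sign is absorbed into $c_b$) gives
\begin{equation*}
Q_{a,b}(w,z)=c_b\int_{0}^{\infty}u^{-1-b}\left(\int_{0}^{w\wedge z}s^{a}\left[1-e^{-u(z-s)}\right]\left[1-e^{-u(w-s)}\right]ds\right)du .
\end{equation*}

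Next I would recognize the inner integral as a genuine covariance. Fix $u>0$ and let $\{B_s\}_{s\ge0}$ be a process with independent increments and variance measure $s^{a}\,ds$ (which makes sense precisely because $a>-1$, so $s^a$ is locally integrable). Define, for each $w\ge0$, the random variable
\begin{equation*}
X_w^{(u)}:=\int_{0}^{w}\left[1-e^{-u(w-s)}\right]dB_s .
\end{equation*}
A direct computation using $\E[dB_s\,dB_{s'}]=\delta(s-s')\,s^{a}\,ds$ shows that $\E\!\left[X_w^{(u)}X_z^{(u)}\right]$ equals the inner integral above. Hence for every finite set $t_1,\dots,t_n\ge0$ and reals $\xi_1,\dots,\xi_n$,
\begin{equation*}
\sum_{i,j}\xi_i\xi_j\,Q_{a,b}(t_i,t_j)=c_b\int_{0}^{\infty}u^{-1-b}\,\E\!\left[\Big(\sum_i\xi_i X_{t_i}^{(u)}\Big)^2\right]du\ \ge\ 0,
\end{equation*}
which is exactly positive definiteness. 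This disposes of case (i).

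For case (ii), with $-1<b<0$ and $a+b+1\ge0$, the representation of $-x^{b}/(1-b)$ as $\tilde c_b\int_0^\infty(1-e^{-ux})u^{-1-b}\,du$ still holds with $\tilde c_b>0$, so the bracket $\big[(z-s)^b+(w-s)^b-(w+z-2s)^b\big]/(1-b)$ is again a non-negative combination of products $[1-e^{-u(z-s)}][1-e^{-u(w-s)}]$; the same argument goes through verbatim provided the double integral converges. The only new point is integrability near $s=w\wedge z$ and near $u=\infty$: since $1-e^{-u(w-s)}\le u(w-s)$, the inner $s$-integral is $O\!\left(u^{2}\int_0^{w\wedge z}s^a(w-s)(z-s)\,ds\right)$ near $u=\infty$ only when $b$ is close to $0$, and the genuine control comes from the scaling $\int_0^{w\wedge z}s^a[1-e^{-u(w-s)}][1-e^{-u(z-s)}]\,ds\le C\,u^{-(a+1)}$ for large $u$ (substitute $s=w\wedge z-v/u$), so the $u$-integral behaves like $\int^\infty u^{-1-b-(a+1)}\,du$, which converges precisely when $a+b+1>0$; the borderline $a+b+1=0$ is recovered by a separate elementary estimate or by monotone approximation from $a'+b+1>0$. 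I expect this integrability bookkeeping in case (ii)—pinning down the exact range $a+b+1\ge0$ and the boundary case—to be the main technical obstacle; case (i) and the algebraic identity are routine once the Laplace representation is in hand.
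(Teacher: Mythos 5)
Your case (i) is correct, and it takes a somewhat different route from the paper: the paper writes $Q_{a,b}(w,z)=\int_0^{w\wedge z}s^a\kappa_b(w-s,z-s)\,ds$ with $\kappa_b(w,z)=\frac{1}{1-b}\bigl(w^b+z^b-(w+z)^b\bigr)$ and simply cites the literature for the positive definiteness of $\kappa_b$ on $b\in(0,1)\cup(1,2)$, handling $b=0,2$ directly, while you reprove that fact from scratch via the Laplace identity $\int_0^\infty\bigl[1-e^{-ua}\bigr]\bigl[1-e^{-uc}\bigr]u^{-1-b}\,du=C_b\bigl(a^b+c^b-(a+c)^b\bigr)$ (with $C_b=\Gamma(1-b)/b$ for $0<b<1$ and the second-order analogue for $1<b<2$), after which Tonelli and the factorization of the integrand in $(w,z)$ give positive definiteness. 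That identity does hold on $(0,1)\cup(1,2)$, the constants have the right sign, and your stochastic-integral phrasing is just the standard product factorization, so (i) is a self-contained alternative to the paper's citation-based step.

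Case (ii), however, has a genuine gap. For $-1<b<0$ the representation you invoke fails: $\int_0^\infty(1-e^{-ux})u^{-1-b}\,du=+\infty$ and likewise $\int_0^\infty[1-e^{-ua}][1-e^{-uc}]u^{-1-b}\,du=+\infty$, because the integrand behaves like $u^{-1-b}$ at infinity and $1+b<1$; the correct representation for negative $b$ uses $e^{-ux}$ rather than $1-e^{-ux}$, which destroys the product structure you rely on. Your integrability rescue also does not work: as $u\to\infty$ the brackets tend to $1$ on all of $[0,w\wedge z]$, so
\begin{equation*}
\int_0^{w\wedge z}s^a\bigl[1-e^{-u(w-s)}\bigr]\bigl[1-e^{-u(z-s)}\bigr]\,ds\;\longrightarrow\;\frac{(w\wedge z)^{a+1}}{a+1}>0,
\end{equation*}
so the claimed bound $Cu^{-(a+1)}$ is false and the $u$-integral diverges for every $a>-1$, irrespective of $a+b+1$. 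There is also a structural reason no ``verbatim'' extension of (i) can succeed on $(-1,0)$: any argument that shows the fixed-$s$ kernel $(w,z)\mapsto\kappa_b(w-s,z-s)$ is positive definite and then integrates over $s$ would yield positive definiteness of $Q_{a,b}$ for \emph{all} $a>-1$, contradicting Lemma \ref{QnotConvariance}(i), which shows $Q_{a,b}$ is not a covariance when $a+b+1<0$; the constraint $a+b+1\geq0$ must enter globally, not pointwise in $s$. This is exactly how the paper proceeds: it splits $Q_{a,b}=\frac{1}{1-b}(Q_1+Q_2)$, where $Q_1(w,z)=\mathcal{B}(a+1,b+1)(w\wedge z)^{a+b+1}$ is positive definite precisely because $a+b+1\geq0$, and $Q_2$ is shown positive definite by writing it as $-b$ (a positive factor) times a triple integral of a product $g_w\cdot g_z$ of indicator-type functions. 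Your monotone-approximation remark for the boundary $a+b+1=0$ would be fine if the interior case held, but as written case (ii) needs a different argument, such as the paper's splitting.
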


	\begin{remark}\label{variaciones}Let us mention several known instances of $Q_{a,b}$ given in (\ref{Qgeneral}):
		\begin{enumerate}
			\item[(a)] Theorem \ref{main1} (i) yields that the function \begin{equation}\label{W-Q} (w,z)\mapsto\int_{0}^{z\wedge w}s^{a}\left[(z-s)^{b}+(w-s)^{b}-(w+z-2s)^{b}\right]ds,\quad w,z\geq0,\end{equation} with $a=\gamma-1$ and $b=2-d/\alpha$, appears as the temporal structure of the covariance function of
			the rescaled occupation time fluctuation limit for a branching particle system in $\Rd$ with $\alpha$-stable motions  and lifetimes having a Pareto tail distribution (\ref{tail}).
			
			\item[(b)] In \cite{BGT-gamma} Bojdecki {\em et al.} investigated the limit fluctuations of a rescaled occupation time process of a branching particle system with particles moving according to $d$-dimensional  $\alpha$-stable motion,  starting with an inhomogeneous Poisson population with intensity measure $dx/(1+|x|^\gamma)$, where $\gamma>0$. In this case, for  $\gamma<d<\alpha$ (hence $d=1$) and normalization $T^{1-(d+\gamma)/2\alpha}$, the limit of the occupation time fluctuations  is  a  Gaussian process whose temporal structure is determined by
			the covariance function
			\begin{eqnarray}\label{W_fractional}
				C_{a,b}(w,z):=\int_{0}^{z\wedge w}s^{a}\left[(z-s)^{b}+(w-s)^{b}\right]ds,\quad w,z\geq0,
			\end{eqnarray}
			for $a=-\gamma/\alpha$ and $b=1-d/\alpha$; see \cite[Thm. 2.2]{BGT-gamma}. Latter on, the same authors proved that the maximal range of values of parameters $a,b$ that makes  (\ref{W_fractional}) a covariance function is $a>-1$, $-1<b\leq 1$ and $|b|\leq 1+a$.  The authors named the  centered Gaussian process with covariance function (\ref{W_fractional}), {\em weighted fractional Brownian motion with parameters $a$ and $b$}, see  \cite{BGTWeighted}. Notice that both, (\ref{W-Q}) and (\ref{W_fractional}), are  weighted covariance kernels, corresponding respectively to weighted sub-fractional Brownian motion, and weighted fractional Brownian motion.

			\item[(c)]  From (\ref{Qgeneral}) it follows that
			\begin{equation}\label{Q_a.cero}
				Q_{0,b}(w,z)=\frac{1}{(b+1)(1-b)}\left(w^{b+1}+z^{b+1}-\frac{1}{2}\left((w+z)^{b+1}+|w-z|^{b+1}\right)\right)
				,\quad w,z\geq0.
			\end{equation}
			Thus, modulus a constant  factor, (\ref{Q_a.cero}) coincides with the covariance function (1.4) in \cite{BGTWeighted}, therefore (\ref{Q_a.cero}) is a covariance function for $-1<b\leq 3$. In particular, for $|b|<1$ it is the covariance function of the sub-fractional Brownian motion.
			
		\end{enumerate}
	\end{remark}
	The next result exhibits a range of  parameters $a$ and $b$ for which the function $Q_{a,b}(\cdot,\cdot)$ is not a covariance function.
	\begin{lemma}\label{QnotConvariance} The function $Q_{a,b}(\cdot,\cdot)$ is not a covariance function in the following cases:
		\begin{itemize}
			\item[(i)]  $a>-1$ and $-1<b<0$, with $a+b+1< 0$;
			\item[(ii)]  $a>-1$ and $b>a+3$.
		\end{itemize}
		
	\end{lemma}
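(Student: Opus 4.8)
The plan is to refute positive definiteness by exhibiting, in each of the two parameter regimes, a finite collection of time points $0<t_1<\dots<t_n$ and real coefficients $c_1,\dots,c_n$ for which the quadratic form $\sum_{i,j}c_ic_j\,Q_{a,b}(t_i,t_j)$ is strictly negative. The most economical route is to take $n=2$ and analyze the $2\times 2$ Gram-type matrix
\[
M=\begin{pmatrix} Q_{a,b}(t_1,t_1) & Q_{a,b}(t_1,t_2)\\ Q_{a,b}(t_1,t_2) & Q_{a,b}(t_2,t_2)\end{pmatrix};
\]
if $M$ fails to be positive semidefinite for some choice of $0<t_1<t_2$, then $Q_{a,b}$ is not a covariance. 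Positive semidefiniteness of $M$ is equivalent to $Q_{a,b}(t_1,t_1)\ge 0$, $Q_{a,b}(t_2,t_2)\ge 0$, and $\det M\ge 0$; violating any one of these suffices. First I would record the diagonal term explicitly: using the substitution $s=t u$ in \Ref{Qgeneral},
\[
Q_{a,b}(t,t)=\frac{t^{a+b+1}}{1-b}\int_0^1 u^a\bigl[2(1-u)^b-2^b(1-u)^b\bigr]\,du
=\frac{2-2^b}{1-b}\,B(a+1,b+1)\,t^{a+b+1},
\]
where $B$ is the Beta function, which is well defined and positive precisely because $a>-1$ and $b>-1$. Note $(2-2^b)/(1-b)>0$ for all $b\ne 1$, $b>-1$, so the diagonal never produces a negative value on its own; hence in both cases the obstruction must come from the off-diagonal, i.e. from the sign of $\det M$, or equivalently from the correlation $\rho(t_1,t_2):=Q_{a,b}(t_1,t_2)/\sqrt{Q_{a,b}(t_1,t_1)Q_{a,b}(t_2,t_2)}$ exceeding $1$ in absolute value.

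For case (i), $-1<b<0$ with $a+b+1<0$, the key observation is the behavior of $Q_{a,b}(t_1,t_2)$ as $t_1\downarrow 0$ with $t_2$ fixed. Since $b<0$, the bracket $(t_2-s)^b+(t_1-s)^b-(t_1+t_2-2s)^b$ is dominated, near $s=t_1$, by the singular term $(t_1-s)^b$, and one finds after the substitution $s=t_1 u$ that
\[
Q_{a,b}(t_1,t_2)=\frac{1}{1-b}\,t_1^{a+b+1}\int_0^1 u^a(1-u)^b\,du\;+\;o\bigl(t_1^{a+b+1}\bigr)
=\frac{B(a+1,b+1)}{1-b}\,t_1^{a+b+1}+o\bigl(t_1^{a+b+1}\bigr)
\]
as $t_1\downarrow 0$, because the contributions of $(t_2-s)^b$ and $(t_1+t_2-2s)^b$ over $s\in[0,t_1]$ are of order $t_1$, which is negligible against $t_1^{a+b+1}$ exactly when $a+b+1<1$ — and here $a+b+1<0<1$. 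Comparing with $\sqrt{Q_{a,b}(t_1,t_1)Q_{a,b}(t_2,t_2)}$, which is of order $t_1^{(a+b+1)/2}$, we get
\[
\rho(t_1,t_2)\sim \text{const}\cdot t_1^{(a+b+1)/2}\;\xrightarrow[t_1\downarrow 0]{}\;+\infty,
\]
since $a+b+1<0$ makes the exponent negative. Thus $\rho(t_1,t_2)>1$ for all sufficiently small $t_1$, so $\det M<0$ and $Q_{a,b}$ is not positive definite. I would then pick $c_1,c_2$ as the appropriate eigenvector-type combination (concretely, $c_1=-Q_{a,b}(t_1,t_2)$, $c_2=Q_{a,b}(t_1,t_1)$, or any pair making the form negative) to make the refutation fully explicit.

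For case (ii), $b>a+3$, the natural route is to look at small-time behavior of the \emph{second difference} structure encoded in $Q_{a,b}$, i.e. to test against the combination $c=(1,-2,1)$ on equally spaced points $t_k=k\varepsilon$, $k=1,2,3$, or more simply to revisit the increment variance: for a genuine covariance one needs $\E(\varsigma(t_2)-\varsigma(t_1))^2=Q_{a,b}(t_2,t_2)-2Q_{a,b}(t_1,t_2)+Q_{a,b}(t_1,t_1)\ge 0$. Expanding this in powers of $h=t_2-t_1$ for small $h$ (with $t_1$ fixed), the leading term comes from differentiating $Q_{a,b}$ twice; since $b>2$ in this regime (as $b>a+3>2$), the integrand $r^a[\,\cdots\,]$ is $C^2$ and one computes that the $O(h^2)$ coefficient of the increment variance is proportional to $b(b-1)/(1-b)=-b<0$ times a positive Beta-type integral, giving a strictly negative increment variance for small $h$ — an immediate contradiction. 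The only subtlety is ensuring the remainder terms in the Taylor expansion are genuinely lower order, which follows from $b>2$ (so all the relevant $(t-r)^b$ factors are twice continuously differentiable at $r=t$) together with $a>-1$ (so the weight $r^a$ is integrable at $0$); the condition $b>a+3$ enters only to guarantee we are in a regime not already covered by Theorem \ref{QgeneralTh}(i). I expect the main obstacle to be pinning down the asymptotic constants cleanly enough to see the sign — particularly managing the cancellation in the bracket $(t-r)^b+(s-r)^b-(t+s-2r)^b$, which vanishes to second order along the diagonal — but this is a finite computation with Beta functions, not a conceptual difficulty.
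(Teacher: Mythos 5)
Your treatment of case (i) is correct and is essentially the paper's own argument: both exploit the failure of the covariance inequality $Q_{a,b}(t_1,t_2)\le\sqrt{Q_{a,b}(t_1,t_1)\,Q_{a,b}(t_2,t_2)}$ as $t_1\downarrow0$, the left side being of exact order $t_1^{a+b+1}$ while the right side is only of order $t_1^{(a+b+1)/2}$ (the paper obtains the lower bound from $(1-u)^b\ge(1+t-2u)^b$ for $b<0$ instead of an asymptotic expansion). One small slip: the discarded contributions of $(t_2-s)^b$ and $(t_1+t_2-2s)^b$ over $s\in[0,t_1]$ are of order $t_1^{a+1}$, not $t_1$, and they are negligible against $t_1^{a+b+1}$ because $b<0$, not because $a+b+1<1$; this does not affect the conclusion.

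Case (ii), however, fails, and the failure is a sign error that cannot be repaired within your strategy. Writing $t=s+h$, the two-point quadratic form with coefficients $(1,-1)$ is, by (\ref{Cov2}),
\[
Q_{a,b}(t,t)-2Q_{a,b}(s,t)+Q_{a,b}(s,s)=\frac{2^{b}-2}{b-1}\int_{s}^{t}u^{a}(t-u)^{b}\,du+\frac{2^{b}}{b-1}\int_{0}^{s}u^{a}\LL[(s-u+h)^{b}+(s-u)^{b}-2\LL(s-u+\tfrac{h}{2}\RR)^{b}\RR]du .
\]
For $b>1$ both terms are nonnegative: the first because $2^{b}>2$ and $b-1>0$, the second because $x\mapsto x^{b}$ is convex, so the bracket is $\ge 0$ and the prefactor $\frac{2^{b}}{b-1}$ is positive. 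A Taylor expansion of the bracket gives the leading term $+\,2^{b-2}\,b\,\mathcal{B}(a+1,b-1)\,s^{a+b-1}\,h^{2}$, i.e.\ the $O(h^{2})$ coefficient is strictly \emph{positive}, not negative: in your computation the convexity deficit $(t-u)^{b}+(s-u)^{b}-\ph$ etc.\ was attached to the overall factor $\frac{1}{1-b}$ of (\ref{Qgeneral}), but when the three evaluations $Q(t,t)$, $Q(s,s)$, $-2Q(s,t)$ are combined it enters with the opposite sign. Hence the increment-variance test can never disprove positive definiteness when $b>1$. There is also the structural warning you noticed yourself: your argument uses only $b>2$, never $b>a+3$. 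But for $a=0$ and $2<b\le 3$ the kernel $Q_{0,b}$ \emph{is} a covariance (Remark \ref{variaciones}(c)), so any proof that ignores the hypothesis $b>a+3$ must be wrong. The paper's proof of (ii) is of a different nature: from the representation $Q_{a,b}(w,z)=b\int_0^{w\wedge z}\int_u^{w\vee z}\int_u^{w\wedge z}u^{a}(r+v-2u)^{b-2}\,dv\,dr\,du$, valid for $b>0$, one gets $Q_{a,b}(1,t)\sim c\,t^{\,b-1}$ as $t\to\infty$, while $\sqrt{Q_{a,b}(1,1)\,Q_{a,b}(t,t)}$ is of order $t^{(a+b+1)/2}$; the hypothesis $b>a+3$ enters precisely to give $b-1>(a+b+1)/2$, so the covariance inequality fails at \emph{large} $t$, not for small increments.
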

	\begin{remark}We were unable to determine  whether (\ref{Qgeneral}) is positive definite for $a>-1$ and $2<b\leq a+3$. This case remains as a challenge for future work.
	\end{remark}
	\begin{definition}\label{W_subfractional} A centered, real-valued Gaussian process $\zeta=\{\zeta_t$, $t\geq0\}$ with covariance function (\ref{Qgeneral}), whose parameters $a$ and $b$ satisfy the conditions given in Theorem \ref{QgeneralTh}, will be called {\em weighted sub-fractional Brownian motion with  parameters $a$ and $b$}.
	\end{definition}
	
	\begin{theorem}\label{limit-properties}
		Let $\zeta$ be the weighted sub-fractional Brownian motion with parameters $a$ and $b$.
		\begin{enumerate}
			\item[(i)]  $\zeta$ is a self-similar process of index $(a+b+1)/2$, i.e. for any  $c>0$,  $$\left(\zeta(ct)\right)_{t\geq 0}\overset{d}{=}\left(c^{{(1+b+a)}/{2}}\zeta(t)\right)_{t\geq 0}.$$
			
			\item[(ii)] (a) Assume that $-1<a\leq 0$, $b\in (0,1)\cup (1,2]$ and $0<a+b+1\leq 2$. {\color{black}For any $M>0$}, there exists a constant $\kappa >0$ such that
			\begin{equation}
				\E\left[(\zeta(t)-\zeta(s))^{2}\right]\leq \kappa|t-s|^b,\quad 0\le s,t<{\color{black}M}, \text{ with } 0\leq |t-s|\leq 1.
			\end{equation}
			In particular,  due to Kolmogorov's continuity theorem,  $\zeta$ possesses a  continuous version  whose paths are a.s. locally-Hölder continuous with index  $\delta$, for any $0<\delta<b/2$.
			
			(b) Assume that $-1<b\leq 0$  and $b+a>0$. There exists a constant $\kappa>0$ such that
			\begin{equation}
				\E\left[(\zeta(t)-\zeta(s))^{2}\right]\leq \kappa |t-s|^{b+1},\quad 0\le s,t<\infty.
			\end{equation}
			In particular,    $\zeta$ possesses a  continuous version  whose paths  are a.s. locally-Hölder continuous with index  $\delta$, for any $0<\delta<(b+1)/2$.

			\item[(iii)] For $0\leq r<v\leq s<t$ there holds
			\begin{eqnarray*}\lefteqn{\mathcal{Q}(r,v,s,t):=
					\E\left[(\zeta(t)-\zeta(s))(\zeta(v)-\zeta(r))\right]}\\ &=&Q_{a,b}(t,v)-Q_{a,b}(t,r)-Q_{a,b}(s,v)+Q_{a,b}(s,r)
				\\ &=&\frac{1}{1-b}\left[\int_{r}^{v}u^{a}\left((t-u)^{b}-(s-u)^{b}\right)du
				+\int_{0}^{r}u^{a}\left((t+r-2u)^{b}-(s+r-2u)^{b}\right)du\right.\\
				&& \, \phantom{MMMMMMMMMMMMMMM}
				-\left.\int_{0}^{v}u^{a}\left((t+v-2u)^{b}-(s+v-2u)^{b}\right)du\right].
			\end{eqnarray*}
			\item[(iv)] (Long-range dependence) For $b\in(0,1)\cup(1,2)$ or for $-1<b\leq 0$ with $a+b+1\geq 0$ and   $0\leq r<v\leq s<t$,
			\begin{equation}\label{LRD-identity}
				\lim_{T\rightarrow \infty}T^{2-b}\mathcal{Q}(r,v,s+T,t+T)=\frac{b}{(a+1)(a+2)}(t-s)(v^{a+2}-r^{a+2}).
			\end{equation}
			\item[(v)] $\zeta$ is not a Markov process.
		\end{enumerate}
	\end{theorem}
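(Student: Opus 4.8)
Since $\zeta$ is a centered Gaussian process with $\zeta(0)=0$ (because $Q_{a,b}(0,0)=0$), parts (i), (iii), (iv) and (v) are assertions about the kernel $Q_{a,b}$ alone, whereas (ii) is an estimate for $\E[(\zeta(t)-\zeta(s))^{2}]=Q_{a,b}(t,t)-2Q_{a,b}(s,t)+Q_{a,b}(s,s)$. The plan for (i) is to substitute $s\mapsto cu$ in (\ref{Qgeneral}): each of the three homogeneous brackets contributes a factor $c^{b}$, the weight $s^{a}$ contributes $c^{a}$, and $ds$ contributes $c$, so $Q_{a,b}(cw,cz)=c^{a+b+1}Q_{a,b}(w,z)$; hence $(\zeta(ct))_{t\ge0}$ and $(c^{(a+b+1)/2}\zeta(t))_{t\ge0}$ are centered Gaussian with the same covariance and therefore equal in law. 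For (iii), bilinearity of the covariance together with $\zeta(0)=0$ gives $\mathcal Q(r,v,s,t)=Q_{a,b}(t,v)-Q_{a,b}(t,r)-Q_{a,b}(s,v)+Q_{a,b}(s,r)$; substituting (\ref{Qgeneral}), using $0\le r<v\le s<t$ to replace every minimum by the appropriate one of $r,v$, and splitting $\int_{0}^{v}=\int_{0}^{r}+\int_{r}^{v}$, the expression collapses to the displayed three-integral identity.

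The heart of the theorem, and the step I expect to be the main obstacle, is the increment estimate (ii). For $s<t$, regrouping $Q_{a,b}(t,t)-2Q_{a,b}(s,t)+Q_{a,b}(s,s)$ yields
\[
\E[(\zeta(t)-\zeta(s))^{2}]=\frac{2-2^{b}}{1-b}\int_{s}^{t}u^{a}(t-u)^{b}\,du+\frac{1}{1-b}\int_{0}^{s}u^{a}\bigl(2(s+t-2u)^{b}-2^{b}(t-u)^{b}-2^{b}(s-u)^{b}\bigr)\,du.
\]
The integrand of the second integral equals $-2^{b}$ times the Jensen gap $(s-u)^{b}+(t-u)^{b}-2\bigl(\tfrac{s+t}{2}-u\bigr)^{b}$ of $x\mapsto x^{b}$, which is $\ge 0$ for $b\in(-1,0]\cup(1,2]$ (where $x^{b}$ is convex) and $\le 0$ for $b\in[0,1]$ (where it is concave). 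Thus in the first range the second integral is $\le 0$ and may be discarded for an upper bound, while for $b\in(0,1)$ it is nonnegative and must be retained; in that case a second–difference (Taylor) estimate bounds the gap by $|b(b-1)|\bigl(\tfrac{t-s}{2}\bigr)^{2}\xi^{b-2}$ with $\xi$ between $s-u$ and $t-u$, and comparing this with the trivial bound $C(t-s)^{b}$ (valid where $s-u\lesssim t-s$) shows the gap is $\le C(t-s)^{b}$ uniformly in $u\in[0,s]$, so the second integral is $\le C(t-s)^{b}\int_{0}^{s}u^{a}\,du$. Together with an estimate of $\int_{s}^{t}u^{a}(t-u)^{b}\,du$ obtained by splitting according to whether $t-s\ge s$ or $t-s<s$ and using $a+1>0$, one arrives at the bounds asserted in (ii)(a) and (ii)(b), and then Kolmogorov's continuity theorem applied to $\E[|\zeta(t)-\zeta(s)|^{2p}]=c_{p}\bigl(\E[(\zeta(t)-\zeta(s))^{2}]\bigr)^{p}$ with $p$ large gives the Hölder exponents $b/2$ and $(b+1)/2$. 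The delicacy lies exactly here: keeping track of the sign of the second integral and extracting precisely the power $|t-s|^{b}$ (resp.\ $|t-s|^{b+1}$) in every sub-case of $(a,b)$.

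For (iv) the plan is to start from the identity of (iii) with $s,t$ replaced by $s+T,t+T$ and to expand each factor $(c+T-u)^{b}=T^{b}+bT^{b-1}(c-u)+\tfrac{b(b-1)}{2}T^{b-2}(c-u)^{2}+O(T^{b-3})$. The $O(T^{b-1})$ contributions of the three integrals are proportional to $\int_{r}^{v}u^{a}\,du+\int_{0}^{r}u^{a}\,du-\int_{0}^{v}u^{a}\,du=0$, so the first surviving order is $T^{b-2}$; collecting these terms one picks up the constant $\tfrac{b(b-1)}{2(1-b)}=-\tfrac{b}{2}$, and the remaining integral — whose integrand, apart from the weight $u^{a}$, reduces after cancellation to $-2(v-u)$ on $[r,v]$ and to $-2(v-r)$ on $[0,r]$, totalling $-2(v^{a+2}-r^{a+2})/[(a+1)(a+2)]$ — produces $\tfrac{b}{(a+1)(a+2)}(t-s)(v^{a+2}-r^{a+2})T^{b-2}$. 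Multiplying by $T^{2-b}$ and letting $T\to\infty$, with the interchange of limit and integral justified by dominated convergence (which for $-1<b\le 0$ needs $a+b+1\ge 0$ to keep the remainder integrable near $u=0$), yields (\ref{LRD-identity}).

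Finally, for (v) I would invoke the classical criterion that a centered Gaussian process $\zeta$ with $\zeta(0)=0$ and $Q_{a,b}(t,t)>0$ for $t>0$ (which holds since $(2-2^{b})/(1-b)>0$ whenever $b\ne 1$) is Markovian if and only if $Q_{a,b}(r,t)\,Q_{a,b}(s,s)=Q_{a,b}(r,s)\,Q_{a,b}(s,t)$ for all $0\le r\le s\le t$. By the self-similarity proved in (i), $Q_{a,b}(w,z)=z^{a+b+1}g(w/z)$ for $0\le w\le z$, where $g(x):=Q_{a,b}(x,1)$ is continuous on $[0,1]$; the Markov identity then becomes the multiplicative equation $g(xy)g(1)=g(x)g(y)$ on $[0,1]$, whose only continuous solutions are $g(x)=g(1)x^{c}$. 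It therefore suffices to check that $g$ is not a pure power, and indeed substituting $u=xw$ in $Q_{a,b}(x,1)$ and expanding as $x\to 0$ exhibits a nonzero term proportional to $x^{a+b+1}$ together with a further nonzero term proportional to $b\,x^{a+2}$, carrying distinct powers because $b\ne 1$; for $b\ne 0$ this contradicts $g(x)=g(1)x^{c}$, so $\zeta$ is not a Markov process.
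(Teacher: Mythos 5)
Your argument for part (ii)(a) breaks down in the range $b\in(1,2]$ because of a sign slip, and this is exactly the case where the real work lies. In your decomposition the second term is $\frac{1}{1-b}\int_0^s u^a\bigl(2(s+t-2u)^b-2^b(t-u)^b-2^b(s-u)^b\bigr)\,du$; for $b>1$ the integrand is indeed nonpositive (convexity of $x\mapsto x^b$, as you say), but the prefactor $\frac{1}{1-b}$ is then negative, so the whole term is \emph{nonnegative} and cannot be discarded for an upper bound. In fact it is the dominant contribution: it equals $\frac{1}{b-1}\int_0^s u^a r_{t,s}(u)\,du$ with $r_{t,s}(u)=2^b(t-u)^b+2^b(s-u)^b-2(t+s-2u)^b\ge0$, and the paper's proof of (ii)(a) is devoted precisely to this term: using concavity of $u\mapsto u^{b-1}$ for $1<b\le 2$ it shows $r_{t,s}$ is nondecreasing on $[0,s]$, hence $r_{t,s}(u)\le r_{t,s}(s)=(2^b-2)(t-s)^b$, giving the bound $\frac{(2^b-2)M^{a+1}}{(b-1)(a+1)}(t-s)^b$; this is also the only place where the constant's dependence on $M$ arises, which your sketch never produces. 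The first term is only $O((t-s)^{a+b+1})$, so after an (invalid) discard you would not have established the asserted $\kappa|t-s|^b$ bound for $b\in(1,2]$. Your Jensen-gap/Taylor bound for $b\in(0,1)$ and the discard for $-1<b\le0$ (where $\frac{1}{1-b}>0$) are fine and close to the paper's argument. A smaller caveat on (ii)(b): after discarding, you still need $\int_s^t u^a(t-u)^b\,du\le C(t-s)^{b+1}$ uniformly in $0\le s<t<\infty$, and your splitting only yields $t^a(t-s)^{b+1}$ up to constants, which is not uniform in $t$ since $a>-b\ge0$ there; the paper's own step has the same weakness, so you are no worse off, but the bound really only comes out with a locally bounded constant.

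The remaining parts are correct and in places more complete than the paper. For (i) you verify the full covariance scaling $Q_{a,b}(cw,cz)=c^{a+b+1}Q_{a,b}(w,z)$, whereas the paper only records the diagonal case; (iii) is the same computation. For (iv) your second-order expansion $(T+x)^b-(T+y)^b=bT^{b-1}(x-y)+\frac{b(b-1)}{2}T^{b-2}(x^2-y^2)+O(T^{b-3})$, with the $T^{b-1}$ terms cancelling and the $T^{b-2}$ terms producing $\frac{b}{(a+1)(a+2)}(t-s)(v^{a+2}-r^{a+2})$, is a legitimate variant of the paper's route (which first shows $T^{1-b}\mathcal{Q}\to0$ and then applies L'H\^opital); your bookkeeping checks out. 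For (v) you actually verify the failure of the Gaussian--Markov factorization, reducing via self-similarity to the multiplicative equation $g(xy)g(1)=g(x)g(y)$ for $g(x)=Q_{a,b}(x,1)$ and ruling out a pure power by the small-$x$ expansion with the two distinct exponents $a+b+1$ and $a+2$; the paper merely asserts the non-factorization, so this is a genuine improvement, though you should note that the argument needs $g>0$ on $(0,1]$ (easily checked in the admissible ranges) and that your exclusion of $b=0$ is forced: for $b=0$ the covariance is $\frac{(w\wedge z)^{a+1}}{a+1}$, a deterministically time-changed Brownian motion, which \emph{is} Markov, so the claim in (v) cannot cover that case.
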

	%
	%
	
	\begin{theorem}\label{limit-Bm} Let $\{\zeta(t)$, $t\ge0\}$ be  the weighted subfractional Brownian motion with parameters $a$ and $b$.
		\begin{enumerate}
			\item[(i)] Let $b\in (1,2].$  {\color{black} The finite dimensional distributions of the processes} $\LL\{T^{-\frac{a+b-1}{2}}(\zeta(t+T)-\zeta(T)),\ t\geq 0\RR\}$ converge, as $T\to\infty$, to those of the process
			$\LL\{2^{b-2}b\mathcal{B}(a+1,b-1) \xi(t),\ t\geq0\RR\}$, where $\LL\{\xi(t),\ t\geq 0\RR\}$ is a weighted fractional  Brownian motion with covariance function $H_{0,1}(s,t)$ given in (\ref{W-fractinal-cov}), and ${\cal B}(x,y)$ is the Beta function.

			\item[(ii)] 
			
			Let $b\in (-1,1)$  with $a+b+1> 0$. {\color{black} The finite dimensional distributions of the processes}  $\LL\{T^{-\frac{a}{2}}(\zeta(t+T)\right.$ $\left.-\!\!\!\!\!\phantom{\int_1^2}\zeta(T)),\ t\geq 0\RR\}$ converge, as $T\to\infty$, to those of 
			$\LL\{\frac{1}{(1-b)(b+1)} X(t),\ t\geq0\RR\}$, where $\LL\{X(t),\ t\geq 0\RR\}$ is a fractional Brownian motion with Hurst parameter $(b+1)/2$.

		\end{enumerate}
	\end{theorem}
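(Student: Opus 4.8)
The plan is to reduce both statements to the convergence of covariance functions and then to analyze the one explicit integral formula available, that of Theorem~\ref{limit-properties}(iii). Since $\zeta$ is centered Gaussian, so are the rescaled, shifted processes $\{T^{-(a+b-1)/2}(\zeta(t+T)-\zeta(T)),\ t\ge0\}$ and $\{T^{-a/2}(\zeta(t+T)-\zeta(T)),\ t\ge0\}$, and $\zeta(T)$ cancels out of every increment; hence convergence of finite--dimensional distributions is equivalent to pointwise convergence of covariances, and by bilinearity it is enough to prove that, for $0\le r\le v\le s\le t$,
\begin{equation*}
T^{-(a+b-1)}\,\mathcal{Q}(r+T,v+T,s+T,t+T)\ \longrightarrow\ 2^{b-2}b\,\mathcal{B}(a+1,b-1)\,(t-s)(v-r)
\end{equation*}
in case (i), and
\begin{equation*}
T^{-a}\,\mathcal{Q}(r+T,v+T,s+T,t+T)\ \longrightarrow\ \frac{1}{(1-b)(b+1)}\cdot\tfrac12\big[(t-r)^{b+1}-(t-v)^{b+1}-(s-r)^{b+1}+(s-v)^{b+1}\big]
\end{equation*}
in case (ii), as $T\to\infty$, where $\mathcal{Q}(r,v,s,t)$ is the increment covariance of Theorem~\ref{limit-properties}(iii). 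Here $(t-s)(v-r)$ is precisely the increment covariance of a Gaussian process with covariance $H_{0,1}(s,t)=st$, and $\tfrac12[(t-r)^{b+1}-(t-v)^{b+1}-(s-r)^{b+1}+(s-v)^{b+1}]$ is that of fractional Brownian motion with Hurst parameter $(b+1)/2$; combined with the two displays this identifies the limit processes. Configurations with coinciding time points then follow by continuity of both sides.

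First I would substitute $r\mapsto r+T,\dots,t\mapsto t+T$ in the formula of Theorem~\ref{limit-properties}(iii), obtaining $\mathcal{Q}(r+T,\dots,t+T)=\frac{1}{1-b}\big(J_1(T)+J_2(T)-J_3(T)\big)$, where, after the shift $u=w+T$ in the first integral, $J_1(T)=\int_r^v(w+T)^a\big((t-w)^b-(s-w)^b\big)\,dw$, $J_2(T)=\int_0^{r+T}u^a\big((t+r+2T-2u)^b-(s+r+2T-2u)^b\big)\,du$, and $J_3(T)$ is $J_2(T)$ with $r$ replaced by $v$ throughout. The term $J_1$ is easy: $(w+T)^a\sim T^a$ uniformly on $[r,v]$, so by dominated convergence $T^{-a}J_1(T)\to\int_r^v\big((t-w)^b-(s-w)^b\big)\,dw=\frac{1}{b+1}\big[(t-r)^{b+1}-(t-v)^{b+1}-(s-r)^{b+1}+(s-v)^{b+1}\big]$; in case (i) this is $O(T^a)=o(T^{a+b-1})$ and drops out, while in case (ii) it is one of the two surviving contributions.

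The core is the asymptotics of $J_2(T)-J_3(T)$. In case (i) ($b\in(1,2]$) I would put $u=Tw$, so the arguments become $2T(1-w)+c$ with $c\in\{t+r,s+r\}$ (resp.\ $\{t+v,s+v\}$), expand $(2T(1-w)+c_1)^b-(2T(1-w)+c_2)^b=b(c_1-c_2)(2T(1-w))^{b-1}+\frac{b(b-1)}{2}(c_1^2-c_2^2)(2T(1-w))^{b-2}+O\big(T^{b-3}(1-w)^{b-3}\big)$, and integrate term by term against $w^a$; the $b>1$ hypothesis makes the resulting Beta integrals $\mathcal{B}(a+1,b)$ and $\mathcal{B}(a+1,b-1)$ finite, and one gets $J_2(T)=b(t-s)2^{b-1}\mathcal{B}(a+1,b)\,T^{a+b}+\frac{b(b-1)}{2}(t-s)(t+s+2r)\,2^{b-2}\mathcal{B}(a+1,b-1)\,T^{a+b-1}+o(T^{a+b-1})$, with the analogous formula for $J_3(T)$. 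The decisive point is that the $T^{a+b}$ coefficient is the \emph{same} for $J_2$ and $J_3$, so it cancels in the difference, leaving $J_2(T)-J_3(T)=-b(b-1)(t-s)(v-r)\,2^{b-2}\mathcal{B}(a+1,b-1)\,T^{a+b-1}+o(T^{a+b-1})$; dividing by $1-b=-(b-1)$ gives the constant in case (i). In case (ii) ($-1<b<1$, the case $b=0$ being trivial since then the integrands vanish identically) I would instead substitute $u=(r+T)-x$, turning $J_2(T)$ into $\int_0^{r+T}(r+T-x)^a f_r(x)\,dx$ with $f_r(x)=(t-r+2x)^b-(s-r+2x)^b$; for $-1<b<0$ the function $f_r$ is integrable on $[0,\infty)$ and $T^{-a}J_2(T)\to\int_0^\infty f_r=\tfrac{1}{2(b+1)}\big[(s-r)^{b+1}-(t-r)^{b+1}\big]$, while for $0<b<1$ one first peels off the non--integrable tail $b(t-s)2^{b-1}x^{b-1}$, whose contribution $b(t-s)2^{b-1}\mathcal{B}(a+1,b)\,(r+T)^{a+b}$ is again endpoint--independent and cancels in $J_2-J_3$, leaving the same $T^a$--asymptotics for the integrable remainder as above. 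Adding the $J_1$ contribution and dividing by $1-b$ gives $\frac{1}{(1-b)(b+1)}$ times the fractional Brownian increment covariance.

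I expect the main obstacle to be making these asymptotic expansions rigorous \emph{near the upper endpoint} of each integral --- the region $w\approx1$, equivalently $u$ close to $r+T$ (resp.\ $v+T$), where $2T(1-w)$ is no longer large so the term--by--term expansion fails, and where, when $a<0$ or $b<0$, the integrand is also singular. The standard remedy is to split each integral at $w=1-\varepsilon$: on $[0,1-\varepsilon]$ the expansion is uniform and yields truncated Beta integrals that converge to $\mathcal{B}(a+1,b)$ and $\mathcal{B}(a+1,b-1)$ as $\varepsilon\downarrow0$; on $[1-\varepsilon,1]$ the contribution is computed directly and one checks that the coefficient of each relevant power of $T$ is, to leading order in $\varepsilon$, independent of which endpoint is involved, so these inner pieces cancel in $J_2-J_3$ up to a quantity that vanishes as $\varepsilon\downarrow0$. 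Since $J_2(T)-J_3(T)$ does not depend on $\varepsilon$ while the coefficients of the pertinent powers of $T$ stabilize as $\varepsilon\downarrow0$, one may then let $T\to\infty$ and afterwards $\varepsilon\downarrow0$ and read off the limit; the hypothesis $a+b+1>0$ intervenes here as well, being exactly the condition under which $\zeta$ (hence the whole problem) is defined when $b<0$. Finally one recognizes the two kernels $(t-s)(v-r)$ and $\tfrac12[(t-r)^{2H}-(t-v)^{2H}-(s-r)^{2H}+(s-v)^{2H}]$, $H=(b+1)/2$, as the increment covariances of $H_{0,1}$ and of fractional Brownian motion, which finishes the identification of the limit processes.
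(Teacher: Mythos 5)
Your core asymptotic analysis is sound: shifting the increment-covariance formula of Theorem \ref{limit-properties}(iii), the terms $J_1,J_2,J_3$ behave as you say, the leading $T^{a+b}$ contributions (case (i)) and the non-integrable tails $b(t-s)2^{b-1}x^{b-1}$ (case (ii), $0<b<1$) do cancel between $J_2$ and $J_3$ up to the relevant order, the endpoint regions can be controlled by the split at $1-\varepsilon$ you describe, and after dividing by $1-b$ one recovers exactly the constants $2^{b-2}b\,\mathcal{B}(a+1,b-1)$ and $\tfrac{1}{(1-b)(b+1)}$ of the statement. This is a genuinely different route from the paper's: the paper never forms the difference of two expansions, but instead computes the actual covariance $\E\bigl[(\zeta(s+T)-\zeta(T))(\zeta(t+T)-\zeta(T))\bigr]$ directly, using for $b>0$ the triple-integral representation $Q_{a,b}(w,z)=b\int_0^{w\wedge z}\int_u^{w\vee z}\int_u^{w\wedge z}u^a(x+y-2u)^{b-2}\,dx\,dy\,du$ and splitting the $u$-integral at $T$ (so that $J_1$ and $J_2$ there are individually convergent after scaling, with no cancellation needed), and a separate elementary computation for $-1<b\le 0$.

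There is, however, a genuine gap in your reduction step. You propose to prove the scaling limits only for $T^{-2c}\mathcal{Q}(r+T,v+T,s+T,t+T)$ with $0\le r\le v\le s\le t$, i.e., for pairs of non-overlapping increments, and to recover the finite-dimensional convergence "by bilinearity" and "by continuity". Bilinearity does not close the argument: the covariance $\E[Y_T(s)Y_T(t)]$ of the rescaled process $Y_T(t)=T^{-c}(\zeta(t+T)-\zeta(T))$ always contains a variance term (e.g.\ $\E[Y_T(s)Y_T(t)]=\E[Y_T(s)^2]+\E[Y_T(s)(Y_T(t)-Y_T(s))]$), and variances correspond to the coinciding configuration $r=s$, $v=t$, which is not in the closure of the ordered region $\{v\le s\}$; so "continuity of both sides" cannot reach it, and in any case pointwise convergence on the ordered region plus continuity of the prelimit does not give convergence at boundary configurations without uniformity. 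The gap is not cosmetic: at $b=0$ in case (ii) the ordered-increment covariances you analyze vanish identically for every $T$ (as you note), so the limits you establish carry no information there, while the theorem's conclusion — convergence to a Brownian-type limit with constant $1$ — is carried entirely by the variance asymptotics. The repair stays within your method (run the same shift-and-expand analysis on the variance formula (\ref{Cov1})--(\ref{Cov2}), or directly on $\E[Y_T(s)Y_T(t)]$ as the paper does), but as written the family of limits you prove does not determine the limiting finite-dimensional distributions.
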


	\section{Laplace functional}\label{LP}
	In this section we will compute the Laplace functional of the occupation time process of $Z$ in a general setting, i.e., we only assume that the branching law is characterized by its probability generating function $h(s)=\sum_{k=0}^\infty p_ks^k$, $|s|\leq1$, and the particle lifetimes by a general distribution function $F$ with support in $[0,\infty).$ The symmetric $\alpha$-stable motion in $\Rd$ will be denoted by $\xi=\{\xi_t,\ t\geq0\}$ and by $\S=\{\S_t,\ t\geq 0\}$ its semigroup.
	
	By definition  $Z_t(A)$ is the number of individuals living in $A\in\mathcal{B}(\Rd)$ at time $t\ge0$,
	where $\mathcal{B}(\Rd)$ denotes the system of Borel set in $\Rd$.
	Let
	$\{S_k,\ k\geq1\}$ be a sequence of i.i.d. random variables
	with common distribution function $F$, and let
	\[
	N_t=\sum_{k=1}^\infty1_{\{W_k\leq t\}}\quad\mbox{and}\quad U(t)=\sum_{n=1}^\infty F^{*n}(t),\quad t\geq0,
	\]
	be the respective renewal process and renewal function,  where the random sequence $\{W_k,\ k\geq0\}$ is defined recursively by
	$$
	W_0=0,\quad W_{k+1}=W_k+S_{k+1},\quad k\geq0.$$
	Define $g(s):=h(1-s)-(1-s)$, $|s|\leq1$. Notice that in the case of critical binary branching $h(s)=s+\frac{1}{2}(1-s)^2$ and  $g(s)=\frac{1}{2}s^2$.
	
	Now, for any nonnegative   $\Psi\in {\cal S}(\R^{d+1})$, we define the function
	\begin{equation}\label{LaplaceV}
		v_\Psi(x,r,t):=\E_x\left[1-e^{-\int_0^t\langle\Psi(\cdot,s+r),Z_s\rangle \,ds}\right],\quad x\in\Rd,\quad r,t\geq0,
	\end{equation}
	where $\E_x$ denotes the expectation operator in a population starting with one particle of age $0$, located at the position $x\in\Rd$.
	\begin{proposition} The function $v_\Psi(x,r,t)$ satisfies the  integral equation
		\begin{equation}\label{LaplaceV1}
			v_\Psi(x,r,t)=\E_x\left[1-e^{-\int_0^t\Psi(\xi_s,r+s)\,ds}\right]
			-\int_0^t\E_x\left[e^{-\int_0^{u}\Psi(\xi_s,r+s)\,ds}g\LL(v_\Psi(\xi_{u},\ph r+u,t-u)\RR) \right]d U(u).
		\end{equation}
	\end{proposition}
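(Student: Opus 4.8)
The plan is to derive first a ``one--step'' integral equation for $v_\Psi$ by conditioning on the death time $S_1$ of the ancestor, and then to resolve the linear part of that equation by iteration, with the renewal function $U$ emerging as the resolvent kernel. Concretely: set $w_\Psi:=1-v_\Psi$ (the Laplace functional of the occupation measure) and let $\xi$ denote the $\alpha$--stable path of the ancestor, which is independent of $S_1$. On $\{S_1>t\}$ the functional $e^{-\int_0^t\langle\Psi(\cdot,s+r),Z_s\rangle\,ds}$ equals $e^{-\int_0^t\Psi(\xi_s,r+s)\,ds}$; on $\{S_1=u\le t\}$ the ancestor follows $\xi$ on $[0,u]$, contributing the weight $e^{-\int_0^u\Psi(\xi_s,r+s)\,ds}$, and is then replaced at $\xi_u$ by $k$ fresh age--$0$ offspring with probability $p_k$, whose occupation subtrees are i.i.d.\ copies governed by $w_\Psi(\xi_u,r+u,t-u)$, so that summation over $k$ produces the generating factor $h(w_\Psi(\xi_u,r+u,t-u))$. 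Using independence of $S_1$ and $\xi$ together with Fubini, I would thus obtain
\[
w_\Psi(x,r,t)=(1-F(t))\,\E_x\!\left[e^{-\int_0^t\Psi(\xi_s,r+s)\,ds}\right]+\int_0^t\E_x\!\left[e^{-\int_0^u\Psi(\xi_s,r+s)\,ds}\,h\!\left(w_\Psi(\xi_u,r+u,t-u)\right)\right]F(du).
\]
Writing $\beta(x,r,t):=\E_x[1-e^{-\int_0^t\Psi(\xi_s,r+s)\,ds}]$ and $(\mathcal{K}\Phi)(x,r,t):=\int_0^t\E_x[e^{-\int_0^u\Psi(\xi_s,r+s)\,ds}\Phi(\xi_u,r+u,t-u)]\,F(du)$, then substituting $w_\Psi=1-v_\Psi$, using $h(1-s)=g(s)+(1-s)$, and rearranging, the equation becomes $v_\Psi=\phi+\mathcal{K}v_\Psi-\mathcal{K}(g\circ v_\Psi)$ with $\phi(x,r,t):=(1-F(t))\beta(x,r,t)+\int_0^t\beta(x,r,u)\,F(du)$.

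Next I would establish two structural identities. First, the Markov property of $\xi$ at successive clock times together with Fubini give, by induction on $n$, that $(\mathcal{K}^n\Phi)(x,r,t)=\int_0^t\E_x[e^{-\int_0^u\Psi(\xi_s,r+s)\,ds}\Phi(\xi_u,r+u,t-u)]\,F^{*n}(du)$; the induction step uses that restarting at time $u_1$ shifts the clock $r\mapsto r+u_1$ and that the image of $F(du_1)\,F^{*n}(du_2)$ under $(u_1,u_2)\mapsto u_1+u_2$ is $F^{*(n+1)}(du)$. Second --- and this is the point at which the awkward source term $\phi$ collapses --- splitting the time integral defining $\beta$ at any $u\le t$ and applying the Markov property of $\xi$ at $u$ gives $\beta(x,r,t)=\beta(x,r,u)+\E_x[e^{-\int_0^u\Psi(\xi_s,r+s)\,ds}\beta(\xi_u,r+u,t-u)]$; integrating this identity against $F(du)$ over $[0,t]$ and rearranging yields precisely $\phi=\beta-\mathcal{K}\beta$.

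With these in hand I would iterate $v_\Psi=\phi+\mathcal{K}v_\Psi-\mathcal{K}(g\circ v_\Psi)$: for every $N\ge0$,
\[
v_\Psi=\sum_{n=0}^{N}\mathcal{K}^n\phi-\sum_{n=1}^{N+1}\mathcal{K}^n(g\circ v_\Psi)+\mathcal{K}^{N+1}v_\Psi .
\]
Since $0\le v_\Psi\le1$ and $g$ is bounded on $[0,1]$, the iterated kernel identity gives $|\mathcal{K}^{N+1}v_\Psi(x,r,t)|\le F^{*(N+1)}(t)\to0$; the telescoping $\sum_{n=0}^{N}\mathcal{K}^n\phi=\sum_{n=0}^{N}\mathcal{K}^n(\beta-\mathcal{K}\beta)=\beta-\mathcal{K}^{N+1}\beta\to\beta$; and $\sum_{n\ge1}\mathcal{K}^n(g\circ v_\Psi)=\int_0^t\E_x[e^{-\int_0^u\Psi(\xi_s,r+s)\,ds}g(v_\Psi(\xi_u,r+u,t-u))]\big(\textstyle\sum_{n\ge1}F^{*n}\big)(du)=\int_0^t\E_x[\,\cdots\,]\,dU(u)$ by the definition of $U$. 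Letting $N\to\infty$ then produces $v_\Psi(x,r,t)=\beta(x,r,t)-\int_0^t\E_x[e^{-\int_0^u\Psi(\xi_s,r+s)\,ds}g(v_\Psi(\xi_u,r+u,t-u))]\,dU(u)$, which is exactly (\ref{LaplaceV1}).

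The hard part will be the bookkeeping and the analytic justifications rather than any single idea: one has to be consistent about the age clock $r\mapsto r+u$ when restarting fresh particles --- both in the branching decomposition and in the definition and iteration of $\mathcal{K}$ --- and one has to justify all the interchanges of summation, Stieltjes integration and expectation, together with the convergence of the Neumann series. The latter rests on $g$ being bounded on $[0,1]$ and on the finiteness and local boundedness of $U(t)=\sum_nF^{*n}(t)=\E[N_t]$; since $F(0)=0<1$ one has $W_n\to\infty$ a.s., so $U$ is finite on compacts by standard renewal theory, and the same fact gives $F^{*n}(t)\to0$, which is what makes the remainder $\mathcal{K}^{N+1}v_\Psi$ vanish in the limit.
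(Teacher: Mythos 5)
Your proposal is correct, and after the common first step it organizes the argument genuinely differently from the paper. Both proofs begin by conditioning on the first lifetime $S_1$: your one-step equation is exactly the paper's first conditioning identity with $S_1$ integrated out against $F$, using its independence from the motion $\xi$. The paper then iterates this decomposition \emph{inside a single expectation}, keeping the renewal epochs $W_1,W_2,\dots$ random; after a lengthy explicit cancellation it reaches $v_\Psi=\E_x\big[\big(1-e^{-\int_0^t\Psi(\xi_s,r+s)\,ds}\big)-\int_0^t e^{-\int_0^u\Psi(\xi_s,r+s)\,ds}\,g(v_\Psi(\xi_u,r+u,t-u))\,dN(u)\big]$, with the remainder controlled by $2P(W_n\le t)\to0$, and only at the very end replaces the counting process by its mean via $U(u)=\E_x[N_u]$. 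You instead integrate out each lifetime immediately, so the iteration becomes a deterministic Neumann series for the convolution operator $\mathcal{K}$: your structural inputs are the iterated-kernel formula $(\mathcal{K}^n\Phi)(x,r,t)=\int_0^t\E_x[e^{-\int_0^u\Psi(\xi_s,r+s)\,ds}\Phi(\xi_u,r+u,t-u)]\,F^{*n}(du)$, the identity $\phi=\beta-\mathcal{K}\beta$ (a direct consequence of splitting $\beta$ at time $u$ by the Markov property), which collapses the source term telescopically, and the definition $U=\sum_{n\ge1}F^{*n}$, which produces $dU$ without ever introducing $N$. The remainder estimate is the same in both proofs, since $F^{*(N+1)}(t)=P(W_{N+1}\le t)\to0$. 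What your route buys is much lighter bookkeeping --- the paper's page-long cancellations are replaced by the single identity $\phi=\beta-\mathcal{K}\beta$, and the exchange of expectation with the random sum over renewal epochs is avoided; what it gives up is the direct branching-tree reading of the intermediate expressions. The interchanges you defer (sum versus Stieltjes integration, convergence of $\sum_{n}\mathcal{K}^n(g\circ v_\Psi)$) are indeed harmless given that $g$ is bounded on $[0,1]$, $0\le v_\Psi\le1$, and $U(t)<\infty$ on compacts, which is the same level of justification the paper itself invokes when passing from $dN$ to $dU$.
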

	\proof  Formula (\ref{LaplaceV1}) obviously holds for $t=0$. Let $t>0$. By conditioning on the first branching time we get,
	$$
	1-v_\Psi(x,r,t)=\E_x\left[e^{-\int_0^t\Psi(\xi_s,r+s)ds}\mathbf{1}_{\{S_1>t\}}\right]
	+\E_x\left[e^{-\int_0^{S_1}\Psi(\xi_s,r+s)ds}h\left(1-v_\Psi\LL(\xi_{S_1},\ph r+S_1,t-S_1\RR) \right)\mathbf{1}_{\{S_1\leq t\}}\right],
	$$
	or equivalently,
	\begin{equation}\label{9f}
		\begin{split}
			v_\Psi(x,r,t)=&\E_x\Bigg[\Bigg(1-e^{-\int_0^t\Psi(\xi_s,r+s)ds}\Bigg)\mathbf{1}_{\{S_1>t\}}+\Bigg(1-e^{-\int_0^{S_1}\Psi(\xi_s,r+s)ds}\Bigg)\mathbf{1}_{\{S_1\leq t\}}\\
			&-e^{-\int_0^{S_1} \Psi(\xi_s,r+s)ds}g(v_\Psi(\xi_{S_1},r+S_1,t-S_1))\mathbf{1}_{\{S_1\leq t\}}\Bigg]\\
			&+\E_x\Bigg[e^{-\int_0^{S_1} \Psi(\xi_s,r+s)ds}v_\Psi(\xi_ {S_1},r+S_1,t-S_1)\mathbf{1}_{\{S_1\leq t\}}\Bigg].
		\end{split}
	\end{equation}
	Next, we consider the event $[S_1\leq t]$ and write $\xi^x=\{\xi^x_s,\ s\ge0\}$ for a symmetric $\alpha$-stable motion starting in $x\in\Rd$. Proceeding as  above with $r$, $t$ and $x$ replaced respectively by  $r+S_1$,  $ t-S_1$ and $\xi_{S_1}$,   and designating $\E_{\xi_{S_1}}(\cdot)$ the expected value starting with a particle at position $\xi_{S_1}$, given the $\sigma-$algebra $\sigma((\xi_{s})_{0\leq s\leq S_1}\cup S_1)$, we obtain
	\begin{eqnarray*}
		\lefteqn{
			v_\Psi(\xi_{S_1}^{x},r+S_1,t-S_1)1_{\{S_1\leq t\}}}
		\\
		&= &
		\E_{\xi_{S_1}}\LL[\LL(1-e^{-\int_{0}^{t-S_1}\Psi\LL(\xi_{u}^{\xi_{S_1}^{x}},r+S_1+u\RR)\,du}\RR)1_{\{W_1\leq t<W_2\}}\RR] \\ &&
		+\E_{\xi_{S_1}}\LL[\LL(1-e^{-\int_{0}^{S_2}\Psi\LL(\xi_{u}^{\xi_{S_1}^{x}},r+S_1+u\RR)\,du}\RR)1_{\{W_2\leq t\}}\RR]\\ &&+
		\E_{\xi_{S_1}} \Bigg[-e^{-\int_0^{S_2}\Psi\LL(\xi_{u}^{\xi_{S_1}^{x}},r+S_1+u\RR)\,du}g\LL(v_\Psi\LL(\xi_{S_2}^{\xi_{S_1}^{x}},\ph r+S_1+S_2,t-S_1-S_2\RR)\RR)1_{\{W_2\leq t\}}\Bigg]\\
		&&+\E_{\xi_{S_1}}\Bigg[e^{-\int_{0}^{S_2}\Psi\LL(\xi_{u}^{\xi_{S_1}^{x}},r+S_1+u\RR)\,du}v_\Psi\LL(\xi_{S_2}^{\xi_{S_1}^{x}},r+S_1+S_2,t-S_1-S_2\RR)1_{\{W_2\leq t\}}\Bigg].
	\end{eqnarray*}
	Hence, by the strong Markov property of $\{\xi_{s},\ s\geq 0\}$,
	\begin{eqnarray} \nonumber
		\lefteqn{\E_x\LL[e^{-\int_0^{S_1}\Psi(\xi_u^{x},r+u)du}v_{\Psi}(\xi_{S_1}^{x},r+S_1,t-S_1)1_{\{S_1\leq t\}}\RR]}\\ \nonumber
		&& \\ \nonumber
		&=&\E_x\LL[-e^{-\int_{0}^{S_1+S_2}\Psi\LL(\xi_{u}^{x},r+u\RR)\,du}g(v_\Psi(\xi_{S_1+S_2}^{x},r+S_1+S_2,t-S_1-S_2))1_{\{W_2\leq t\}}\RR]\\ \nonumber  && \\ \nonumber
		&&+\E_x\LL[\LL(e^{-\int_{0}^{S_1}\Psi(\xi_{u}^{x},r+u)du}-e^{-\int_{0}^{S_1}\Psi(\xi_{u}^{x},r+u)du-\int_{0}^{t-S_1}\Psi(\xi_{S_1+u}^{x},r+S_1+u)du}\RR)1_{\{W_1\leq t<W_2\}}\RR]\\ \nonumber  && \\ \nonumber
		&&+\E_x\LL[\LL(e^{-\int_{0}^{S_1}\Psi(\xi_{u}^{x},r+u)du}-e^{-\int_{0}^{S_1}\Psi(\xi_{u}^{x},r+u)du-\int_{0}^{S_2}\Psi(\xi_{S_1+u}^{x},r+S_1+u)du}\RR)1_{\{W_2\leq t\}}\RR]\\ \nonumber  && \\ \nonumber
		&&+\E_x\LL[e^{-\int_{0}^{S_1}\Psi(\xi_{u}^{x},r+u)du-\int_{0}^{S_2}\Psi(\xi_{S_1+u}^{x},r+S_1+u)du}v_\Psi(\xi_{S_1+S_2}^{x},r+S_1+S_2,t-S_1-S_2)1_{\{W_2\leq t\}}\RR]\\ \nonumber  && \\ \nonumber
		&=&\E_x\LL[-e^{-\int_{0}^{S_1+S_2}\Psi(\xi_{u}^{x},r+u)du}g(v_{\Psi}(\xi_{S_1+S_2}^{x},r+S_1+S_2,t-S_1-S_2))1_{\{W_2\leq t\}}\RR]\\ \nonumber  && \\ \nonumber
		&&+\E_x\LL[\LL(e^{-\int_{0}^{S_1}\Psi(\xi_{u}^{x},r+u)du}-e^{-\int_{0}^{t}\Psi(\xi_{u}^{x},r+u)du}\RR)1_{\{W_1\leq t<W_2\}}\RR]\\ \nonumber  && \\ \nonumber
		&&+\E_x\LL[\LL(e^{-\int_{0}^{S_1}\Psi(\xi_{u}^{x},r+u)du}-e^{-\int_{0}^{S_1+S_2}\Psi(\xi_{u}^{x},r+u)du}\RR)1_{\{W_2\leq t\}}\RR]\\ \nonumber  && \\ \nonumber
		&&+\E_x\LL[e^{-\int_{0}^{S_1+S_2}\Psi(\xi_{u}^{x},r+u)du}v_{\Psi}(\xi_{S_1+S_2}^{x},r+S_1+S_2,t-S_1-S_2)1_{\{W_2\leq t\}}\RR]\\ \nonumber  && \\ \nonumber
		&=&\E_x\LL[-e^{-\int_{0}^{S_1+S_2}\Psi(\xi_{u}^{x},r+u)du}g(v_{\Psi}(\xi_{S_1+S_2}^{x},r+S_1+S_2,t-S_1-S_2))1_{\{W_2\leq t\}}\RR]\\ \nonumber  && \\ \nonumber
		&&+\E_x\LL[\LL(1-e^{-\int_{0}^{t}\Psi(\xi_{u}^{x},r+u)du}\RR)1_{\{W_1\leq t<W_2\}}\RR]\\ \nonumber  && \\ \nonumber
		&&+\E_x\LL[\LL(1-e^{-\int_{0}^{S_1+S_2}\Psi(\xi_{u}^{x},r+u)du}\RR)1_{\{W_2\leq t\}}\RR]\\ \nonumber   && \\ \nonumber
		&&+\E_x\LL[\LL(e^{-\int_{0}^{S_1}\Psi(\xi_{u}^{x},r+u)du}-1\RR)1_{\{W_1\leq t\}}\RR]  \\  \nonumber && \\  \label{p2}
		&&+\E_x\LL[e^{-\int_{0}^{S_1+S_2}\Psi(\xi_{u}^{x},r+u)du}v_{\Psi}(\xi_{S_1+S_2}^{x},r+S_1+S_2,t-S_1-S_2)1_{\{W_2\leq t\}}\RR].
	\end{eqnarray}
	Plugging (\ref{p2}) into (\ref{9f}) we get
	\begin{equation}\label{p1}
		\begin{split}
			v_{\Psi}(x,r,t)&=\E_x\Bigg[-e^{\int_{0}^{S_1}\Psi(\xi_u^{x},r+u)du})g(v_{\Psi}(\xi_{S_1}^{x},r+S_1,t-S_1))1_{\{W_1\leq t\}}\Bigg]\\ &+\E_{x}\Bigg[\Bigg(1-e^{-\int_0^{S_1}\Psi(\xi_u^{x},r+u)du}\Bigg)1_{\{W_1\leq t\}}\Bigg]\\
			&+\E_x\Bigg[\Bigg(1-e^{-\int_0^{t}\Psi(\xi_u^{x},r+u)du}\Bigg)1_{\{W_1> t\}}\Bigg]\\
			&+\E_x\Bigg[-e^{-\int_{0}^{S_1+S_2}\Psi(\xi_{u}^{x},r+u)du}g(v_{\Psi}(\xi_{S_1+S_2}^{x},r+S_1+S_2,t-S_1-S_2))1_{\{W_2\leq t\}}\Bigg]\\
			&+\E_x\Bigg[\Bigg(1-e^{-\int_{0}^{t}\Psi(\xi_{u}^{x},r+u)du}\Bigg)1_{\{W_1\leq t<W_2\}}\Bigg]\\
			&+\E_x\Bigg[\Bigg(1-e^{-\int_{0}^{S_1+S_2}\Psi(\xi_{u}^{x},r+u)du}\Bigg)1_{\{W_2\leq t\}}\Bigg]\\
			&+\E_x\Bigg[\Bigg(e^{-\int_{0}^{S_1}\Psi(\xi_{u}^{x},r+u)du}-1\Bigg)1_{\{W_1\leq t\}}\Bigg]\\
			&+\E_x\Bigg[e^{-\int_{0}^{S_1+S_2}\Psi(\xi_{u}^{x},r+u)du}v_\Psi(\xi_{S_1+S_2}^{x},r+S_1+S_2,t-S_1-S_2)1_{\{W_2\leq t\}}\Bigg]\\
			&=\E_{x}\Bigg[\Bigg(1-e^{-\int_0^{t}\Psi(\xi_u^{x},r+u)du}\Bigg)\Bigg(1_{\{W_1> t\}}+1_{\{W_1\leq t<W_2\}}\Bigg)\Bigg]\\
			&+\E_x\Bigg[-e^{\int_{0}^{S_1}\Psi(\xi_u^{x},r+u)du})g(v_{\Psi}(\xi_{S_1}^{x},r+S_1,t-S_1))1_{\{W_1\leq t\}}\Bigg]\\
			&+\E_x\Bigg[-e^{-\int_{0}^{S_1+S_2}\Psi(\xi_{u}^{x},r+u)du}g(v_{\Psi}(\xi_{S_1+S_2}^{x},r+S_1+S_2,t-S_1-S_2))1_{\{W_2\leq t\}}\Bigg]\\
			&+\E_x\Bigg[\Bigg(1-e^{-\int_{0}^{S_1+S_2}\Psi(\xi_{u}^{x},r+u)du}\Bigg)1_{\{W_2\leq t\}}\Bigg]\\
			&+\E_x\Bigg[e^{-\int_{0}^{S_1+S_2}\Psi(\xi_{u}^{x},r+u)du}v_\Psi(\xi_{S_1+S_2}^{x},r+S_1+S_2,t-S_1-S_2)1_{\{W_2\leq t\}}\Bigg]\\
			&=\E_x\Bigg[\Bigg(1-e^{-\int_0^t\Psi(\xi_s,r+s)ds}\Bigg)\sum_{i=1}^2 \Bigg(\mathbf{1}_{\{W_{i-1}\leq t<W_i\}}\Bigg)\\
			&-\sum_{i=1}^2 e^{-\int_0^{W_i}\Psi(\xi_s,r+s)ds}g(v_\Psi(\xi_{W_i},r+W_i,t-W_i))\mathbf{1}_{\{W_i\leq t\}}\Bigg]\\
			&+\E_x\Bigg[\Bigg(1-e^{-\int_{0}^{W_2}\Psi(\xi_{u}^{x},r+u)du}\Bigg)1_{\{W_2\leq t\}}\Bigg]\\
			&+\E_x\Bigg[e^{-\int_{0}^{W_2}\Psi(\xi_{u}^{x},r+u)du}v_\Psi(\xi_{W_2}^{x},r+W_2,t-W_2)1_{\{W_2\leq t\}}\Bigg].
		\end{split}
	\end{equation}
	By an iterative procedure and using that
	$$\E_x\Bigg[e^{-\int_0^{W_n}\Psi(\xi_s,r+s)\,ds}v_\Psi(\xi_{W_n},r+W_n,t-W_n)\mathbf{1}_{\{W_n\leq t\}}
	+\Bigg(1-e^{-\int_0^{W_n}\Psi(\xi_s,r+s)ds}\Bigg)\mathbf{1}_{\{W_n\leq t\}}\Bigg]\leq 2P(W_n\leq t)\to 0
	$$
	as $n\to\infty$ for all $t>0$, we get
	\begin{eqnarray*}\lefteqn{
			v_\Psi(x,r,t)}\\&=&\E_x\left[\left(1-e^{-\int_0^t\Psi(\xi_s,r+s)\,ds}\right)\sum_{i=1}^\infty\mathbf{1}_{\{W_{i-1}\leq t<W_i\}}
		-\sum_{i=1}^\infty e^{-\int_0^{W_i}\Psi(\xi_s,r+s)\,ds}g(v_\Psi(\xi_{W_i},r+W_i,t-W_i))\mathbf{1}_{\{W_i\leq t\}}\right]
		\\&=&\E_x\left[\left(1-e^{-\int_0^t\Psi(\xi_s,r+s)\,ds}\right)
		-\int_{0}^{t} e^{-\int_0^{u}\Psi(\xi_s,r+s)\,ds}g(v_\Psi(\xi_{u},r+u,t-u))dN(u)\right],
	\end{eqnarray*}
	which is equivalent to
	\begin{eqnarray*}
		v_\Psi(x,r,t)&=&\E_x\left[1-e^{-\int_0^t\Psi(\xi_s,r+s)\,ds}-\int_0^te^{-\int_0^{u}\Psi(\xi_s,r+s)\,ds}g(v_\Psi(\xi_{u},r+u,t-u)) \,d U(u)\right],
	\end{eqnarray*}
	where $U(u)=\E_x[N_u]$, $u\geq0$.\hfill$\Box$
	
	\begin{remark} In the case of critical binary branching and exponential lifetimes with rate $V>0$, i.e., $g(s)=\frac{1}{2}s^2$ and $dU(u)=V\,du$, equation (\ref{LaplaceV1}) reduces to
		$$
		v_\Psi(x,r,t)=\nonumber\E_x\left[1-e^{-\int_0^t\Psi(\xi_s,r+s)\,ds}\right]
		-V\int_0^t\E_x\left[e^{-\int_0^{t-u}\Psi(\xi_s,r+s)\,ds}\left(\frac{1}{2}(v_\Psi(\xi_{u},r+t-u,u))^2 \right)\right]du,
		$$
		hence, from the Feynman-Kac formula we get
		\begin{eqnarray*}
			\frac{\partial}{\partial t}v_\Psi(x,r,t)&=&\left(\Delta_\alpha+\frac{\partial}{\partial r}\right)v_\Psi(x,r,t)+\Psi(x,r)(1-v_\Psi(x,r,t))-\frac{V}{2}\LL(v_\Psi(x,r,t)\RR)^2\\v_\Psi(x,r,0)&=&0,
		\end{eqnarray*}
		which is equation (3.20) in \cite{BGT1}.
	\end{remark}
	For any $r\in\R$ we set
	\begin{equation}\label{h_definition}
		f (x,r,t):=\E_x\left[e^{-\int_{0}^{t}\Psi(\xi_{u}^{x},r+u)\,du}\right],\quad x\in\R^d,\quad t\ge0.
	\end{equation}
	It follows from the  Feynman-Kac formula that $f$ solves in mild sense the partial differential equation
	$$
	\frac{\partial f}{\partial t}(x,r,t)
	=\left(\Delta_\alpha+\frac{\partial}{\partial r}\right)f(x,r,t)-\Psi(x,r)f(x,r,t)
	$$
	with initial value  $f(x,r,0)=1$, i.e.
	\begin{eqnarray}\label{h1}
		f(x,r,t)
		&=&1-\int_{0}^{t}\S_{u}\left[\Psi(\cdot,r+u)f(\cdot,r+u,t-u)\right](x)\,du.
	\end{eqnarray}
	We finish this section with the following result, which will be useful to prove convergence of the finite-dimensional distributions in Theorem  \ref{main1} and Theorem \ref{main}.
	\begin{lemma}\label{v_charactarization}Assume that $U$ is  absolutely continuous with density  function  ${\cal U}.$ The function $v_\Psi(x,r,t)$ in (\ref{LaplaceV1}) can be written as
		\begin{eqnarray}
			\lefteqn{v_\Psi(x,r,t)}\nonumber\\ &=&\nonumber\int_{0}^{t}\S_u\left[\Psi(\cdot,r+u)f(\cdot,r+u,t-u)\right](x)du-\int_0^{t}\S_u g(v_\Psi(\cdot,r+u,t-u))(x)\,dU(u)\\
			&&\nonumber +\int_{0}^{t}\int_{0}^{t-z}\S_z\left[\Psi(\cdot,r+z)\E_\cdot\left(e^{-\int_{0}^{u}\Psi(\xi_s^{\cdot},r+z+s)ds}g(v_\Psi(\xi_{u}^{\cdot},r+u+z,t-z-u))\right)\right](x)\,\mathcal{U}(u+z)\,du\,dz.
		\end{eqnarray}
	\end{lemma}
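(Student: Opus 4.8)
The plan is to derive the claimed representation directly from the integral equation \Ref{LaplaceV1}, rewriting its two summands separately. For the first summand, observe that $\E_x\!\left[1-e^{-\int_0^t\Psi(\xi_s,r+s)\,ds}\right]=1-f(x,r,t)$ with $f$ as in \Ref{h_definition}; then \Ref{h1} immediately gives $1-f(x,r,t)=\int_0^t\S_u[\Psi(\cdot,r+u)f(\cdot,r+u,t-u)](x)\,du$, which is exactly the first term in the statement. So the whole content of the lemma lies in rewriting the second summand of \Ref{LaplaceV1}.

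For that I would expand the Feynman--Kac exponential by a Duhamel-type argument, in the same spirit as the derivation of \Ref{h1}. Fix $w\in(0,t]$ and treat $y\mapsto g(v_\Psi(y,r+w,t-w))$ as a bounded function of the spatial endpoint alone. Starting from the elementary identity $e^{-\int_0^w\Psi(\xi_s,r+s)\,ds}=1-\int_0^w\Psi(\xi_z,r+z)\,e^{-\int_z^w\Psi(\xi_s,r+s)\,ds}\,dz$, multiply by $g(v_\Psi(\xi_w,r+w,t-w))$ and take $\E_x$. The constant part produces $\S_w[g(v_\Psi(\cdot,r+w,t-w))](x)$. For the remaining part, conditioning at time $z$ and using the Markov property of $\xi$ (with the substitution $s=z+s'$ in the residual time integral of $\Psi$) turns the integrand into $\S_z\LL[\Psi(\cdot,r+z)\,\E_\cdot\LL(e^{-\int_0^{w-z}\Psi(\xi^\cdot_{s'},r+z+s')\,ds'}g(v_\Psi(\xi^\cdot_{w-z},r+w,t-w))\RR)\RR](x)$, integrated over $z\in[0,w]$. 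Integrating this identity against $dU(w)$, using the hypothesis that $U$ is absolutely continuous to write $dU(w)=\mathcal{U}(w)\,dw$, and then changing variables $u=w-z$ in the ensuing double integral --- so that $\{0\le z\le w\le t\}$ becomes $\{0\le z\le t,\ 0\le u\le t-z\}$, $w=u+z$, $r+w=r+z+u$, $t-w=t-z-u$ --- gives exactly the two remaining summands of the statement, with signs matching \Ref{LaplaceV1}.

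Finally I would check that every interchange of integrals is justified: since $v_\Psi\in[0,1]$ the composition $g(v_\Psi)$ is bounded, $\Psi\in\ss$ is bounded, and $U(t)<\infty$ for every finite $t$, so Fubini's theorem applies to each double and triple integral appearing above. I expect the only delicate point to be the bookkeeping in the Markov step --- one must keep the terminal time $w$ frozen so that $g(v_\Psi(\cdot,r+w,t-w))$ is genuinely a function of the endpoint --- together with correctly tracking the shifted time arguments of $\Psi$ and $v_\Psi$ through the change of variables $u=w-z$; once the parameters $r,w,z,u,t$ are tracked carefully the rest is routine. \hfill$\Box$
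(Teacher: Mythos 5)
Your proposal is correct and follows essentially the same route as the paper: the paper also splits \Ref{LaplaceV1} into the $f$-part, rewritten via \Ref{h1}, and the renewal integral of $k(x,r,\sigma)=\E_x\bigl[e^{-\int_0^{\sigma}\Psi(\xi_u,r+u)\,du}\,g\bigl(v_\Psi(\xi_{\sigma},r+\sigma,s)\bigr)\bigr]$, expands the latter, and finishes with the same Tonelli/change-of-variables bookkeeping ($u=w-z$, $\mathcal U(u+z)$). The only difference is cosmetic: where you expand the exponential by the elementary Duhamel identity plus the Markov property of $\xi$, the paper invokes the Feynman--Kac formula to write the mild equation \Ref{k3} for $k$ and then re-expands $k$ by its definition --- the two computations are the same and yield identical terms.
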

	\proof
	Let us define,  for some fixed $s\in\R_+$ ,
	\begin{equation}\label{k2}
		k(x,r,\sigma):=\E_x\Bigg[e^{-\int_0^{\sigma}\Psi(\xi_{u}^{x},r+u)\,du}g\LL(v_\Psi(\xi_{\sigma}^{x},r+\sigma,s)\RR)\Bigg].
	\end{equation}
	Notice that $k(x,r,\sigma)$ also depends on the fixed parameter $s$ but we omit such dependency.
	Using again the  Feynman-Kac formula we have
	$$
	\frac{\partial }{\partial \sigma }k(x,r,\sigma)
	=\left(\Delta_{\alpha}+\frac{\partial}{\partial r}\right)k(x,r,\sigma)-\Psi(x,r)k(x,r,\sigma),
	$$
	or
	\begin{equation}\label{k3}
		k(x,r,\sigma)=\S_{\sigma}g(v_\Psi(\cdot,r+\sigma,s))(x)-\int_{0}^{\sigma}\S_{\sigma-w}\left[\Psi(\cdot,r+\sigma-w)k(\cdot,r+\sigma-w,w)\right](x)\,dw.
	\end{equation}
	Due to (\ref{h_definition}) and (\ref{k2}), equation (\ref{LaplaceV1}) can be expressed as
	\begin{equation}\label{LaplaceV2}
		v_\Psi(x,r,t)=1-f(x,r,t)-\int_0^{t}k(x,r,t-v)\,\mathcal{U}(t-v)\,dv.
	\end{equation}
	From (\ref{h1}) and (\ref{k3}) we obtain
	\begin{eqnarray}\label{vaux}
		v_\Psi(x,r,t)\nonumber &=&\int_{0}^{t}\S_u\left[\Psi(\cdot,r+u)f(\cdot,r+u,t-u)\right](x)\,du
		-\int_0^{t}\S_{t-v}g(v_\Psi(\cdot,r+t-v,v))(x)\,\mathcal{U}(t-v)\,dv\nonumber\\
		& &+\int_0^t\int_{0}^{t-v}\S_{t-v-w}\left[\Psi(\cdot,r+t-v-w)k(\cdot,r+t-v-w,w)\right](x)\,dw\, \mathcal{U}(t-v)\,dv
	\end{eqnarray}
	with
	\begin{equation}\label{KKK}
		k(x,r+t-v-w,w)=\E_x\left[e^{-\int_{0}^{w}\Psi(\xi_u^{x},r+t-v-w+u)\,du}g(v_\Psi(\xi_{w}^{x},r+t-v-w+w,
		v)\right], \quad x\in\R^d.
	\end{equation}
	Using (\ref{KKK}) and making the change of variables $z=t-v-w$, the double integral in  (\ref{vaux}) transforms into
	$$\int_{0}^{t}\int_{0}^{t-v}\S_{z}\left[\Psi(\cdot,r+z)\E_\cdot\left(e^{-\int_{0}^{t-v-z}\Psi(\xi_u^{\cdot},r+z+u)\,du}g(v_{\Psi}(\xi_{t-v-z}^{\cdot},r+t-v,v)\right)\right](x)\,dz\,\mathcal{U}(t-v)\,dv.
	$$
	Then, firstly applying Tonelli's Theorem and then making the change of variables $u=t-z-v$, in the  double integral in  (\ref{vaux}) we get
	\begin{eqnarray*}
		&& \int_{0}^{t}\int_{0}^{t-v}\S_{z}\left[\Psi(\cdot,r+z)\E_\cdot\left(e^{-\int_{0}^{t-v-z}\Psi(\xi_u^{\cdot},r+z+u)\,du}g(v_{\Psi}(\xi_{t-v-z}^{\cdot},r+t-v,v))\right)\right](x)\,dz\,\mathcal{U}(t-v)\,dv
		\\&=& \int_{0}^{t}\int_{0}^{t-z}\S_{z}\left[\Psi(\cdot,r+z)\E_\cdot\left(e^{-\int_{0}^{t-v-z}\Psi(\xi_u^{\cdot},r+z+u)\,du}g(v_{\Psi}(\xi_{t-v-z}^{\cdot},r+t-v,v))\right)\right](x)\,\mathcal{U}(t-v)\,dv\,dz
		\\&=&\int_{0}^{t}\int_{0}^{t-z}\S_{z}\left[\Psi(\cdot,r+z)\E_\cdot\left(e^{-\int_{0}^{u}\Psi(\xi_s^{\cdot},r+z+s)ds}g(v_\Psi(\xi_{u}^{\cdot},r+u+z,t-z-u))\right)\right](x)\,\mathcal{U}(u+z)\,du\,dz.
	\end{eqnarray*}
	Finally,  plugging the last identity  into (\ref{vaux}) we conclude the proof.\hfill$\Box$

	\section{Proofs of main results}\label{PROOFS}

	As we mentioned in the first section, our proof of Theorem \ref{main1} and Theorem \ref{main} will relay on the space-time random field method developed in \cite{BGR} and applied in \cite{BGT1} to treat the Markovian case. Briefly described, the  space-time random field method consists in the following. Let $\Upsilon>0$.  For every stochastic process  $X\equiv\{X(t),\ t\geq0\}$  with paths in the Skorokhod space $D([0,\Upsilon],\s)$ of c\`adl\`ag functions $\omega:[0,\Upsilon]\to \s$ let $\tilde{X}$ be the random element of ${\cal S}^\prime(\R^{d+1})$   defined by
	\[\langle \tilde{\Phi},\tilde{X}\rangle=\int_0^{\Upsilon}\langle\tilde{\Phi}(\cdot,s),X(s)\rangle\, ds,\quad\tilde{\Phi}\in {\cal S}(\R^{d+1}).\]
	If $X$ is a.s. continuous at $\Upsilon$, then the law of $\tilde{X}$ determines that of $X$. Moreover,  if a family $\{X_T,\ T\geq1\}$ of  $\s$-valued processes with paths in $C([0,\Upsilon],\s)$ is tight, and $\tilde{X}_T$ converges in distribution in $S^\prime(\R^{d+1})$ as $T\rightarrow\infty$, then $X_T\Rightarrow X$ in $C([0,\Upsilon],\s)$ as $T\rightarrow\infty$ for some $\s$-valued process $X$. Without loss of generality, in the sequel  we will assume $\Upsilon =1$.

	\subsection{Proof of theorems \ref{main1} and \ref{main}}
	\subsubsection{Tightness}
	
	We start by proving that  the sequence $\{\J_T,\ T\geq M_{\alpha,d,\gamma}\}$ is tight, for some constant $M_{\alpha,d,\gamma}>0$. Recall that for $0\leq s\leq t$ and $\psi,\varphi\in {\cal S}(\R^d)$,
	\begin{equation}\label{Covar}
		\mbox{Cov}(\l\varphi,Z(s)\r,\l\psi,Z(t)\r)=\langle\varphi\S_{t-s}\psi,\lambda\rangle+\int_0^s\int_{\mathbb{R}^d}(\S_{s-r}\varphi)(x)(\S_{t-r}\psi)(x)\,dx\,dU(r);
	\end{equation}
	see  \cite{LM-MS}. Let
	$\hat{\varphi}$ be the
	Fourier transform
	$
	\hat{\varphi}(x)=\int_{\mathbb{R}^d}e^{ix\cdot y}\varphi(y)\,dy,
	$ $x\in\Rd$,
	where $x\cdot y$ denotes the inner product in $\mathbb{R}^d$.
	Using (\ref{Covar}),  Plancherel's formula 
	and the identity $\widehat{\S_t\varphi}(x)=e^{-t|x|^\alpha}\hat{\varphi}(x)$,  we deduce that
	\begin{equation}\label{CovarFourier}
		\mbox{Cov}\left(\langle\varphi,Z(s)\rangle\langle\psi,Z(t)\rangle\right)=\frac{1}{(2\pi)^d}\int_{\mathbb{R}^d}\hat{\varphi}(y)\overline{\hat{\psi}(y)}\left[e^{-(t-s)|y|^\alpha}+\int_0^se^{-(t+s-2r)|y|^\alpha}
		dU(r)\right]dy.
	\end{equation}
	Due to (\ref{CovarFourier}),
	for any $\psi\in\ss$,
	\begin{equation}\label{OccupVar}
		\E\LL[\l\psi,\J_T(t)\r-\l\psi,\J_T(s)\r\RR]^2=\frac{T^2}{H_T^2}\int_s^t\int_s^t\mbox{Cov}(\l\psi,Z(Tu)\r,\l\psi,Z(Tv)\r)\,du\,dv \ = \ I + II
	\end{equation}
	where
	\[I=2\frac{ T^{d/\alpha-\gamma}}{(2\pi)^d}\int_s^t\int_s^v\int_{\Rd}|\hat{\psi}(y)|^2e^{-T(v-u)|y|^\alpha}\,dy\,du\,dv\]
	and
	\begin{eqnarray*}
		II&=&2\frac{T^{d/\alpha-\gamma}}{(2\pi)^d}\int_s^t\int_s^v\int_{\Rd}|\hat{\psi}(y)|^2\int_0^ue^{-(Tv+Tu-2r)|y|^\alpha}dU(r)\,dy\,du\,dv\\
		&=&2\frac{T^{d/\alpha-\gamma}}{(2\pi)^d}\int_s^t\int_s^v\int_{\Rd}|\hat{\psi}(y)|^2\int_0^ue^{-T(v+u-2r)|y|^\alpha}dU(Tr)\,dy\,du\,dv.
	\end{eqnarray*}
	We first deal with the term $I$. For any  $s,t\in[0,1]$ with $s\leq t$,
	\begin{equation*}
		\begin{split}
			\int_{s}^{t}\int_{s}^{v}e^{-T(v-u)|y|^{\alpha}}\,du\,dv&=
			\frac{1}{T|y|^{\alpha}}\int_{s}^{t}(1-e^{-T|y|^{\alpha}(v-s)})\,dv \ = \ \frac{1}{T|y|^{\alpha}}\int_{0}^{t-s}(1-e^{-Tv|y|^{\alpha}})\,dv\\
			&\leq \frac{1}{T|y|^{\alpha}}\int_{0}^{t-s}(T|y|^{\alpha}v)^{\delta}\,dv \
			= \ \frac{T^{\delta-1}}{\delta+1}\frac{1}{|y|^{\alpha(1-\delta)}}(t-s)^{\delta+1},
		\end{split}
	\end{equation*}
	where the inequality above follows from the relation $1-e^{-x}\leq x^{\delta}$, valid for $x>0$ and  $0<\delta\leq 1$. Since by assumption  $\alpha\gamma<d<\alpha(1+\gamma)$,  choosing  $\delta=1+\gamma -d/\alpha$ we get $\delta\in(0,1]$ and
	\begin{equation}\label{I-stimation}
		I\leq \frac{2}{(2\pi)^d h}\int_{\R^d}\frac{|\hat{\psi}(y)|^2}{|y|^{d-\alpha\gamma}}\, dy \times (t-s)^{h},\quad \mbox{with $h=2+\gamma-d/\alpha$,}
	\end{equation}
	and the last integral is finite because $d>\alpha\gamma$ and $\psi\in\mathcal{S}(\R^d)$.

	\begin{remark}\label{remark_revision}
		Notice that assumption (\ref{tail}) implies the equivalence
		$U(T)\sim T^{\gamma}/\Gamma(1+\gamma)$ as $T\to\infty$ (see e.g. \cite[(8.6.3)]{B.et.al.}  or \cite[Thm. 2.2.2]{Anderson}), which in turn entails
		$$
		\frac{U(Tr)}{U(T)}\to r^{\gamma}\quad\mbox{as}\quad T\to\infty\quad
		\mbox{for any $r\in[0,1]$.}
		$$
		Moreover, $\int_{0}^{1}\frac{dU(Ts)}{U(T)}=1$ and for all $0\leq a< b\leq 1$,
		$$\int_{a}^{b}\frac{dU(Ts)}{U(T)}=\frac{U(Tb)-U(Ta)}{U(T)}\to b^{\gamma}-a^{\gamma}=\int_{a}^{b}\gamma s^{\gamma-1}\,ds\quad\mbox{as $T\to\infty$},$$
		hence the measure  $\hat{U}_T$ defined on  $([0,1],\mathcal{B}([0,1]))$  by
		\begin{equation}\label{Renewallimit}
			\hat{U}_{T}([0,u])=\int_{0}^{u}\frac{dU(Ts)}{U(T)}
		\end{equation}
		weakly converges to the measure on  $([0,1],\mathcal{B}([0,1]))$ having density function $\gamma s ^{\gamma-1}\mathbf{1}_{(0,1)}(s).$
	\end{remark}
	Before working the second term $II$, we prepare with a lemma.
	
	\begin{lemma}\label{lemma_1_revision}
		Let $s,t\in[0,1]$ with $s\le t$.
		There exist constants $C>0$ and $M_{\alpha,d,\gamma}>0$ such that for all $T>M_{\alpha,d,\gamma}$,
		$$
		\int_s^t\int_s^v\int_0^u
		(v+u-2r)^{-d/\alpha}\,d\frac{U(Tr)}{U(T)}\,du\,dv
		\le
		C
		\int_s^t\int_s^v\int_0^u(v+u-2r)^{-d/\alpha}r^{\gamma-1}dr\,du\,dv.
		$$
	\end{lemma}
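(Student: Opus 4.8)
The plan is to reduce the triple‑integral inequality to a pointwise comparison between the measure $d\hat U_T(r)=dU(Tr)/U(T)$ and the limiting density $\gamma r^{\gamma-1}\,dr$, taking advantage of the fact that the kernel $(v+u-2r)^{-d/\alpha}$ is, for fixed $u,v$ with $r\le u\le v$, a nonnegative increasing function of $r\in[0,u]$. First I would fix $s\le t$ in $[0,1]$ and, for each pair $u\le v$, write the inner integral $\int_0^u (v+u-2r)^{-d/\alpha}\,d\hat U_T(r)$; integrating by parts (Fubini on $[0,u]\times[0,u]$, using that $r\mapsto(v+u-2r)^{-d/\alpha}$ is absolutely continuous with derivative $\tfrac{2d}{\alpha}(v+u-2r)^{-d/\alpha-1}\ge0$) one gets
\[
\int_0^u (v+u-2r)^{-d/\alpha}\,d\hat U_T(r)=(v-u)^{-d/\alpha}+\frac{2d}{\alpha}\int_0^u \hat U_T([0,r])\,(v+u-2r)^{-d/\alpha-1}\,dr,
\]
and similarly with $\hat U_T([0,r])$ replaced by $\int_0^r\gamma\rho^{\gamma-1}d\rho=r^\gamma$ for the right‑hand side. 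Thus the whole inequality follows once one controls the ratio $\hat U_T([0,r])/r^\gamma=U(Tr)/(U(T)r^\gamma)$ uniformly in $r\in(0,1]$ for $T$ large.

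The key analytic input is the renewal asymptotics recalled in Remark 4.6: $U(T)\sim T^\gamma/\Gamma(1+\gamma)$ as $T\to\infty$. The plan is to upgrade this to a \emph{uniform} two‑sided bound: since $1-F$ is regularly varying of index $-\gamma$ with $\gamma\in(0,1)$, the renewal function $U$ is regularly varying of index $\gamma$, and by the uniform convergence theorem for regularly varying functions (Potter's bounds, e.g.\ \cite[Thm.~1.5.6]{B.et.al.}) there is, for any $\varepsilon>0$, a threshold $M_{\alpha,d,\gamma}$ so that
\[
\frac{U(Tr)}{U(T)}\le (1+\varepsilon)\,r^{\gamma-\varepsilon}\le C_1\, r^{\gamma}\cdot r^{-\varepsilon},\qquad 0<r\le1,\ T>M_{\alpha,d,\gamma},
\]
for a constant $C_1=C_1(\varepsilon)$. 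Choosing $\varepsilon$ small enough that $\gamma-\varepsilon-1>-1$ (so the extra $r^{-\varepsilon}$ factor is still integrable against $(v+u-2r)^{-d/\alpha-1}\,dr$ near $r=0$, using $d<\alpha(1+\gamma)$) one absorbs $r^{-\varepsilon}$ into a constant after bounding $(v+u-2r)^{-d/\alpha-1}\le(v-u)^{-d/\alpha-1}\wedge(2r)^{-d/\alpha-1}$ or, more simply, by comparing directly the two inner integrals term by term. Feeding this back through the integration‑by‑parts identity gives
\[
\int_0^u (v+u-2r)^{-d/\alpha}\,d\hat U_T(r)\le C\int_0^u (v+u-2r)^{-d/\alpha}\,r^{\gamma-1}\,dr
\]
for all $u\le v$ in $[0,1]$ and all $T>M_{\alpha,d,\gamma}$, and integrating this in $u$ over $[s,v]$ and then in $v$ over $[s,t]$ (the outer integrations being harmless since everything is nonnegative) yields the claimed inequality with the same constant $C$.

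I expect the main obstacle to be the behaviour near $r=0$ and near the diagonal $r=u=v$, where the kernel $(v+u-2r)^{-d/\alpha}$ blows up: one must check that the finiteness of $\int_0^u(v+u-2r)^{-d/\alpha}r^{\gamma-1}\,dr$ (which holds precisely because $d<\alpha(1+\gamma)$ guarantees $-d/\alpha+\gamma>-1$ at the diagonal after the change of variables, and $\gamma-1>-1$ at $r=0$) is not destroyed by the extra $r^{-\varepsilon}$ coming from Potter's bound. This is exactly why $\varepsilon$ must be chosen small depending on $\alpha,d,\gamma$, and why the threshold $M_{\alpha,d,\gamma}$ in the statement is allowed to depend on all three parameters. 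An alternative, slightly cleaner route that avoids integration by parts is to note that $\hat U_T\Rightarrow\gamma r^{\gamma-1}\mathbf 1_{(0,1)}(r)\,dr$ weakly (Remark 4.6) and to argue that the family of inner integrals is, for $T>M_{\alpha,d,\gamma}$, dominated by a fixed multiple of the limiting integral via a Helly‑type argument together with the monotonicity of the kernel in $r$; but the Potter‑bound approach gives the explicit uniform constant $C$ most transparently, so that is the route I would write up.
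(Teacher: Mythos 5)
Your central reduction does not work, and the difficulty it hides is exactly what the paper's proof is designed to avoid. First, the integration-by-parts identity is wrong: for the increasing kernel $\phi(r)=(v+u-2r)^{-d/\alpha}$ the correct formula is
$\int_0^u\phi(r)\,d\hat U_T(r)=\phi(u)\,\hat U_T([0,u])-\int_0^u\hat U_T([0,r])\,\phi'(r)\,dr$,
so the term involving $\hat U_T([0,r])$ enters with a \emph{minus} sign, and an upper bound on $U(Tr)/(U(T)r^{\gamma})$ does not bound the left-hand side from above. More structurally: since the kernel is increasing in $r$, the layer-cake formula shows that your pointwise-in-$(u,v)$ comparison $\int_0^u\phi\,d\hat U_T\le C\int_0^u\phi(r)\,\gamma r^{\gamma-1}dr$, uniformly in $0\le u\le v\le1$, is equivalent to an increment bound of the type $U(Tu)-U(Tr)\le C\,U(T)\,(u^{\gamma}-r^{\gamma})$ for all $0\le r<u\le1$. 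That is a local (strong-renewal-theorem type) estimate on $U$; it does not follow from $U(T)\sim T^{\gamma}/\Gamma(1+\gamma)$, from the uniform convergence $U(Tr)/U(T)\to r^{\gamma}$, or from Potter's bounds, and under (\ref{tail}) alone it is not available (for $\gamma\le 1/2$ the strong renewal theorem can fail without extra hypotheses). In addition, the uniform bound $U(Tr)/U(T)\le(1+\varepsilon)r^{\gamma-\varepsilon}$ for \emph{all} $r\in(0,1]$ is itself unjustified: Potter's inequality requires both $T$ and $Tr$ to exceed a threshold, and the hypotheses put no constraint on $F$ (hence on $U$) near the origin, so the bound can fail for $r$ of order $1/T$ and smaller. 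Your fallback ``Helly-type domination'' suffers from the same increment-control problem and is too vague to repair this.

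Note also that you are trying to prove more than the lemma asserts, and the extra strength is precisely where the obstruction sits. The paper never compares the inner integrals for fixed $(u,v)$: it integrates in $u$ and $v$ \emph{first}, which converts the singular kernel into the explicit function $f_{t,s}(r)$ built from $(t-r)^{2-d/\alpha}$, $(s-r)^{2-d/\alpha}$ and $(t+s-2r)^{2-d/\alpha}$; since $2-d/\alpha>0$, this function is bounded and uniformly continuous on $[0,2]$, with bounds independent of $s,t\in[0,1]$. A step-function approximation of $f_{t,s}$ together with the uniform convergence $U(Tr)/U(T)\to r^{\gamma}$ on $[0,1]$ (Remark \ref{remark_revision}) then shows that $\int_0^t f_{t,s}(r)\,d\frac{U(Tr)}{U(T)}$ is within a uniformly small error of $\int_0^t f_{t,s}(r)\,\gamma r^{\gamma-1}dr$ for $T>M_{\alpha,d,\gamma}$, which is all the lemma needs; only weak convergence of $\hat U_T$ is used, no local renewal information. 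To salvage your plan you would have to either impose additional assumptions guaranteeing a strong renewal theorem, or do what the paper does and smooth the kernel by the outer integrations before invoking the renewal asymptotics.
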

	\begin{proof} We give the proof  for the case  $d\neq\alpha$ only; the case $d=\alpha$ can be worked in a similar way. We have
		\begin{equation*}
			\begin{split}
				&\int_s^t\int_s^v\int_0^u (v+u-2r)^{-d/\alpha} d\frac{U(Tr)}{U(T)}du\,dv=\int_{s}^{t}\int_{r}^{t}\int_r^v(v+u-2r)^{-\frac{d}{\alpha}} \, du \, dv \, d\frac{U(Tr)}{U(T)}\\
				&+\int_{0}^{s}\int_{s}^{t}\int_s^v (v+u-2r)^{-\frac{d}{\alpha}} \, du \, dv \, d\frac{U(Tr)}{U(T)}\\
				&=\int_{s}^{t}\int_{r}^{t}\frac{(2^{1-\frac{d}{\alpha}}-1)}{1-\frac{d}{\alpha}}(v-r)^{1-\frac{d}{\alpha}} \, dv \, d\frac{U(Tr)}{U(T)}+\int_{0}^{s}\int_{s}^{t}\Bigg[ \frac{2^{1-\frac{d}{\alpha}}(v-r)^{1-\frac{d}{\alpha}}-(v+s-2r)^{1-\frac{d}{\alpha}}}{1-\frac{d}{\alpha}}\Bigg] \, dv \, d\frac{U(Tr)}{U(T)}\\
				&=\int_0^t f_{t,s}(r)\,d\frac{U(Tr)}{U(T)},
			\end{split}
		\end{equation*}
		with
		\begin{eqnarray*}
			f_{t,s}(r)&:=&\frac{(2^{1-\frac{d}{\alpha}}-1)(t-r)^{2-\frac{d}{\alpha}}}{(2-\frac{d}{\alpha})(1-\frac{d}{\alpha})}\mathbf{1}_{\{s\leq r\leq t\}} \\
			&&
			+\frac{2^{1-\frac{d}{\alpha}}\left((s-r)^{2-\frac{d}{\alpha}}+(t-r)^{2-\frac{d}{\alpha}}-2^{\frac{d}{\alpha}-1}(t+s-2r)^{2-\frac{d}{\alpha}}\right)}{(2-\frac{d}{\alpha})(1-\frac{d}{\alpha})}\mathbf{1}_{\{0\leq r<s\}}.
		\end{eqnarray*}
		Due to the term $(t+s-2r)^{2-\frac{d}{\alpha}}$, the function $f_{t,s}$ is supported in the interval $[0,2]$. Since the function
		$x\mapsto x^{2-\frac{d}{\alpha}}$ is bounded and uniformly continuous over the interval $[0,2]$, for any $\epsilon>0$ there exists $\delta>0$ such that given $x,y\in [0,2]$,
		\[ \left|x^{2-\frac{d}{\alpha}}-y^{2-\frac{d}{\alpha}}\right|<\epsilon\quad \mbox{whenever}\quad |x-y|<\delta.\]
		Now, given $\delta>0$ there exist $k \in \mathbb{N}$ and $x_0,x_1,...,x_k \in \mathbb{R}_{+}$ such that $x_0=0<x_1<\cdots<x_k=2$ and $|x_i-x_{i-1}|<\frac{\delta}{2}$ for  $i=1,2,...,k$. We set  $g_{t,s}(r):=\sum_{i=1}^{k}f_{t,s}(x_{i-1})1_{[x_{i-1},x_i)}(r)$. Hence, for any $r\in[0,2]$,
		$$
		|f_{t,s}(r)-g_{t,s}(r)|\le \frac{\left|(2^{1-\frac{d}{\alpha}}-1)\right|\epsilon}{\left|(2-\frac{d}{\alpha})(1-\frac{d}{\alpha})\right|}\mathbf{1}_{\{s\leq r\leq t\}} + 
		\frac{2^{1-\frac{d}{\alpha}}\left(2\epsilon+2^{\frac{d}{\alpha}-1}\epsilon\right)}{\left|(2-\frac{d}{\alpha})(1-\frac{d}{\alpha}\right|)}\mathbf{1}_{\{0\leq r<s\}}\le M_{\alpha,d}\epsilon,
		$$
		for some positive constant $M_{\alpha,d}$ depending only on $\alpha$ and  $d$. Moreover, we  choose $M_{\alpha,d}$ so that  $|f_{t,s}(r)|\le M_{\alpha,d}$ for $r\in[0,2]$.
		Therefore,
		\begin{eqnarray} \nonumber
				& &\Bigg|\int_{0}^{t}f_{t,s}(r)\frac{dU(Tr)}{U(T)}-\int_{0}^{t}f_{t,s}(r)\gamma r^{\gamma-1}dr\Bigg|\leq \Bigg|\int_{0}^{t}f_{t,s}(r)\frac{dU(Tr)}{U(T)}-\int_{0}^{t}g_{t,s}(r)\frac{dU(Tr)}{U(T)}\Bigg|\\ \nonumber
				&  &+\Bigg|\int_{0}^{t}g_{t,s}(r)\frac{dU(Tr)}{U(T)}-\int_{0}^{t}g_{t,s}(r)\gamma r^{\gamma-1}dr\Bigg|+\Bigg|\int_{0}^{t}g_{t,s}(r)\gamma r^{\gamma-1}dr-\int_{0}^{t}f_{t,s}(r)\gamma r^{\gamma-1}dr\Bigg|\\ \label{ExTrA}
				& &\leq \epsilon M_{\alpha,d} \frac{U(Tt)}{U(T)}+M_{\alpha,d}\sum_{i=1}^{k}\Bigg|\frac{U(Tx_i)-U(Tx_{i-1})}{U(T)}-(x_i^{\gamma}-x_{i-1}^{\gamma})\Bigg|1_{\{x_i\leq t\}}+\epsilon M_{\alpha,d}t^{\gamma}
		\end{eqnarray}
		On the other hand, since $\frac{U(Tr)}{U(T)}\to r^{\gamma}$ uniformly on  $[0,1]$, there  exists $M_{\alpha,d,\gamma}>0$ such that for $T>M_{\alpha,d,\gamma}$:
		$$\Bigg|\frac{U(Tx_i)-U(Tx_{i-1})}{U(T)}-(x_i^{\gamma}-x_{i-1}^{\gamma})\Bigg|1_{\{x_i\leq t\}}<\frac{\epsilon}{M_{\alpha,d}k}\text{ for all }i=1,2,...,k.$$
		Plugging this inequality into (\ref{ExTrA}) yields the result. \ter
	\end{proof}
	
	To bound from above  in a useful way the second term $II$  we proceed as follows.
	Given  $s,t\in[0,1]$ with $s\leq t$, since $\hat{\psi}$ is bounded we have
	\begin{eqnarray*}
		II &\leq& C_\psi \frac{2T^{d/\alpha-\gamma}}{(2\pi)^d}\int_s^t\int_s^v\int_{\Rd}\int_0^ue^{-T(v+u-2r)|y|^\alpha}dU(Tr)\,dy\,du\,dv
		\\&=& C_{\psi}\frac{2T^{d/\alpha-\gamma}}{(2\pi)^d}\int_{\Rd}e^{-|y|^\alpha}\, dy \int_s^t\int_s^v\int_0^u\frac{(v+u-2r)^{-d/\alpha}}{T^{d/\alpha}}\,dU(Tr)\,du\,dv
		\\&=& C_{\psi}\frac{2}{(2\pi)^d}\frac{U(T)}{T^{\gamma}}\int_{\Rd}e^{-|y|^\alpha} dy \int_s^t\int_s^v\int_0^u (v+u-2r)^{-d/\alpha} d\left[\frac{U(Tr)}{U(T)}\right]du\,dv,
	\end{eqnarray*}
	where condition (\ref{tail}) implies that $U(T)\sim T^{\gamma}/\Gamma(1+\gamma)$ as $T\to\infty$.
	Therefore, from  Lemma \ref{lemma_1_revision} and taking $T\ge M_{\alpha, d, \gamma}$ bigger if necessary, we deduce that 
	\begin{equation}\label{IIfirst-estimation}
		II \ \leq \  C(\psi,\alpha,\gamma) \frac{2}{(2\pi)^d}\int_s^t\int_s^v\int_0^u (v+u-2r)^{-d/\alpha} r^{\gamma-1}
		\, dr \, du\,dv
	\end{equation}
	for some constant $C(\psi,\alpha,\gamma)>0.$
	For $u<v$, we have  \begin{eqnarray}\label{TightnessIntegral}
		\int_0^u (v+u-2r)^{-d/\alpha} r^{\gamma-1}
		\, dr &=&2^{-\gamma}\int_{0}^{2u}(u+v-r)^{-\frac{d}{\alpha}}r^{\gamma-1}dr\nonumber
		\\&=&2^{-\gamma}(u+v)^{\gamma-\frac{d}{\alpha}}\int_{0}^{\frac{2u}{u+v}}(1-r)^{-\frac{d}{\alpha}}r^{\gamma-1}dr.
	\end{eqnarray}
	To deal with the last integral we  work separately the two cases $\alpha\gamma<d<\alpha$ and $\alpha\leq d<\alpha(1+\gamma)$.
	
	\noindent{\bf Case $\alpha\gamma<d<\alpha$.} For the integral that appears in (\ref{TightnessIntegral}),
	$$
	\int_{0}^{\frac{2u}{u+v}}(1-r)^{-\frac{d}{\alpha}}r^{\gamma-1}\,dr\leq \int_{0}^{1}(1-r)^{-\frac{d}{\alpha}}r^{\gamma-1}\,dr={\cal B}(1-d/\alpha,\gamma)\equiv C<\infty,
	$$
	where ${\cal B}(p,q)$ denotes the beta function. It follows that
	\begin{eqnarray*}
		\int_s^t\int_s^v\int_0^u (v+u-2r)^{-d/\alpha} r^{\gamma-1}
		\, dr \, du\,dv&<&2^{-\gamma}C\int_s^t\int_s^v(v+u)^{\gamma-d/\alpha} du\,dv
		\\&=&2^{-\gamma}C\int_s^t \left[(2v)^{1+\gamma-d/\alpha}-(v+s)^{1+\gamma-d/\alpha}\right] dv.
	\end{eqnarray*}
	Since condition $\alpha\gamma<d<\alpha$ implies  $0<1+\gamma-d/\alpha<1$, using Hölder continuity we get
	$$
	\int_s^t \left[(2v)^{1+\gamma-d/\alpha}-(v+s)^{1+\gamma-d/\alpha}\right] \,dv \leq  C_1\int_s^t (v-s)^{1+\gamma-d/\alpha} \, dv
	=\frac{C_1}{2+\gamma-d/\alpha}(t-s)^{2+\gamma-d/\alpha}.
	$$
	We conclude that for sufficiently large $T$,
	\begin{equation}\label{lowerdimensions}
		II \ < \  C(\psi,d,\alpha,\gamma)(t-s)^{h} \quad\mbox{with $h=2+\gamma-d/\alpha.$
		}
	\end{equation}
	
	\noindent{\bf Case $\alpha\leq d<\alpha(1+\gamma)$.} Notice that
	
	\begin{eqnarray*}
		\int_{0}^{\frac{2u}{u+v}}(1-r)^{-\frac{d}{\alpha}}r^{\gamma-1}\,dr\begin{cases}\leq\displaystyle \int_{0}^{\frac{1}{2}}(1-r)^{-\frac{d}{\alpha}}r^{\gamma-1}\,dr,& \mbox{if }\frac{2u}{v+u}<\frac{1}{2},\\
			$\,$ \\
			= \displaystyle\int_{0}^{\frac{1}{2}}(1-r)^{-\frac{d}{\alpha}}r^{\gamma-1}\,dr+ \int_{\frac{1}{2}}^{\frac{2u}{u+v}}(1-r)^{-\frac{d}{\alpha}}r^{\gamma-1}\,dr,& \mbox{if }\frac{2u}{v+u}\geq \frac{1}{2}.
		\end{cases}
	\end{eqnarray*}
	Now, since $\gamma\in(0,1)$ we have that $ \int_{0}^{\frac{1}{2}}(1-r)^{-\frac{d}{\alpha}}r^{\gamma-1}\,dr<\infty$. For the case  $\frac{2u}{v+u}\geq \frac{1}{2}$ we notice that
	$r^{\gamma-1}\leq 2^{-(\gamma-1)}$ for all $r\in\left[ \frac{1}{2},\frac{2u}{v+u}\right].$
	Thus, if $\alpha<d$ we obtain
	\begin{eqnarray*}
		\int_{\frac{1}{2}}^{\frac{2u}{u+v}}(1-r)^{-\frac{d}{\alpha}}r^{\gamma-1}\,dr&\leq &2^{-(\gamma-1)}\int_{\frac{1}{2}}^{\frac{2u}{u+v}}(1-r)^{-\frac{d}{\alpha}}\,dr
		\leq\frac{2^{-(\gamma-1)}}{d/\alpha-1}\left(\frac{v-u}{v+u}\right)^{1-d/\alpha}.
	\end{eqnarray*}
	In fact, in the case we are dealing with, we have that $\gamma-d/\alpha<1-d/\alpha<0$. Therefore,
	\begin{eqnarray*}
		\int_{\frac{1}{2}}^{\frac{2u}{u+v}}(1-r)^{-\frac{d}{\alpha}}r^{\gamma-1}\,dr\leq \frac{2^{-(\gamma-1)}}{d/\alpha-1}\left(\frac{v-u}{v+u}\right)^{\gamma-d/\alpha}.
	\end{eqnarray*}
	If $d=\alpha$,
	\begin{eqnarray*}
		\int_{\frac{1}{2}}^{\frac{2u}{u+v}}(1-r)^{-\frac{d}{\alpha}}r^{\gamma-1}\,dr&\leq &\left(\frac{1}{2}\right)^{\gamma-1}\int_{\frac{1}{2}}^{\frac{2u}{u+v}}(1-r)^{-1}\,dr
		\leq-\left(\frac{1}{2}\right)^{\gamma-1}\ln\left(\frac{v-u}{v+u}\right)
		\\&\leq&C\left(\frac{1}{2}\right)^{\gamma-1}\left(\frac{v-u}{v+u}\right)^{\gamma-d/\alpha},
	\end{eqnarray*}
	for some constant $C>0$, where the last  equality follows from the boundedness of the function $x\mapsto -(\ln x)/x^{\gamma-1}$  on the interval  $[0,\frac{1}{2}]$. Therefore, from (\ref{IIfirst-estimation}), (\ref{TightnessIntegral}) and the last estimates we get that for $T$ large enough, and for some positive constant $C$,
	\begin{equation}\label{upperdimensions}
		II \leq  C\int_s^t\int_s^v (v-u)^{\gamma-d/\alpha} \,du\, dv\leq \frac{C}{h(h-1)}(t-s)^h,
	\end{equation}
	where $h=2+\gamma-d/\alpha$. \hfill$\Box$

	We are now ready  to state and prove the following
	\begin{proposition} \label{ThightnessProp}Let  $\alpha\gamma<d<\alpha(1+\gamma)$ and $H_T^2=T^{2+\gamma-d/\alpha}$. There exists a constant $M_{\alpha,d,\gamma}>0$ such that the sequence of processes  $\{\J_T,\ T\geq {M_{\alpha,d,\gamma}}\}$  is tight, where $\J_T$ is defined in (\ref{rescaled}).
	\end{proposition}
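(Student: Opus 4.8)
The plan is to reduce tightness of the $\s$-valued family $\{\J_T\}$ to tightness of its real-valued one-dimensional projections, and to feed the latter into the classical Kolmogorov/Billingsley tightness criterion, using as input precisely the second-moment increment bound assembled in the preceding pages. Concretely: by Mitoma's theorem it suffices to show that, for every fixed $\varphi\in\ss$, the family of real-valued processes $\{\langle\varphi,\J_T(\cdot)\rangle:\ T\ge M_{\alpha,d,\gamma}\}$ is tight in $C([0,1],\R)$ (that $\J_T$ has continuous $\s$-valued paths at all follows from continuity of $t\mapsto\langle\varphi,J(t)\rangle=\int_0^t\langle\varphi,Z(s)\rangle\,ds$ for every bounded measurable $\varphi$, together with nuclearity of $\ss$). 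Fix such a $\varphi$. Since $\langle\varphi,\J_T(0)\rangle=0$, the initial values are trivially tight, so it only remains to establish a modulus-of-continuity estimate, and for this it is enough to produce constants $C=C(\varphi)>0$ and $\beta>0$, \emph{independent of $T$}, with
\[\E\big[\langle\varphi,\J_T(t)\rangle-\langle\varphi,\J_T(s)\rangle\big]^2\le C\,|t-s|^{1+\beta},\qquad 0\le s\le t\le 1,\ T\ge M_{\alpha,d,\gamma}.\]
A \emph{second}-moment bound already suffices because, under the standing hypothesis $\alpha\gamma<d<\alpha(1+\gamma)$, the exponent $h:=2+\gamma-d/\alpha$ lies in $(1,2)$, so that $\beta:=h-1>0$.

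It then remains only to read this estimate off from the computations already carried out. Starting from the exact identity (\ref{OccupVar}), $\E[\langle\psi,\J_T(t)\rangle-\langle\psi,\J_T(s)\rangle]^2=I+II$: the term $I$ was bounded by $C(\psi)(t-s)^h$ in (\ref{I-stimation}), with no restriction on $T$, using $1-e^{-x}\le x^\delta$ with $\delta=h-1\in(0,1]$ and the finiteness of $\int_{\Rd}|\hat\psi(y)|^2|y|^{-(d-\alpha\gamma)}\,dy$ (valid since $d>\alpha\gamma$). For the term $II$ I would first invoke Lemma \ref{lemma_1_revision} together with the renewal asymptotics $U(T)\sim T^\gamma/\Gamma(1+\gamma)$ and $U(Tr)/U(T)\to r^\gamma$ of Remark \ref{remark_revision} in order to replace, at the cost of taking $T\ge M_{\alpha,d,\gamma}$, the measure $dU(Tr)/U(T)$ by $r^{\gamma-1}\,dr$, and then estimate the resulting triple integral: by a Beta-function bound and Hölder continuity of $v\mapsto v^{1+\gamma-d/\alpha}$ when $\alpha\gamma<d<\alpha$ (which gives (\ref{lowerdimensions})), and by splitting the inner integral at $1/2$ together with the logarithmic bound for $d=\alpha$ in the complementary range $\alpha\le d<\alpha(1+\gamma)$ (which gives (\ref{upperdimensions})). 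In all cases $II\le C(\psi,d,\alpha,\gamma)(t-s)^h$ for $T$ large enough, with constants uniform in $s,t\in[0,1]$; adding the two contributions yields the displayed bound with $\beta=h-1>0$, and tightness follows.

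The point requiring care is not any single estimate — those are already in place before the statement, and the proof of the Proposition is essentially their assembly — but the structural fact that a \emph{second}-moment increment bound suffices for $C$-tightness, which rests entirely on the strict inequality $d<\alpha(1+\gamma)$, equivalently $h>1$. This is precisely the upper endpoint of the admissible dimension range; at $d=\alpha(1+\gamma)$ one would only have $h=1$ and the argument above degenerates, consistent with the qualitatively different limiting behaviour announced in the Remark following Theorem \ref{main}.
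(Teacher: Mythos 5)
Your proposal is correct and follows essentially the same route as the paper: the paper's proof of Proposition \ref{ThightnessProp} is precisely the assembly of (\ref{OccupVar}), (\ref{I-stimation}), (\ref{lowerdimensions}) and (\ref{upperdimensions}) into the uniform bound $\E[\l\psi,\J_T(t)\r-\l\psi,\J_T(s)\r]^2\leq C|t-s|^{\rho}$ with $\rho=2+\gamma-d/\alpha>1$ for $T$ large, followed by the Billingsley moment criterion for each real-valued projection and Mitoma's theorem to lift tightness to $\s$-valued processes. Your emphasis that the whole argument hinges on $d<\alpha(1+\gamma)$ giving exponent strictly greater than $1$ matches the paper's remark that $\rho>1$ because $1+\gamma-d/\alpha>0$.
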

	\proof From (\ref{OccupVar}), (\ref{I-stimation}), (\ref{lowerdimensions}) and (\ref{upperdimensions}) it follows that, for $T$ large enough,
	\begin{equation}
		\E\LL[\l\psi,\J_T(t)\r-\ph\l\psi,\J_T(s)\r\RR]^2\leq C|t-s|^{\rho},\quad s,t\geq0,
	\end{equation}
	where $\rho=2+\gamma-d/\alpha>1$ because $1+\gamma-d/\alpha>0$ due to the assumption $d<\alpha(1+\gamma)$. From  \cite[Theorem 13.5]{Billingsley}  we get that for each $\psi\in\ss$ the sequence of processes $\{\l\psi,\J_T(t)\r,T\geq {M_{\alpha,d,\gamma}}\}$ is tight for some ${M_{\alpha,d,\gamma}}$ sufficiently large. Using Mitoma's theorem  \cite[Theorem 3.1]{Mitoma} we get the tightness of $\{\J_T,\ T\geq{M_{\alpha,d,\gamma}}\}$.\hfill$\Box$
	
	\subsubsection{Space-time method: convergence to a Gaussian process}
	
	From (\ref{rescaled}) we deduce that the space-time random field associated to $\{\J_T,\ T\geq1\}$ is given by
	\begin{equation*}
		\l\tilde{\Phi},\tilde{\J}_T\r:=\frac{T}{H_T}\left(\int_0^1\l\Psi(\cdot,s),Z(Ts)\r\, ds-\LL\l\int_0^1\Psi(\cdot,s)\,ds,\Lambda\RR\r\right),\quad\tilde{\Phi}\in S(\R^{d+1}),
	\end{equation*}
	with
	$\Psi(x,s)=\int_s^1\tilde{\Phi}(x,t)\,dt.$
	Since the initial population is a Poisson random field with intensity the Lebesgue measure  $\Lambda$,
	\begin{eqnarray}\label{LaplaceOccupationTime}
		\E\left[e^{-\l\tilde{\Phi},\tilde{\J}_T\r}\right]&=&\nonumber\exp\left\{\int_{\R^d}\int_0^T\Psi_T(x,s)\,ds \,dx+\int_{\R^d}\E_x\left(e^{-\int_0^T\l\Psi_T(\cdot,s),Z(s)\r\, ds} -1\right)\,dx\right\}
		\\&=&\exp\left\{\int_{\R^d}\int_0^T\Psi_T(x,s)\,ds\, dx-\int_{\R^d}v_{\Psi_T}(x,0,T)\,dx\right\},
	\end{eqnarray}
	where $v_{\Psi_T}(x,0,T)$ is given in (\ref{LaplaceV}) and
	$
	\Psi_T(x,s)=\frac{1}{H_T}\Psi(x,\frac{s}{T})
	= \frac{1}{H_T}\int^1_{\frac{s}{T}}\tilde{\Phi}(x,t)\,dt
	$
	with $H_T=T^{(2+\gamma-d/\alpha)/2}$.
	\begin{proposition}\label{fdd_convergence_prop} Let $\tilde{\Phi}$ be of the form $\tilde{\Phi}(x,t)=\phi_{1}(x)\phi_{2}(t)$, where   $\phi_1\in \mathcal{S}(\R^{d})_+$ and $\phi_2\in \mathcal{S}(\mathbb{R})_+$. If $\alpha\gamma<d<\alpha(1+\gamma)$, then
		\begin{eqnarray}\nonumber
			\lefteqn{\lim_{T\to \infty}\E\left[e^{-\langle\tilde{\Phi},\tilde{\J}_T\rangle}\right]}\\&&= \label{fdd_convergence_eq}
			\exp\left\{\frac{\gamma\langle\phi_1,\lambda\rangle^{2}}{\Gamma(1+\gamma)(2\pi)^{d}}\LL(\int_{\mathbb{R}^{d}}e^{-|z|^{\alpha}}\,dz\RR)\int_{0}^{1}\int_{0}^{v}\int_{0}^{u}(u+v-2r)^{-{d}/{\alpha}}r^{\gamma-1}\,dr\,\chi(u)\,\chi(v)\,du\,dv\right\},
		\end{eqnarray}
		where $\chi(\cdot)=\int_{\cdot}^{1}\phi_2(s)\,ds$.
	\end{proposition}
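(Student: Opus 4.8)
The plan is to start from the exact Laplace--functional identity \eqref{LaplaceOccupationTime}, which reduces the claim to proving that the exponent
$$
\Theta_T:=\int_{\R^d}\LL[\int_0^T\Psi_T(x,s)\,ds-v_{\Psi_T}(x,0,T)\RR]dx
$$
converges, as $T\to\infty$, to the quadratic expression in \eqref{fdd_convergence_eq}. Since $\tilde\Phi(x,t)=\phi_1(x)\phi_2(t)$ we have $\Psi_T(x,s)=H_T^{-1}\phi_1(x)\chi(s/T)$, so every integral below factorizes into a spatial part, to be evaluated by Fourier transform and Plancherel exactly as in \eqref{CovarFourier}, and a temporal part. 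Into $v_{\Psi_T}(x,0,T)$ I would insert the representation furnished by Lemma \ref{v_charactarization} (recall that here $g(s)=\tfrac12 s^2$), writing $v_{\Psi_T}(x,0,T)=A_T(x)-B_T(x)+C_T(x)$, where $A_T(x)=\int_0^T\S_u[\Psi_T(\cdot,u)f(\cdot,u,T-u)](x)\,du$ with $f$ as in \eqref{h_definition}, $B_T(x)=\tfrac12\int_0^T\S_u\big(v_{\Psi_T}(\cdot,u,T-u)^2\big)(x)\,dU(u)$, and $C_T(x)$ is the triple--integral term.

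Writing $f=1-(1-f)$ in $A_T$ and using the invariance of $\Lambda$ under $\S$, the leading part of $\int_{\R^d}A_T(x)\,dx$ cancels $\int_{\R^d}\int_0^T\Psi_T(x,s)\,ds\,dx$ exactly --- this is the cancellation coming from criticality --- and one is left with
$$
\Theta_T=\int_0^T\!\!\int_{\R^d}\Psi_T(x,u)\big(1-f(x,u,T-u)\big)\,dx\,du+\int_{\R^d}B_T(x)\,dx-\int_{\R^d}C_T(x)\,dx.
$$
For the first term, bounding $1-f(x,u,T-u)\le\int_0^{T-u}\S_w\Psi_T(\cdot,u+w)(x)\,dw$ via \eqref{h1} (using $0\le f\le1$ and $\Psi_T\ge0$) and then applying Plancherel together with the rescalings $u=T\bar u$, $y\mapsto T^{-1/\alpha}z$, one sees it is $O(T^{-\gamma})\to0$, since $H_T^2=T^{2+\gamma-d/\alpha}$. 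The dominant contribution is $\int_{\R^d}B_T(x)\,dx=\tfrac12\int_0^T\int_{\R^d}v_{\Psi_T}(x,u,T-u)^2\,dx\,dU(u)$: here I would replace $v_{\Psi_T}(x,u,T-u)$ by its leading approximation $\int_0^{T-u}\S_w\Psi_T(\cdot,u+w)(x)\,dw$ --- justified by \eqref{LaplaceV2} and \eqref{h1}, the discarded corrections carrying either an extra factor $\Psi_T=O(H_T^{-1})$ or an extra renewal weight and being negligible by power counting --- apply Plancherel in $x$, and then perform the substitutions $u=T\bar u$, $w,w'\mapsto T\bar p,T\bar q$, $y\mapsto T^{-1/\alpha}z$, together with $U(T)\sim T^\gamma/\Gamma(1+\gamma)$ and the weak convergence $dU(Tr)/U(T)\Rightarrow\gamma r^{\gamma-1}\,dr$ from Remark \ref{remark_revision}. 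The powers of $T$ combine to $T^0$, and using $\int_{\R^d}e^{-t|z|^\alpha}\,dz=t^{-d/\alpha}\int_{\R^d}e^{-|z|^\alpha}\,dz$, $\hat\phi_1(0)=\langle\phi_1,\lambda\rangle$, and a Fubini step that rewrites the region $\{\bar u<\bar p,\ \bar u<\bar q\}$, by symmetry in $\bar p\leftrightarrow\bar q$, as twice the region $\{r<u<v\}$ (with $(r,u,v)=(\bar u,\bar p,\bar q)$), one recovers precisely the right-hand side of \eqref{fdd_convergence_eq}. Finally $0\le\int_{\R^d}C_T(x)\,dx$ carries, after using $\Lambda$--invariance and $e^{-\int\Psi_T}\le1$, three factors of $\Psi_T$, hence $H_T^{-3}$, and the same scaling yields a net exponent $-\gamma/2-d/(2\alpha)<0$, so $\int_{\R^d}C_T(x)\,dx\to0$.

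All the limit and dominated--convergence interchanges above are legitimized by the bounds already obtained in the tightness part: up to constants, the integrals appearing in $B_T$ and $C_T$ are precisely those estimated in \eqref{I-stimation}, \eqref{lowerdimensions} and \eqref{upperdimensions}, which supplies a $T$--uniform integrable majorant, while the pointwise convergence follows from the weak convergence of $\hat U_T$ together with the continuity of $\hat\phi_1$ at the origin and of the kernel $t\mapsto e^{-t|z|^\alpha}$. The main obstacle, as is typical for this method, is exactly the bookkeeping of the error terms: one must check that everything beyond the single quadratic functional --- the nonlinear corrections hidden inside $v_{\Psi_T}$ in $B_T$, the whole of $C_T$, and the $1-f$ remainder in $A_T$ --- vanishes in the limit, and it is here that the dimension restriction $\alpha\gamma<d<\alpha(1+\gamma)$ is used in an essential way, being exactly the range that makes the relevant $T$--exponents negative and the spatial integrals $\int_{\R^d}|\hat\phi_1(y)|^2|y|^{-(d-\alpha\gamma)}\,dy$ finite. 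Once \eqref{fdd_convergence_eq} is established for product test functions, its right-hand side is the Laplace transform at argument $1$ of a centered Gaussian variable with the stated variance, which simultaneously yields Gaussianity of the limit and identifies the covariance of Theorem \ref{main1}(i).
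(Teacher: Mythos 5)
Your outline follows the paper's own route: the same Laplace--functional identity (\ref{LaplaceOccupationTime}), the same decomposition of the exponent through Lemma \ref{v_charactarization} and (\ref{h1}) (your three pieces correspond to the paper's $I_1$, $I_2+I_3$ and $I_4$ in (\ref{I1})--(\ref{I4})), the same identification of the quadratic main term, and the same rescaling combined with the renewal weak convergence (\ref{Renewallimit}) to produce the limit; your treatment of the main term and of the triple-integral term $C_T$ matches the paper's Steps 1 and 3.

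There is, however, a genuine gap at the point you dispatch by ``power counting'': the replacement of $v_{\Psi_T}$ by its linear approximation inside $B_T$, i.e.\ the proof that $I_2(T)$ in (\ref{I2}) vanishes. Saying that the discarded corrections ``carry an extra factor $\Psi_T=O(H_T^{-1})$ or an extra renewal weight'' is not by itself an argument: after extracting that factor one is left with integrals against $\mathcal{U}(T-s)\,ds\,dx$ (and an additional $dU$) which grow with $T$, and whether the net exponent is negative is exactly what must be proved. The paper's Step 4 bounds $\bigl(\int_0^s\S_u\Psi_T\bigr)^2-v_{\Psi_T}^2$ by $2\bigl(\int_0^s\S_u\Psi_T\bigr)$ times the correction terms of (\ref{b1}), applies Cauchy--Schwarz with respect to the measure $\mathcal{U}(T-s)\,ds\,dx$ to reduce everything to $\sqrt{I(T)}\bigl(\sqrt{R_1(T)}+\sqrt{R_2(T)}\bigr)$, and then proves $R_1(T),R_2(T)\to0$ through convolution estimates, Young's inequality and $U(T)/T^{d/\alpha}\to0$ (this is where $d>\alpha\gamma$ enters); it is the longest part of the proof and your sketch offers no substitute for it. Your appeal to the tightness bounds (\ref{I-stimation}), (\ref{lowerdimensions}), (\ref{upperdimensions}) as uniform majorants does not close this, since those bounds control the quadratic (second-moment) functional, not the cubic and quartic functionals hidden in the corrections. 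A further, minor, slip: the rate $O(T^{-\gamma})$ you claim for the $\Psi_T(1-f)$ term is obtained only when $d<\alpha$; for $d\geq\alpha$ the rescaled spatial integral $\int_0^1 w^{-d/\alpha}\,dw$ diverges, and one needs the interpolation $\int_0^T e^{-s|x|^\alpha}\,ds\leq T^{1-\delta}|x|^{-\alpha\delta}$ of (\ref{revision-w}) with a suitable choice of $\delta$, as in the paper's Step 2; the term still vanishes, but not by the computation you describe.
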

	\proof The proof will be divided into four steps.
	Using Lemma \ref{v_charactarization}, (\Ref{h1}) and recalling that $g(s)=\frac{s^{2}}{2}$, we have
	\begin{eqnarray*}
		\lefteqn{\int_{\R^d}\int_0^T\Psi_T(x,s)\,ds\, dx-\int_{\R^d}v_{\Psi_T}(x,0,T)\,dx}\\
		&=&\int_{\R^d}\int_0^T\Psi_T(x,s)\,ds\, dx-\int_{\R^d}\int_0^T\S_u\left(\Psi_T(\cdot,u)f(\cdot,u,T-u)\right)(x)\,du\,dx\\&&+\int_{\R^d}\int_0^T\S_u\left(g(v_{\Psi_T}(\cdot,u,T-u)\right)(x)\,dU(u)\,dx
		\\&&-\int_{\R^d}\int_0^T\int_0^{T-z}\!\! \S_z\left[\Psi_T(\cdot,z)\E_\cdot\left(e^{-\int_{0}^{u}\Psi_T(\xi_s^{\cdot},z+s)\,ds}g(v_{\Psi_T}(\xi_{u}^{\cdot},u+z,T-z-u))\right)\right](x)\mathcal{U}(u+z)\,du\,dz\,dx
		\\&=&\int_{\R^d}\int_0^T\Psi_T(x,s)\,ds \,dx-\int_0^T \int_{\R^d}\Psi_T(x,u)f(x,u,T-u)\,dx\, du +
		\int_0^T\int_{\R^d} g(v_{\Psi_T}(x,u,T-u)\,dx \,dU(u)
		\\&&
		-\int_{\R^d}\int_0^T\int_0^{T-z}\!\! \S_z\left[\Psi_T(\cdot,z)\E_\cdot\left(e^{-\int_{0}^{u}\Psi_T(\xi_s^{\cdot},z+s)\,ds}g(v_{\Psi_T}(\xi_{u}^{\cdot},u+z,T-z-u))\right)\right](x)\mathcal{U}(u+z)\,du\,dz\,dx,
	\end{eqnarray*}
	where to get the second equality we have used that the Lebesgue measure is invariant for the $\alpha$-stable semigroup. Thus, we write
	\[\int_{\R^d}\int_0^T\Psi_T(x,s)\,ds \,dx-\int_{\R^d}v_{\Psi_T}(x,0,T)\,dx\equiv I_1(T)+I_2(T)+I_3(T)+I_4(T),\]
	where
	\begin{eqnarray}\label{I1}
		I_1(T):=\int_{\R^d}\int_0^T\Psi_T(x,s)\,ds \,dx-\int_0^T \int_{\R^d}\Psi_T(x,u)h(x,u,T-u)\,dx\, du,
	\end{eqnarray}
	\begin{eqnarray}\label{I2}
		I_2(T):=\int_0^T\int_{\R^d} \left[g(v_{\Psi_T}(x,T-s,s))-\frac{1}{2}\left(\int_0^{s}\S_u\Psi_T(\cdot,T+u-s)(x)\,du\right)^{2}\right]\mathcal{U}(T-s)\,dx\,ds,
	\end{eqnarray}
	\begin{eqnarray}\label{I3}
		I_3(T):=\frac{1}{2}\int_{0}^{T}\int_{\mathbb{R}^{d}}\left(\int_0^{s}\S_u\Psi_T(\cdot,T+u-s)(x)\,du\right)^{2}\mathcal{U}(T-s)\,dx\,ds,
	\end{eqnarray}
	\begin{eqnarray}\label{I4}
		\lefteqn{I_4(T):=}\\&&\nonumber-\int_{\R^d}\int_0^T\int_0^{T-r} \S_r\left[\Psi_T(\cdot,r)\E_\cdot\left(e^{-\int_{0}^{u}\Psi_T(\xi_s^{\cdot},r+s)ds}g(v_{\Psi_T}(\xi_{u}^{\cdot},u+r,T-r-u))\right)\right](x)\mathcal{U}(u+r)\,du\,dr\,dx.
	\end{eqnarray}
	We are going to show that
	\begin{equation}\label{limitstocero}
		\lim_{T\rightarrow\infty}I_i(T)=0,\quad i\in\{1,2,4\},
	\end{equation}
	and
	\begin{equation}\label{theconvergence}
		\lim_{T\rightarrow\infty}I_3(T)=\frac{\gamma\langle\lambda,\phi_1\rangle^{2}}{\Gamma(1+\gamma)(2\pi)^{d}}\LL(\int_{\mathbb{R}^{d}}e^{-|z|^{\alpha}}\,dz\RR)\int_{0}^{1}\int_{0}^{v}\int_{0}^{u}(u+v-2r)^{-{d}/{\alpha}}r^{\gamma-1}\,dr\,\chi(u)\,\chi(v)\,du\,dv;
	\end{equation}
	here and below we set  $\chi(t):=\int_{t}^{1}\phi_2(s)\,ds$ and $\chi_T(t):=\chi(\frac{t}{T})$. 
	
	\noindent{\bf Step 1. Proof of (\ref{theconvergence}).} Performing suitable changes of variables,  (\ref{I3}) can be re-written as
	\begin{eqnarray}\label{I3-1}
		2I_3(T)&=&\nonumber\int_{0}^{T}\int_{\mathbb{R}^{d}}\left(\int_{s}^{T}\S_{u-s}\Psi_{T}(\cdot,u)(x)\,du\right)^{2}\mathcal{U}(s) \,dx\, ds
		\\&=& \nonumber\int_{0}^{T}\int_{\mathbb{R}^{d}}\int_s^T\int_s^T\S_{u-s}\Psi_{T}(\cdot,u)(x)\S_{v-s}\Psi_{T}(\cdot,v)(x)\, du \,dv\,\mathcal{U}(s)\, dx \,ds
		\\&=&\frac{1}{H_T^2} \int_{0}^{T}\int_s^T\int_s^T\int_{\mathbb{R}^{d}}\S_{u-s}\phi_1(\cdot)(x)\S_{v-s}\phi_{1}(\cdot)(x)\, dx\,{\chi_T(u)\chi_T(v)}\, du\, dv\,\mathcal{U}(s)\,  ds.
	\end{eqnarray}
	From Plancherel's formula   we have
	\[\int_{\mathbb{R}^{d}}\S_{u-s}\phi_1(\cdot)(x)\S_{v-s}\phi_{1}(\cdot)(x) dx=\frac{1}{(2\pi)^d}\int_{\Rd}\widehat{\S_{u-s}\phi_1}(x)\overline{\widehat{\S_{v-s}\phi_1}}(x)dx.\]
	Moreover, $\widehat{\S_{t}\phi_1}(x)=e^{-t|x|^\alpha}\overline{\hat{\phi_1}(x)}$. Thus, from (\ref{I3-1}) we  get
	\begin{eqnarray}
		2I_3(T)&=& \nonumber\frac{1}{(2\pi)^dH_T^2}\int_{0}^{T}\int_s^T\int_s^T\int_{\mathbb{R}^{d}} e^{-(u-s)|x|^\alpha}\overline{\hat{\phi_1}(x)}e^{-(v-s)|x|^\alpha}{\hat{\phi_1}(x)}\chi_T(u)\chi_T(v) \, dx \,du \,dv\,\mathcal{U}(s) \, ds\\
		&=&\nonumber\frac{1}{(2\pi)^dH_T^2}\int_{0}^{T}\LL[\int_s^T\int_s^T\int_{\mathbb{R}^{d}} e^{-(u+v-2s)|x|^\alpha}\left|\hat{\phi_1}(x)\right|^2 \chi_T(u)\chi_T(v) \,dx \,du\, dv\RR]\,\mathcal{U}(s) \, ds.
	\end{eqnarray}
	After the change of variables $s=Tr$ we obtain
	\begin{eqnarray*}
		2I_3(T)&=& \nonumber\frac{T}{(2\pi)^dH_T^2}\int_{0}^{1}\int_{Tr}^T\int_{Tr}^T\int_{\mathbb{R}^{d}} e^{-(u+v-2Tr)|x|^\alpha}\left|\hat{\phi_1}(x)\right|^2 \chi_T(u)\chi_T(v) \, dx \, du \, dv\,\mathcal{U}(Tr) \, dr
		\\&=&\frac{T^3}{(2\pi)^dH_T^2}\int_{0}^{1}\int_r^1\int_r^1\int_{\mathbb{R}^{d}} e^{-(u+v-2r)T|x|^\alpha}\left|\hat{\phi_1}(x)\right|^2 \chi(u)\chi(v)\, dx\, du\, dv\,\mathcal{U}(Tr) \,dr,
	\end{eqnarray*}
	where to get the last identity we again changed variables.
	Performing further the change of variables $z=((u+v-2r)T)^{1/\alpha}x,$ the last expression is equivalent to
	\begin{eqnarray*}
		I_3(T)&=&\frac{T^{3-d/\alpha}}{(2\pi)^dH_T^2}\int_{0}^{1}\int_r^1\int_r^1\int_{\mathbb{R}^{d}} e^{-|z|^\alpha}\left|\hat{\phi_1}\left(((u+v-2r)T)^{-1/\alpha}z\right)\right|^2 \\&&\times(u+v-2r)^{-d/\alpha}\chi(u)\chi(v) \,  dz\,  du\,  dv\, \mathcal{U}(Tr) \, dr.
	\end{eqnarray*}
	Since $H_T^2=T^{2+\gamma-d/\alpha}$, we obtain
	\begin{eqnarray*}
		I_3(T)&=&\frac{1}{(2\pi)^d}\int_{0}^{1}\int_r^1\int_r^1\int_{\mathbb{R}^{d}} e^{-|z|^\alpha}\left|\hat{\phi_1}\left(((u+v-2r)T)^{-1/\alpha}z\right)\right|^2 \\&&\times(u+v-2r)^{-d/\alpha}\chi(u)\chi(v) \,  dz\,  du\,  dv\, d \left[\frac{U(Tr)}{T^\gamma} \right].
	\end{eqnarray*}
	Hence, changing the order of integration, by the weak convergence of  (\ref{Renewallimit}) and uniform convergence of the integrand we get  $$
	\lim_{T\rightarrow\infty }I_3(T)=\frac{|\hat{\phi_1}(0)|^2}{(2\pi)^d}\frac{\gamma}{\Gamma(1+\gamma)}\left(\int_{\mathbb{R}^{d}} e^{-|z|^\alpha}dz\right)\int_{0}^{1}\int_0^v\int_0^u (u+v-2r)^{-d/\alpha}r^{\gamma-1} \, dr\,\chi(u)\,\chi(v) \,  du\,  dv.
	$$  
	\ter
	
	\noindent{\bf Step 2. Proof of (\ref{limitstocero}) for $i=1$.}
	Using equation (\ref{h1}) we have that
	\begin{eqnarray}\label{I1.3}
		I_1(T)=\int_{\Rd}\int_0^T\Psi_T(x,u)\int_0^{T-u} \S_s\left(\Psi_T(\cdot,u+s)f(\cdot,u+s,T-u-s)\right)(x)\,ds\, du\, dx.
	\end{eqnarray}
	Notice that $f\leq 1$ because by assumption $\Psi$ is nonnegative (see (\ref{h_definition})). Therefore, letting $c>0$ be an upper bound for $\phi_2$,
	
	\begin{eqnarray*}
		I_1(T)&\leq&\frac{c}{H_T^2}\int_{\Rd}\int_0^T \int_0^{T} \phi_1(x)\left(\S_s\phi_1\right)(x) \, ds\, du\, dx
		=\frac{cT}{H_T^2} \int_{\Rd}\left|\hat{\phi}_1(x)\right|^{2}\int_{0}^{T}e^{-s|x|^{\alpha}}\, ds\, dx.
	\end{eqnarray*}
	Then, clearly, for any $\delta \in [0,1]$ we have
	\begin{equation}\label{revision-w}\int_{0}^{T}e^{-s|x|^{\alpha}}ds\leq T\wedge \left(\frac{1}{|x|^{\alpha}}\right)\leq T^{1-\delta}\frac{1}{|x|^{\alpha \delta}}.\end{equation}
	If $d>\alpha$ we use (\ref{revision-w}) with $\delta=1$. If $\gamma \alpha<d\leq \alpha$ we set $\delta=\frac{d}{\alpha}-\frac{\gamma}{2}$ to obtain that $I_1(T)$ converges to $0$. \ter

	\noindent{\bf Step 3. Proof of (\ref{limitstocero}) for $i=4$.}  By performing the change of variables $v=T-r-u$ in (\ref{I4})
	we obtain
	\begin{eqnarray*}
		\lefteqn{-I_4(T)=}\\&&\int_{\R^d}\int_0^T\int_0^{T-r} \S_r\left[\Psi_T(\cdot,r)\E_\cdot\left(e^{-\int_{0}^{T-r-v}\Psi_T(\xi_s,r+s)\,ds}g(v_{\Psi_T}(\xi_{T-r-v}^{\cdot},T-v,v))\right)\right](x)\,\mathcal{U}(T-v)\,dv\,dr\,dx,
	\end{eqnarray*}
	where, due to (\ref{LaplaceV2}) and the fact that $k\ge0$,
	\begin{eqnarray*}
		v_\Psi(x,r,s)& = &
		1-f(x,r,s) -\int_0^sk(x,r,s-v)\,{\cal U}(s-v)\,dv\ \le \ 1-f(x,r,s)\\
		&=& \int_0^s\S_u\LL[\Psi(\cdot,r+s)f(\cdot,r+s,s-u)\RR](x)\,du
		\leq \int_0^s \S_{s-u}\Psi(\cdot,r+s-u)(x)\,du
	\end{eqnarray*}
	since
	$|f|\le1$ due to  (\ref{h_definition}),
	where the second equality follows from (\ref{h1}).
	Hence,
	\begin{equation}\label{vlessthantheintegral}
		v_{\Psi_T}(x,T-v,v)\leq\int_0^v\S_{v-l}\left[\Psi_T(\cdot,T-l)\right](x)\,dl= \int_0^v\S_l\left[\Psi_T(\cdot,T-v+l)\right](x)\,dl.
	\end{equation}
	Using that $g(s)=\frac{s^2}{2}$ and the fact that
	$\Psi_T\geq0$, we get
	\begin{eqnarray*}
		-I_4(T)&\leq&\frac{1}{2}\int_{\R^d}\int_0^T\int_0^{T-r} \S_r\left[\Psi_T(\cdot,r)\S_{T-r-v}\Big(\int_0^v\S_l\Psi_T(\cdot,T-v+l)\,dl\Big)^2\right](x)\,\mathcal{U}(T-v)\,dv\,dr\,dx.
	\end{eqnarray*}
	{By self-similarity of the semigroup $(\S_t)_{t\geq 0}$,  and
		changing the order of the integration with respect to $v$ and $r$ we obtain
		\begin{equation*}
			\begin{split}
				-I_4(T)&\leq \frac{C}{H_T^{3}}\int_{\mathbb{R}^{d}}\int_{0}^{T}\int_{0}^{T-r}\S_{T-t-v}\phi_1(x)\Bigg(\int_{0}^{T}\S_{l}\phi_1(x)dl\Bigg)^{2}\mathcal{U}(t-v)\,dv\,dr\,dx\\
				&\leq\frac{C}{H_T^{3}}\int_{\mathbb{R}^{d}}\Bigg(\int_{0}^{T}\S_{l}\phi_1(x)\,dl\Bigg)^{2}\,dx\,U(T).
			\end{split}
		\end{equation*}
		We also have
		$$\sup_{x}\Bigg(\int_{0}^{T}\S_{l}\phi_1(x)\,dl\Bigg)\leq C_1(\alpha,\phi_1,d)F_T,$$
		where
		\begin{equation*}
			\label{eq:aqui-le-mostramos-como-hacerle-la-llave-grande}
			F_T = \left\{
			\begin{array}{ll}
				1,      & \mathrm{if\ } d > \alpha, \\
				\log (T), & \mathrm{if\ } d = \alpha,  \\
				T^{1-\frac{d}{\alpha}},     & \mathrm{if\ } d < \alpha.
			\end{array}
			\right.
		\end{equation*}
		Hence, for $d>\alpha$ we can estimate
		$$-I_{4}(T)\leq \frac{C_1}{H_T^{3}}T^{\gamma}\int_{\mathbb{R}^{d}}\Bigg(\int_{0}^{T}\S_l\phi_1(x)\,dl\Bigg)^{2}\,dx=\frac{C_2}{H_T^{3}}T^{\gamma}T^{2-\frac{d}{\alpha}}=\frac{C_2}{H_T}\to 0,$$
		where $C_i\equiv C_i(\alpha,\phi_1,d,\gamma)>0$, $i=1,2$. In the same way,
		if $d<\alpha$ then
		$$-I_{4}(T)\leq \frac{C_1}{H_T^{3}}T^{\gamma}T^{2-2\frac{d}{\alpha}}\int_{\mathbb{R}^{d}}\int_{0}^{T}\S_l\phi_1(x)\,dl\,dx=C_2\frac{T^{3+\gamma-2\frac{d}{\alpha}}}{T^{3+\frac{3}{2}\gamma-\frac{3}{2}\frac{d}{\alpha}}}=C_2T^{-\frac{\gamma}{2}-\frac{d}{2\alpha}}\to 0.$$
		A similar argument woks for $d=\alpha$. This finishes the proof of (\ref{limitstocero}) for $i=4$.\medskip
		\ter
		
	}

	\noindent{\bf Step 4. Proof of (\ref{limitstocero}) for $i=2$.}   This part can be proved in a similar way as in  \cite{BGTbeta} p. 512-515. Notice that inequality (\ref{vlessthantheintegral}) implies
	$\left(\int_0^{s}\S_u\Psi_T(\cdot,T+u-s)(x)du\right)^{2}-(v_{\Psi_T}(x,T-s,s))^2\geq0.$
	Since  $g(s)=\frac{s^2}{2}$, it follows that
	$$
	0\leq -2I_2(T)=\int_0^T\int_{\R^d} \left[\left(\int_0^{s}\S_u\Psi_T(\cdot,T+u-s)(x)du\right)^{2}-(v_{\Psi_T}(x,T-s,s))^2\right]\mathcal{U}(T-s)\,dx\,ds.
	$$
	We will prove that, as $T\to\infty$,
	\begin{equation}
		\int_{0}^{T}\int_{\mathbb{R}^{d}}\LL[\LL(\int_{0}^{s}\S_u\Psi_{T}(\cdot,T+u-s)du\RR)^{2}-v_{\Psi_T}(x,T-s,s)^{2}\RR]\mathcal{U}(T-s)\,dx\,ds \to 0.
	\end{equation}
	Indeed, from (\ref{vaux}) we obtain
	\begin{eqnarray*}\lefteqn{
			-v_{\Psi_T}(x,T-s,s)}\\ &&\leq -\int_{0}^{s}\S_u\Psi_T(\cdot,T-s+u)f(\cdot,T-s+u,s-u)(x)\,du+\frac{1}{2}\int_{0}^{s}\S_uv_{\Psi_T}^{2}(x,T-s+u,s-u)\,dU(u).
	\end{eqnarray*}
	Therefore,
	\begin{eqnarray}\nonumber
		0&\leq& \int_{0}^{s}\S_u\Psi_T(\cdot,T+u-s)(x)du-v_{\Psi_T}(x,T-s,s)\\ \nonumber &\leq& \int_{0}^{s}\S_u\Psi_T(\cdot,T-s+u)\LL(1-f(\cdot,T-s+u,s-u)(x)\RR)\,du \\ \label{a1}
		&&   +\frac{1}{2}\int_{0}^{s}\S_uv_{\Psi_T}^{2}(x,T-s+u,s-u)\,dU(u).
	\end{eqnarray}
	Also, from (\ref{h1}) and (\ref{LaplaceV2}),
	\begin{equation}\label{a2}
		1-f(\cdot,T-s+u,s-u)\leq \int_{0}^{s-u}\S_w\Psi_T(\cdot,T-s+u+w)\,dw
	\end{equation}
	and
	\begin{equation}\label{a3}
		v_{\Psi_T}^{2}(\cdot,T-s+u,s-u)\leq \LL(\int_{0}^{s-u}\S_w\Psi_T(\cdot,T-s+u+w)dw\RR)^{2}.
	\end{equation}
	Thereby,
	\begin{equation}\label{a4}
		\frac{1}{2}\int_{0}^{s}\S_uv_{\Psi_T}^{2}(x,T-s+u,s-u)dU(u)\leq \frac{1}{2}\int_{0}^{s}\S_u\LL(\int_{0}^{s-u}\S_w\Psi_{T}(x,T-s+u+w)\,dw\RR)^{2}dU(u).
	\end{equation}
	From (\ref{a1}), (\ref{a2}), (\ref{a3}) and (\ref{a4}),
	\begin{eqnarray}\nonumber
		0&\leq&  \int_{0}^{s}\S_u\Psi_T(\cdot,T+u-s)(x)\,du-v_{\Psi_T}(x,T-s,s)
		\\ \nonumber
		&\leq & \int_{0}^{s}\S_u\LL(\Psi_T(\cdot,T+u-s)
		\int_{0}^{s-u}\S_w\Psi_T(\cdot,T-s+u+w)\RR)(x)\,dw\,du\\ \label{b1}
		&&+\frac{1}{2}\int_{0}^{s}\S_u\LL(\int_{0}^{s-u}
		\S_w\Psi_{T}(x,T-s+u+w)\,dw\RR)^{2}dU(u).
	\end{eqnarray}
	In addition, from (\ref{vlessthantheintegral})
	\begin{equation*}
		\begin{split}
			0&\leq \LL(\int_{0}^{s}\S_u\Psi_T(,T+u-s)(x)du\RR)^{2}-v_{\Psi_T}^{2}(x,T-s,s)\\
			&=\Bigg(\int_{0}^{s}\S_u\Psi_T(,T+u-s)(x)du-v_{\Psi_T}(x,T-s,s)\Bigg)\Bigg(\int_{0}^{s}\S_u\Psi_T(,T+u-s)(x)du+v_{\Psi_T}(x,T-s,s)\Bigg)\\
			& \leq 2\int_{0}^{s}\S_u\Psi_T(,T+u-s)(x)du\Bigg(\int_{0}^{s}\S_u\Psi_T(,T+u-s)(x)du-v_{\Psi_T}(x,T-s,s)\Bigg)\\
			&\text{where due to }(\ref{b1})\\
			&\leq 2\int_{0}^{s}\S_u\Psi_T(,T+u-s)(x)\,du\cdot\int_{0}^{s}\S_u\LL(\Psi_T(\cdot,T+u-s)\int_{0}^{s-u}\S_w\Psi_T(\cdot,T-s+u+w)\RR)(x)\,dw\,du\\
			&+\int_{0}^{s}\S_u\Psi_T(,T+u-s)(x)\,du\cdot\int_{0}^{s}\S_u\LL(\int_{0}^{s-u}\S_w\Psi_{T}(x,T-s+u+w)\,dw\RR)^{2}dU(u).
		\end{split}
	\end{equation*}
	We define
	\begin{eqnarray*}
		{R}_1(T)&=&\int_{\mathbb{R}^{d}}\int_{0}^{T}\LL(\int_{0}^{s}\S_u\LL(\Psi_T(\cdot,T+u-s)\int_{0}^{s-u}\S_w\Psi_T(\cdot,T-s+w+u)\,dw\RR)(x)du\RR)^{2}\mathcal{U}(T-s)\,ds\,dx,
		\\
		R_2(T)&=&\int_{\mathbb{R}^{d}}\int_{0}^{T}\LL(\int_{0}^{s}\S_u\LL(\int_{0}^{s-u}\S_w\Psi_T(\cdot,T-s+u+w)\,dw\RR)^{2}dU(u)\RR)^{2}\mathcal{U}(T-s)\,ds\,dx.
	\end{eqnarray*}
	Then, by the Cauchy-Schwarz inequality applied to the measure
	$\int_{\mathbb{R}^{d}}\int_{0}^{T}\mathcal{U}(T-s)\,ds\,dx$ it follows that
	\begin{equation*}
		\begin{split}
			\int_{\mathbb{R}^{d}}&\int_{0}^{T}\LL(\int_{0}^{s}\S_u\Psi_T(T+u-s)(x))^{2}-v_{\Psi_T}^{2}(x,T-s,s)\RR)\mathcal{U}(T-s)\,ds\,dx\\
			&\leq C_1\sqrt{I(T)}(\sqrt{R_1(T)}+\sqrt{R_2(T)}).
		\end{split}
	\end{equation*}
	We need to show that $R_1(T)\to0$ and $R_2(T)\to 0$ as $T\to \infty$. Indeed, for $R_1(T)$,
	\begin{equation}\nonumber
		\begin{split}
			R_1(T)&\leq C\frac{T}{H_T^{4}}\int_{\mathbb{R}^{d}}\int_{0}^{1}\LL(\int_{0}^{Ts}\S_u\LL(\phi_1(\cdot)\int_{0}^{Ts-u}\S_w\phi_1(\cdot)\,dw\RR)(x)\,du\RR)^{2}\mathcal{U}(T(1-s))\,ds\,dx\\
			&\leq C\frac{T^{4}U(T)}{H_T^{4}}\int_{\mathbb{R}^{d}}\LL(\int_{0}^{1}\S_{Tu}\LL(\phi_1(\cdot)\int_{0}^{1}\S_{Tw}\phi_1(\cdot)\,dw\RR)(x)\,du\RR)^{2}dx.
		\end{split}\phantom{XXXXXXXXXX}
	\end{equation}
	Following similar arguments as in \cite[(3.30)-(3.33)]{BGTbeta}, from here we can deduce that $\lim_{T\to \infty}R_1(T)=0.$
	
	We now work the term $R_2(T)$. We define 
	$$r(x)=\int_{0}^{1}p_u(x)\,du,\quad f_{1,T}(x)=\int_{0}^{1}p_{u,\alpha}(x)\frac{\mathcal{U}(Tu)}{T^{\gamma-1}}\,du,\quad  g_{1,T}(x)=T^{\frac{d}{\alpha}}\phi_1(T^{\frac{1}{\alpha}}x).$$
	Here $\|f_{1,T}\|_{1}\leq \frac{U(T)}{T^{\gamma}}$, which is bounded uniformly in $T$ for $T$ sufficiently large. Moreover, it can be shown, as in \cite{BGTbeta}, that  $\|g_{1,T}\|_{1}=\|\phi_1\|_{1}<\infty$ and $\|r\|_2<\infty$. Making the change of variables $s'=\frac{s}{T}$ gives
	$$R_2(T)\leq C\frac{T}{H_{T}^{4}}\int_{\mathbb{R}^{d}}\int_{0}^{1}\LL(\int_{0}^{Ts}\S_u\LL(\int_{0}^{Ts-u}\S_w\phi_1(\cdot)\,dw\RR)^{2}(x)\mathcal{U}(u)du\RR)^{2}\mathcal{U}(T(1-s))\,ds\,dx.$$
	Making the change of variables $u'=\frac{u}{T}$ yields
	$$R_2(T)\le C\frac{T^{2+1}}{H_T^{4}}\int_{\mathbb{R}^{d}}\int_{0}^{1}\LL(\int_{0}^{s}\S_{Tu}\LL(\int_{0}^{T(s-u)}\S_w\phi_1(\cdot)\,dw\RR)^{2}(x)\mathcal{U}(Tu)\,du\RR)^{2}\mathcal{U}(T(1-s))\,ds\,dx.$$
	Making the change of variables $w'=\frac{w}{T}$ renders
	\begin{eqnarray*}R_2(T)
		&\le&C\frac{T^{2+1+(2)(2)}}{H_T^{4}}\int_{\mathbb{R}^{d}}\int_{0}^{1}\LL(\int_{0}^{s}\S_{Tu}\LL(\int_{0}^{s-u}\S_{Tw}\phi_1(\cdot)\,dw\RR)^{2}(x)\mathcal{U}(Tu)\,du\RR)^{2}\mathcal{U}(T(1-s))\,ds\,dx\\
		&\leq& C\frac{T^{2+1+(2)(2)}}{H_T^{4}}\frac{U(T)}{T}\int_{\mathbb{R}^{d}}\LL[\int_{0}^{1}\S_{Tu}\LL(\int_{0}^{1}\S_{Tw}\phi_1(\cdot)\,dw\RR)^{2}(x)\frac{\mathcal{U}(Tu)}{T^{\gamma-1}}du\RR]^{2}\,dx.
	\end{eqnarray*}
	By self-similary property of $(\S_u)_{u\geq 0}$ and making the changes of variables  $x'=T^{-\frac{1}{\alpha}}x,$\ $y'=T^{-\frac{1}{\alpha}}y,$ and $ z'=T^{-\frac{1}{\alpha}}z$,
	\begin{eqnarray*}R_2(T)
		&\le&C\frac{U(T)}{T^{\frac{d}{\alpha}}}\int_{\mathbb{R}^{d}}\LL[\int_{\mathbb{R}^{d}}\int_{0}^{1}p_{u,\alpha}(x-y)\frac{\mathcal{U}(Tu)}{T^{\gamma-1}}\,du\LL(\int_{\mathbb{R}^{d}}\int_{0}^{1}p_{w,\alpha}(y-z)\,dw T^{\frac{d}{\alpha}}\phi_1(T^{\frac{1}{\alpha}}z)\,dz\RR)^{2}dy\RR]^{2}\,dx\\
		&=&C\frac{U(T)}{T^{\frac{d}{\alpha}}}\|f_{1,T}*(r*g_{1,T})^{2}\|_{2}^{2},
	\end{eqnarray*}
	and $\|f_{1,T}\|_{1}$ is bounded uniformly in $T$ for $T$ sufficiently large. Hence
	$$
	R_2(T)\
	\leq\ C\frac{U(T)}{T^{\frac{d}{\alpha}}}\|f_{1,T}\|_{1}^{2}\|r*g_{1,T}\|_{2}^{2}
	\leq C\frac{U(T)}{T^{\frac{d}{\alpha}}}\|f_{1,T}\|_{1}^{2}\|r\|_{2}^{4}\|g_{1,T}\|_{1}^{4},$$
	where we have used Young's inequality.
	Again, as $\|g_{1,T}\|_1=\|\phi_1\|_1$ and $\alpha\gamma<d<\alpha(1+\gamma)$,  we will have $\frac{U(T)}{T^{\frac{d}{\alpha}}}\overset{T\to \infty}{\to}0$ and $\|r\|_{2}<\infty$. Therefore, $\lim_{T\to \infty}R_2(T)=0.$
	
	We have proved that both $R_1(T)$ and $R_2(T)$ tend to $0$ as $T \to \infty$. This proves Step 4 and shows that (\ref{limitstocero}) holds.\hfill$\Box$
	\begin{lemma}  Under the assumptions in Proposition \ref{fdd_convergence_prop}, the limit (\ref{fdd_convergence_eq}) can be written as
		\begin{eqnarray}\label{ffd_convergence_eq1}
			\lefteqn{\lim_{T\rightarrow \infty}\E\left[e^{-\langle\Psi,\tilde{\J}_T\rangle}\right]} \nonumber
			\\
			&=&\begin{cases}\exp\left(\frac{\gamma\langle\phi_1,\lambda\rangle^{2}}{\Gamma(\gamma+1)(2\pi)^d(2-\frac{d}{\alpha})}\int_{\mathbb{R}^{d}}e^{-|y|^{\alpha}}dy\int_{0}^{1}\int_{0}^{1}Q(w,z)\phi_2(w)\phi_2(z)\,dw\,dz \right), &\mbox{ if }d\neq \alpha, \\
				$\,$\\
				\exp\left(\frac{\gamma\langle\phi_1,\lambda\rangle^{2}}{2\Gamma(\gamma+1)(2\pi)^d}\int_{\mathbb{R}^{d}}e^{-|y|^{\alpha}}dy\int_{0}^{1}\int_{0}^{1}K(w,z)\phi_2(w)\phi_2(z)\,dw\,dz \right),&\mbox{ if }d=\alpha,
			\end{cases}
		\end{eqnarray}
		where
		\begin{equation}\label{Q}
			Q(w,z)=\LL(\frac{d}{\alpha}-1\RR)^{-1}\int_{0}^{z\wedge w}s^{\gamma-1}\left[(w\wedge z-s)^{2-\frac{d}{\alpha}}+(w\vee z-s)^{2-\frac{d}{\alpha}}-(w+z-2s)^{2-\frac{d}{\alpha}}\right]ds
		\end{equation}
		and
		\begin{equation*}
			K(w,z):=\frac{1}{2}\int_{0}^{z\wedge w}s^{\gamma-1}\Bigg[(w+z-2s)\ln(w+z-2s)-(w-s)\ln(w-s)-(z-s)\ln(z-s)\Bigg]ds.
		\end{equation*}
	\end{lemma}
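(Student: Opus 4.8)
The plan is to take the exponent on the right-hand side of (\ref{fdd_convergence_eq}) --- up to the constant $\frac{\gamma\langle\phi_1,\lambda\rangle^{2}}{\Gamma(1+\gamma)(2\pi)^{d}}\int_{\mathbb{R}^d}e^{-|z|^\alpha}\,dz$ it is the triple integral
\[
I:=\int_{0}^{1}\int_{0}^{v}\int_{0}^{u}(u+v-2r)^{-d/\alpha}r^{\gamma-1}\,dr\,\chi(u)\,\chi(v)\,du\,dv
\]
--- and to compute it in closed form, recognizing the kernels $Q$ and $K$. First I would restore $\phi_2$ by writing $\chi(u)\chi(v)=\int_{u}^{1}\int_{v}^{1}\phi_2(w)\phi_2(z)\,dw\,dz$ and note that the factor $(u+v-2r)^{-d/\alpha}r^{\gamma-1}\chi(u)\chi(v)$ is symmetric under $u\leftrightarrow v$. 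Since the simplices $\{0<r<u<v<1\}$ and $\{0<r<v<u<1\}$ together exhaust, up to a null set, the region $\{0<r<\min(u,v),\ u,v<1\}$, this symmetry gives
\[
I=\frac12\int_{0}^{1}r^{\gamma-1}\Big(\int_{r}^{1}\int_{r}^{1}(u+v-2r)^{-d/\alpha}\chi(u)\chi(v)\,du\,dv\Big)\,dr,
\]
and this factor $\frac12$ must be carried along.

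Second, substituting the product form of $\chi(u)\chi(v)$ and interchanging the order of integration by Tonelli --- all integrands are nonnegative, and integrability holds since $\gamma>0$ and, in the range $\alpha\gamma<d<\alpha(1+\gamma)$, one has $0<2-d/\alpha$ together with $d<2\alpha$ --- one arrives at
\[
I=\frac12\int_{0}^{1}\int_{0}^{1}\phi_2(w)\phi_2(z)\Big(\int_{0}^{w\wedge z}r^{\gamma-1}\int_{r}^{w}\int_{r}^{z}(u+v-2r)^{-d/\alpha}\,du\,dv\,dr\Big)\,dw\,dz,
\]
the lower limit $w\wedge z$ of the $r$-integral arising because the $u$- and $v$-intervals $(r,w)$ and $(r,z)$ must be nonempty.

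The heart of the argument is the elementary inner double integral. After the shift $a=u-r$, $b=v-r$ it becomes $\int_{0}^{w-r}\int_{0}^{z-r}(a+b)^{-d/\alpha}\,da\,db$. For $d\neq\alpha$, two applications of the power rule --- the boundary terms at $a=0$ and $b=0$ vanishing because $2-d/\alpha>0$ --- yield
\[
\int_{r}^{w}\int_{r}^{z}(u+v-2r)^{-d/\alpha}\,du\,dv=\frac{1}{(1-d/\alpha)(2-d/\alpha)}\Big[(w+z-2r)^{2-d/\alpha}-(w-r)^{2-d/\alpha}-(z-r)^{2-d/\alpha}\Big],
\]
which, using $(1-d/\alpha)^{-1}=-(d/\alpha-1)^{-1}$, equals $(2-d/\alpha)^{-1}(d/\alpha-1)^{-1}\big[(w-r)^{2-d/\alpha}+(z-r)^{2-d/\alpha}-(w+z-2r)^{2-d/\alpha}\big]$; inserting this and integrating in $r$ against $r^{\gamma-1}$ reproduces $(2-d/\alpha)^{-1}Q(w,z)$ with $Q$ as in (\ref{Q}). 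For $d=\alpha$ one instead uses $\int(a+b)^{-1}\,da=\ln(a+b)$ and $\int\ln x\,dx=x\ln x-x$, the linear terms cancelling, to get
\[
\int_{r}^{w}\int_{r}^{z}(u+v-2r)^{-1}\,du\,dv=(w+z-2r)\ln(w+z-2r)-(w-r)\ln(w-r)-(z-r)\ln(z-r),
\]
which is exactly the bracketed expression in the definition of $K(w,z)$. Substituting these identities into the last display for $I$ and collecting the normalizing constants gives the two cases of the claim.

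The difficulties are not conceptual but lie in the bookkeeping: performing the $u\leftrightarrow v$ symmetrization correctly (the factor $\frac12$, and the attendant consistency of the constants in the limit $d\to\alpha$, where $Q$ degenerates into $K$), justifying the Fubini--Tonelli interchanges over the constrained regions, and, in the borderline case $d=\alpha$, checking that the logarithmic boundary contributions at the origin together with the accompanying linear terms cancel so that only the $x\ln x$-type kernel survives. One should also note that $d=2\alpha$, for which $2-d/\alpha=0$, is excluded by the standing hypothesis $d<\alpha(1+\gamma)$.
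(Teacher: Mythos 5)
Your argument is correct and is essentially the paper's own proof: the paper likewise substitutes $\chi(u)=\int_u^1\phi_2(w)\,dw$, uses the $u\leftrightarrow v$ symmetry (your factor $\tfrac12$) and a change of the order of integration, and evaluates the elementary inner double integral (power rule for $d\neq\alpha$, logarithms with cancelling linear terms for $d=\alpha$) to arrive at the identities (\ref{QC}) and (\ref{KC}). One caveat on your last step of ``collecting the normalizing constants'': your (correct) computation yields the prefactor $\frac{1}{2(2-d/\alpha)}$ of (\ref{QC}) for $d\neq\alpha$, and $\int_0^1\int_0^1K(w,z)\phi_2(w)\phi_2(z)\,dw\,dz$ (with the $\tfrac12$ already inside $K$) for $d=\alpha$, which is what matches Theorem \ref{main1} via $\E[e^{-X}]=e^{\mathrm{Var}(X)/2}$ for a centered Gaussian $X$; the prefactors printed in (\ref{ffd_convergence_eq1}) and the extra $\tfrac12$ in (\ref{KC}) are off by factors of $2$, an inconsistency in the paper's display rather than a gap in your derivation.
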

	\proof We first deal with the case $d\neq\alpha$. Recall that $\chi(u)=\int_u^1\phi_2(w)\,dw$. Substituting this into the triple integral in the  right hand side of (\ref{fdd_convergence_eq}), by symmetry of the function $(w,z)\mapsto \phi_2(w)\phi_2(z)$ and changing the order of integration we conclude that
	\begin{equation}\label{QC}
		\int_{0}^{1}\int_{0}^{u}\int_{0}^{v}s^{\gamma-1}(u+v-2s)^{-\frac{d}{\alpha}}\chi(u)\chi(v) \, ds \, dv \, du=\frac{1}{2(2-\frac{d}{\alpha})}\int_{0}^{1}\int_{0}^{1}Q(w,z)\phi_2(w)\phi_2(z) \, dw \, dz ,
	\end{equation}
	where
	\begin{equation*}
		Q(w,z)=\LL(\frac{d}{\alpha}-1\RR)^{-1}\int_{0}^{z\wedge w}s^{\gamma-1}\Bigg[(w-s)^{2-\frac{d}{\alpha}}+(z-s)^{2-\frac{d}{\alpha}}-(w+z-2s)^{2-\frac{d}{\alpha}}\Bigg] \, ds.
	\end{equation*}
	For the case  $d=\alpha$, similarly as above we can show that
	\begin{equation}\label{KC}
		\int_{0}^{1}\int_{0}^{u}\int_{0}^{v}s^{\gamma-1}(u+v-2s)^{-\frac{d}{\alpha}}\chi(u)\chi(v) \, ds \, dv \, du=\frac{1}{2}\int_{0}^{1}\int_{0}^{1}K(w,z)\phi_2(w)\phi_2(z) \, dw \, dz ,
	\end{equation}
	with
	\begin{equation*}
		K(w,z):=\frac{1}{2}\int_{0}^{z\wedge w}s^{\gamma-1}\Bigg[(w+z-2s)\ln(w+z-2s)-(w-s)\ln(w-s)-(z-s)\ln(z-s)\Bigg] \, ds.
	\end{equation*}
	The proof is finished because the expressions (\ref{QC}) and (\ref{KC}) are equivalent to (\ref{fdd_convergence_eq}) for $d\neq \alpha$ and $d=\alpha$ respectively.
	\ter
	
	\noindent{\bf Proof of Theorem \ref{main1}}. Proposition \ref{ThightnessProp} gives the tightness and Proposition \ref{fdd_convergence_prop} identifies uniquely any limit point of $\{\J_T$, $T\ge0\}$ for non-negative test functions. For general test functions the proof can be done as  in \cite[page 9]{BGT1}. For the sake of brevity we omit the details. \hfill$\Box$
	
	\medskip
	
	\noindent{\bf Proof of Theorem \ref{main}}. The proof of this result can be done following the same lines as the proof of Theorem \ref{main1} but using the fact that, in the case of lifetimes with finite mean  $\mu$, the renewal measure is such that
	\[\frac{U(Tr)}{T}\rightarrow \frac{r}{\mu}\quad\mbox{as $T\to\infty$ for all $r>0$}. \]
	Formally, this can be thought as putting $\gamma=1$ in all the preceding computations.\hfill$\Box$

	\subsection{\bf Proof of Theorem \ref{QgeneralTh}.}

	\noindent{\bf Proof of  (i).}
	Note that,  for $b=2$
	\begin{eqnarray*}
		Q_{a,2}(w,z)&=& 2\int_{0}^{z\wedge w}s^{a}(z-s)(w-s) \, ds, \end{eqnarray*}
	which is a  positive definite function and it is finite if and only if $a>-1$. For the case $b=0$ we have that
	\[Q_{a,0}(w,z)=\frac{1}{a+1}(s\wedge t)^{a+1},\]
	which, for $a>-1$,  corresponds to the covariance function of a time-changed Brownian motion.
	
	Let us consider the case  of $a>-1$ and $0<b<2$, with $b\neq1$.   Observe that
	(\ref{Qgeneral}) can be written as
	\begin{equation*}
		Q_{a,b}(w,z)=\int_0^{w\wedge z } s^a\kappa_b(w-s,z-s)\,ds,
	\end{equation*}
	where \begin{eqnarray*}
		\kappa_b(w,z)=\frac{1}{1-b}\left(w^b+z^b-(w+z)^b\right),
	\end{eqnarray*}
	which is a covariance function for $b\in(0,1)\cup (1,2)$,  see  (2.4) in \cite{BGT-quasi} or (1.1) (with $K=1$) in \cite{Lei-Nualart}. Therefore, $Q_{a,b}$ is a covariance function for $a>-1$ and $b\in(0,1)\cup (1,2)$.
	\ter
	
	\noindent{\bf Proof of  (ii).} Consider $a>-1$ and $-1<b<0$ such that $a+b+1\geq 0$. Note that (\ref{Qgeneral}) can be written as
	\begin{equation}\label{Q-decomp}
		Q_{a,b}(w,z)=\frac{1}{1-b}\left(Q_1(w,z)+Q_2(w,z)\right),
	\end{equation}
	where\begin{equation}\label{Q1}
		Q_1(w,z):=\int_0^{w\wedge z }u^a(w\wedge z-u)^b du=(w\wedge z)^{a+b+1} \int_0^1u^a(1 -u)^b du,
	\end{equation}
	and \begin{equation}
		Q_2(w,z):=\int_0^{w\wedge z} u^a\left[ (w\vee z -u)^b-(w\wedge z +w\vee z-2u)^b\right] du.
	\end{equation}
	Note that,
	\begin{eqnarray*}
		\lefteqn{Q_2(w,z)}\\&=& -b\int_0^{w\wedge z} \int _0^{w\wedge z -u}u^a(w\vee z -u+r)^{b-1} dr du
		\\&=&-b\int_0^{w\wedge z} \int_0^{(w- u)\wedge (z-u)} u^a (w-u+r)^{b-1}\wedge(z-u+r)^{b-1} dr du
		\\&=& -b \int_0^\infty\int_0^\infty\int_0^\infty u ^a \mathbf{1}_{[0,w]}(u) \mathbf{1}_{[0,w-u]}(r)
		\mathbf{1}_{[0,(w-u+r)^{b-1}]}(v) \mathbf{1}_{[0,z]}(u) \mathbf{1}_{[0,z-u]}(r) \mathbf{1}_{[0,(z-u+r)^{b-1}]}(v) \, dv\, dr \, du,
	\end{eqnarray*}
	thus $Q_2$ also is positive definite. Clearly (\ref{Q1}) is non-negative definite if $a+b+1\geq 0$. Then, from (\ref{Q-decomp}) it follows that $Q_{a,b}$ is a covariance function for $a>-1$ and  $-1<b<0$ such that $a+b+1\geq 0$.\ter 
	
	\subsection{ Proof of Lemma \ref{QnotConvariance}}
	In order to prove Lemma \ref{QnotConvariance} we notice that, for $a>-1$ and $b>-1$,
	\begin{equation}\label{App1}
		Q_{a,b}(t,t)=\frac{2-2^b}{1-b}t^{a+b+1}\int_0^1u^a(1-u)^bdu\equiv\frac{2-2^b}{1-b}\mathcal{B}(a+1,b+1)t^{a+b+1}.
	\end{equation}
	The restrictions $a>-1$ and $b>-1$ are necessary for the integral above to be finite, and any $a$ or $b$ out of this range is ruled out. Moreover, for $1\leq t$,
	\begin{equation}\label{App2}
		Q_{a,b}(1,t)=\frac{t^{a+b+1}}{1-b}\int_0^{1/t}u^a(1-u)^bdu+\frac{1}{1-b}\mathcal{B}(a+1,b+1)
		-\frac{t^{a+1}}{1-b}\int_0^{1/t}u^a(t+1-2tu)^{b}du,
	\end{equation}
	whereas for $1\geq t$,
	\begin{eqnarray}
		Q_{a,b}(1,t)&=&\frac{t^{a+b+1}}{1-b} \int_0^1u^a(1-u)^b \,du+\frac{1}{1-b}\int_0^t u^a\left[(1-u)^b-(1+t-2u)^b\right]\, du.
	\end{eqnarray}
	Note that, for the case $b>0$
	\begin{equation}\label{Q-b-positive}
		Q_{a,b}(w,z)=b\int_0^{w\wedge z}  \int_u^{w\vee z}  \int_u^{w\wedge z}  u^a (r+v-2u)^{b-2}\, dv\, dr\, du.
	\end{equation}

	\noindent{\bf Proof of  (i). }
	For $a>-1$ and $-1<b<0$, with $a+b+1< 0$,  the function $Q_{a,b}(\cdot,\cdot)$ is not a covariance. In fact,  from (\ref{App1}) we have that
	\[\sqrt{Q_{a,b}(1,1)Q_{a,b}(t,t)}=\frac{2-2^b}{1-b}\mathcal{B}(a+1,b+1)t^{\frac{a+b+1}{2}}.\] On the other hand, for $0<t<1$ we have that
	\begin{eqnarray}
		Q_{a,b}(1,t)&=&\frac{t^{a+b+1}}{1-b} \int_0^1u^a(1-u)^b \,du+\frac{1}{1-b}\int_0^t u^a\left[(1-u)^b-(1+t-2u)^b\right]\, du
		\\&\geq &\frac{t^{a+b+1}}{1-b} \int_0^1u^a(1-u)^b \,du,
	\end{eqnarray}
	where to get the inequality we have used that the function $u\mapsto (1-u)^b-(1+t-2u)^b\geq 0$ since $-1<b<0$.
	Therefore, whenever $a+b+1<0$, as $t\downarrow0$, $Q_{a,b}(w,z)$ does not satisfy the inequality covariance
	\begin{equation}\label{cov-inequality}
		Q_{a,b}(1,t)\leq \sqrt{Q_{a,b}(1,1)Q_{a,b}(t,t)}.
	\end{equation}
	\ter
	
	\noindent{\bf Proof of  (ii). }
	Take $a>-1$ and $b>2$. Assume first $b>a+3$. From (\ref{App1}) we have that
	\begin{equation}
		\sqrt{Q_{a,b}(1,1)Q_{a,b}(t,t)}=\frac{2^{b}-2}{(b-1)}\mathcal{B}(\alpha+1,b+1)t^{\frac{a+b+1}{2}}.
	\end{equation}
	On the other hand,  from (\ref{Q-b-positive}) it follows that for  $t>1$,
	\begin{eqnarray}
		Q_{a,b}(1,t) &=&   b\int_0^{1}  \int_u^{t}  \int_u^{1}  u^a (r+v-2u)^{b-2}\, dv\, dr\, du\nonumber
		\\ &=& bt^{b-1}\int_0^{1}  \int_{\frac{u}{t}}^{1}  \int_u^{1}  u^a \left(r+\frac{v}{t}-2\frac{u}{t}\right)^{b-2}\, dv\, dr\, du,
	\end{eqnarray}
	which implies that,
	\begin{eqnarray}\label{Qb-positive-big-t}
		\lim_{t\rightarrow\infty }\frac{Q_{a,b}(1,t)}{t^{b-1}}= b\int_0^{1} du  \int_{0}^{1}  dr \int_u^{1} dv  u^a r^{b-2}=\frac{b}{b-1}\int_0^{1} u^a(1-u) du.
	\end{eqnarray}
	Hence, as $t\uparrow\infty$, from (\ref{Qb-positive-big-t}) the left-hand side of (\ref{cov-inequality}) is of order $t^{b-1}$, whereas the right-hand side of (\ref{cov-inequality}) is of order $t^{\frac{a+b+1}{2}}$. Thus, $Q_{a,b}(\cdot,\cdot)$ can not be a covariance function  for $a>-1$ and $b>a+3$, since $b>a+3$ implies that $b-1>(a+b+1)/2$.
	\ter

\subsection{ Proof of Theorem \ref{limit-properties} and Theorem \ref{limit-Bm}}

\subsubsection{Proof of Theorem \ref{limit-properties}}
Since $\zeta$ is a Gaussian process, the proofs are based on properties of its covariance function $Q_{a,b}$ given by (\ref{Qgeneral}).

\noindent{\bf Proof of (i).}
Let  $c$ be a positive constant and  $t\geq 0$. Then,

\begin{eqnarray*}
	Q_{a,b}(ct,ct)&=&\frac{1}{1-b}\int_{0}^{ct}s^{a}\left((ct-s)^{b}+(ct-s)^{b}-(2ct-2s)^{b}\right)\,ds
	=\frac{2-2^{b}}{1-b}\int_{0}^{ct}s^{a}(ct-s)^{b}\,ds\\
	& =& c^{b+a+1}Q_{a,b}(t,t).
\end{eqnarray*}\hfill$\Box$

\noindent{\bf Proof of (ii).} From (\ref{Qgeneral}) it  follows easily that 
\begin{eqnarray}\nonumber
	\E\left[(\zeta(t)-\zeta(s))^{2}\right]
	&=&\frac{1}{1-b}\Bigg[2\int_{s}^{t}u^{a}(t-u)^{b}\,du+2\int_{0}^{s}u^{a}(t+s-2u)^{b}\,du\\ \label{Cov1}
	&&\phantom{XXX}-2^{b}\left(\int_{0}^{t}u^{a}(t-u)^{b}du+\int_{0}^{s}u^{a}(s-u)^{b}du\right)\Bigg]\\ \nonumber
	&=&\frac{2^{b}-2}{b-1}\int_{s}^{t}u^{a}(t-u)^{b}\,du\\ \label{Cov2}
	&&+\frac{1}{b-1}\int_{0}^{s}u^{a}\left[2^{b}(t-u)^{b}+2^{b}(s-u)^{b}-2(t+s-2u)^{b}\right]\,du.
\end{eqnarray}

{\noindent\bf (a).} 
Suppose that $b \in (1,2]$, $-1<a<0$ and $0\leq s<t\leq M$ with $0<t-s\leq 1$, where $M>0$ is a constant. For this case we have in mind (\ref{Cov2}).
For $0\leq u\leq s$ we define
\[r_{t,s}(u)=2^{b}(t-u)^{b}+2^{b}(s-u)^{b}-2(t+s-2u)^{b},\]
hence
$$\frac{dr_{t,s}(u)}{du}=-2^{b}b(t-u)^{b-1}-2^{b}b(s-u)^{b-1}+4b(t+s-2u)^{b-1}.$$
Since $0<b-1\leq1$, the function $u\mapsto u^{b-1}$ is  concave, which implies that  $r_{t,s}$ is non decreasing  and  $r_{t,s}(u)\leq (2^{b}-2)(t-s)^{b}$ for all $ u \in [0,s]$. Therefore,
\[\frac{1}{b-1}\int_{0}^{s}u^{a}\Bigg[2^{b}(t-u)^{b}+2^{b}(s-u)^{b}-2(t+s-2u)^{b}\Bigg]du\leq c_{a,b}(M)(t-s)^{b},\]
where $c_{a,b}(M)=\frac{(2^{b-1}-2)M^{a+1}}{(b-1)(a+1)}.$
On the other hand
\begin{equation}\label{pp22}
	\int_s^t u^a(t-u)^{b}du \leq  (t-s)^{b}\int_{s}^{t}u^{a}du
	\leq  (t-s)^{b}\LL(\frac{t^{a+1}-s^{a+1}}{a+1}\RR)
	\leq  c_a(t-s)^{a+b+1},
\end{equation}
where the last inequality is obtained using that   $t\to t^{a+1}$ is $(a+1)$-Hölder continuous and $c_a$ is a positive constant.   Since $b<a+b+1$ and  $0<t-s\leq 1$, we have shown that
\begin{equation*}
	\E\left[(\zeta(t)-\zeta(s))^{2}\right]\leq \kappa |t-s|^{b},\,\,\,\mbox{for some constant $\kappa>0$.}
\end{equation*}
Suppose now that $0<b<1$. In this case,
\begin{equation}\label{pp12}
	\begin{split}
		\E\left[(\zeta(t)-\zeta(s))^{2}\right]
		&=\frac{2-2^{b}}{1-b}\int_{s}^{t}u^{a}(t-u)^{b}du\\
		&+\frac{1}{1-b}\int_{0}^{s}u^{a}\left[2(t+s-2u)^{b}-2^{b}(t-u)^{b}-2^{b}(s-u)^{b}\right]\,du,
	\end{split}
\end{equation}
with \[\frac{2-2^{b}}{1-b}\int_{s}^{t}u^{a}(t-u)^{b}\,du\leq \frac{2-2^{b}}{1-b}(t-s)^b\int_{s}^{t}u^{a}\,du=
\frac{2-2^{b}}{1-b}(t-s)^b\LL(\frac{t^{a+1}-s^{a+1}}{a+1}\RR)\le
c'_{a,b} (t-s)^{a+b+1},\]
where $c'_{a,b}>0$ is a constant. 
Using that $2(t+s-2u)^{b}-2^{b}(t-u)^{b}-2^{b}(s-u)^{b}\leq (2-2^b)(t-s)^b$ for $0\leq u\leq s$, we get that the second integral in (\ref{pp12}) is bounded from above by 
$$
\frac{2-2^b}{1-b} (t-s)^b\int_0^s u^a du=\frac{2-2^b}{1-b} \frac{s^{a+1}}{a+1}(t-s)^b.
$$
It follows that the process $\zeta$ is locally $\delta$-Hölder continuous for  $0<\delta<b/2$.\bigskip

\noindent{\bf (b).} Let $-1<b\leq0$ and $a+b>0$. In this case we work with (\ref{Cov1}). Notice that
\begin{eqnarray}\label{pp22}
	\int_{s}^{t}u^{a}(t-u)^{b}\,du\leq t^{a}\int_{s}^{t}(t-u)^{b}\,du=\frac{t^{a}}{b+1}(t-s)^{b+1}\leq C_{a,b}(t-s)^{b+1}
\end{eqnarray}
and
\begin{eqnarray}\nonumber
	\int_{0}^{s}u^{a}(t+s-2u)^{b}du&=&\left(\frac{1}{2}\right)^{a+1}\int_{0}^{2s}u^{a}(t+s-u)^{b}\,du=\left(\frac{1}{2}\right)^{a+1}(t+s)^{a+b+1}\int_{0}^{\frac{2s}{t+s}}u^{a}(1-u)^{b}du\nonumber\\ \nonumber
	&\leq& \left(\frac{1}{2}\right)^{a+1}(t+s)^{a+b+1}{\cal B}(a+1,b+1)\\ \label{pp32}
	&\le&2^{b-1}\LL(t^{a+b+1} + s^{a+b+1}\RR){\cal B}(a+1,b+1)
\end{eqnarray}
because the mapping $t\to t^{a+b+1}$ is convex due to $a+b+1>1$.
Also
\begin{equation}\label{pp42}
	\int_{0}^{r}u^{a}(r-u)^{b}\,du=r^{a+b+1}\int_{0}^{1}u^{a}(1-u)^{b}\,du=r^{a+b+1}{\cal B}(a+1,b+1),\quad r\in\{s,t\}.
\end{equation}
Plugging (\ref{pp22})-(\ref{pp42}) into (\ref{Cov1}) yields
\begin{eqnarray*}
		\E\left[(\zeta(t)-\zeta(s))^{2}\right]  &\le&
		\frac{2C_{a,b}}{1-b}
		(t-s)^{b+1} \le \frac{2C_{a,b}}{1-b}|t-s|^{b+1}.
	\end{eqnarray*}
	Thus, $\zeta$ is $\delta$-Hölder continuous for any $0<\delta< (b+1)/2$. \ter

	\noindent{\bf Proof of (iii).} Follows immediately from  (\ref{Qgeneral}).\ter
	
	\noindent{\bf Proof of (iv).} Let us first show that \begin{equation}\label{LRD-firstorder}
		\lim_{T\rightarrow \infty}T^{1-b}\mathcal{Q}(r,v,s+T,t+T)=0.
	\end{equation}  Using (iii), the equalities
	$$
	T^{1-b}\left(\frac{(t+T-u)^{b}-(s+T-u)^{b}}{b}\right)=T^{1-b}\int_{s+T}^{t+T}(w-u)^{b-1}\,dw
	=\int_{s}^{t}\left(\frac{w+T-u}{T}\right)^{b-1}\,dw
	$$
	and the bounded convergence theorem, we obtain
	\begin{equation*}
		\lim_{T\to\infty}  T^{1-b}\int_{r}^{v}u^{a}\left[(t-u)^{b}-(s-u)^{b}\right]du=\frac{b}{a+1}(t-s)\left(v^{a+1}-r^{a+1}\right).
	\end{equation*}
	Similarly we get
	\begin{equation*}
		\lim_{T\to\infty}  T^{1-b}\int_{0}^{r}u^{a}\left[(t+r-2u)^{b}-(s+r-2u)^{b}\right]du=\frac{b}{a+1}(t-s)r^{a+1}.
	\end{equation*}
	The limit (\ref{LRD-firstorder}) follows from
	\begin{equation*}
		\begin{split}
			\lim_{T\rightarrow\infty}T^{1-b}&        \left[\int_{r}^{v}u^{a}\left((t-u)^{b}-(s-u)^{b}\right)du+\int_{0}^{r}u^{a}\left((t+r-2u)^{b}-(s+r-2u)^{b}\right)du\right.\\
			&\left.\phantom{MMMMMMMMMMMMM}-\int_{0}^{v}u^{a}\left((t+v-2u)^{b}-(s+v-2u)^{b}\right)du\right]
			\\ &=\frac{b}{a+1}(t-s)\left(v^{a+1}-r^{a+1}+r^{a+1}-v^{a+1}\right)
			=0.
		\end{split}
	\end{equation*} Now, observe that
	\[\lim_{T\rightarrow \infty}T^{2-b}\mathcal{Q}(r,v,s+T,t+T)=\lim_{T\rightarrow \infty}\frac{T^{1-b}\mathcal{Q}(r,v,s+T,t+T)}{T^{-1}}. \]
	Due to (\ref{LRD-firstorder}) we can use L'Hospital's theorem to calculate the last limit. Recall that,
	\begin{eqnarray*}
		(1-b) \mathcal{Q}(r,v,s+T,t+T)&=&\int_r^v u^a\left[(T+t-u)^b-(T+s-u)^b\right] du
		\\&&-\int_0^v u^a\left[(T+t+v-2u)^b-(T+s+v-2u)^b\right] du
		\\&&+\int_0^r u^a\left[(T+t+r-2u)^b-(T+s+r-2u)^b\right] du
		\\&=&b\int_r^v u^a \int_s^t (T+h-u)^{b-1} dh du -b\int_0^vu^a\int_s^t (T+h+v-2u)^{b-1}\,dh\,du
		\\&&+b\int_0^ru^a\int_s^t (T+h+r-2u)^{b-1}\,dh\,du.
	\end{eqnarray*}
	Therefore,
	\begin{eqnarray*}\lefteqn{
			(1-b) T^{1-b} \mathcal{Q}(r,v,s+T,t+T)}\\
		&=&b\int_r^v u^a \int_s^t \left(1+\frac{h-u}{T}\right)^{b-1} \, dh  \, du -b\int_0^vu^a\int_s^t
		\left(1+\frac{h+v-2u}{T}\right)^{b-1} \, dh \, du
		\\&&+b\int_0^ru^a\int_s^t \left(1+\frac{h+r-2u}{T}\right)^{b-1} \, dh \, du.
	\end{eqnarray*}
	Applying L'Hospital's rule we have that
	\begin{eqnarray*}\lefteqn{
			\lim_{T\rightarrow \infty}(1-b)T^{2-b}\mathcal{Q}(r,v,s+T,t+T)}\\
		&=&\lim_{T\rightarrow\infty}\left[- b(b-1)\int_r^v u^a \int_s^t \left(1+\frac{h-u}{T}\right)^{b-2} (h-u) \, dh \,  du \right]
		\\&& +\lim_{T\rightarrow\infty}\left[b(b-1)\int_0^vu^a\int_s^t
		\left(1+\frac{h+v-2u}{T}\right)^{b-2}(h+v-2u) \, dh \, du\right]
		\\&&- \lim_{T\rightarrow\infty}\left[b(b-1) \int_0^ru^a\int_s^t \left(1+\frac{h+r-2u}{T}\right)^{b-2}(h+r-2u) \, dh \, du\right],
	\end{eqnarray*}
	where \begin{eqnarray*}
		\lefteqn{ - b(b-1)\int_r^v u^a \int_s^t \left(1+\frac{h-u}{T}\right)^{b-2} (h-u) \, dh  \, du}\\&\xrightarrow{T\rightarrow\infty}&-b(b-1)\int_r^vu^a\int_s^t(h-u) \, dh \, du
		\\&=&-b(b-1)\left[\frac{t^2-s^2}{2}\frac{v^{a+1}-r^{a+1}}{a+1}-(t-s)\frac{v^{a+2}-r^{a+2}}{a+2}\right].
	\end{eqnarray*}
	Similarly, one can see that
	\begin{eqnarray*}
		\lefteqn{b(b-1)\int_0^vu^a\int_s^t
			\left(1+\frac{h+v-2u}{T}\right)^{b-2}(h+v-2u) \, dh \, du}
		\\&\xrightarrow{T\rightarrow\infty}& b(b-1)\left[\frac{t^2-s^s}{2}\frac{v^{a+1}}{a+1}-a(t-s)\frac{v^{a+2}}{(a+1)(a+2)}\right],
	\end{eqnarray*}and
	\begin{eqnarray*}
		\lefteqn{-b(b-1) \int_0^ru^a\int_s^t \left(1+\frac{h+r-2u}{T}\right)^{b-2}(h+r-2u) \, dh \, du}
		\\&\xrightarrow{T\rightarrow\infty}& -b(b-1)\left[\frac{t^2-s^s}{2}\frac{r^{a+1}}{a+1}-a(t-s)\frac{r^{a+2}}{(a+1)(a+2)}\right].
	\end{eqnarray*}
	Putting all these limits together we obtain (\ref{LRD-identity}).
	$\,$\hfill$\Box$
	
	\noindent{\bf Proof of (v).} The result follows using \cite[Proposition 13.7]{Kallenberg}  (see also \cite[Section III.8]{Feller2}) and the fact that the function $Q_{a,b}$ given in (\ref{Qgeneral}) does not satisfy
	$$Q_{a,b}(s,t)=\frac{Q_{a,b}(s,r)Q_{a,b}(r,t)}{Q_{a,b}(r,r)},\quad s\leq r\leq t.
	$$ $\,$\hfill$\Box$

	\subsubsection{Proof of Theorem \ref{limit-Bm}}
	
	Since $\zeta$ is a Gaussian process, to prove the desired convergence  it suffices to show convergence of the covariance functions of the rescaled processes. 
	Suppose that $0<s\leq t$ and $b\in(0,1)\cup (1,2]$. It is not difficult to check that
	\begin{eqnarray}\nonumber
		\lefteqn{
			E\LL[(\zeta(s+T)-\zeta(T))(\zeta(t+T)-\zeta(T))\RR]}\\ \nonumber
		&=& b\Bigg(\int_{T}^{s+T}\int_{r}^{t+T}\int_{r}^{s+T}(u+v-2r)^{b-2}r^{a}\,du\,dv\,dr+\int_{0}^{T}\int_{r}^{t+T}\int_{r}^{s+T}(u+v-2r)^{b-2}r^{a}\,du\,dv\,dr\Bigg)\\ \nonumber
		&=:&b(J_1+J_2).
	\end{eqnarray}
	Observe that 
	\begin{equation}\label{covf}
		T^{-a}J_1=\int_0^{s}\int_{r}^{t}\int_{r}^{s}(u+v-2r)^{b-2}\Bigg(\frac{r+T}{T}\Bigg)^{a}\,du\,dv\,dr \to \int_{0}^{s}\int_{r}^{t}\int_{r}^{s}(u+v-2r)^{b-2}\,du\,dv\,dr.
	\end{equation}
	The limit in  (\ref{covf}) corresponds to the covariance of the subfractional Brownian motion with parameter $b+1$, for $b\in(0,1)$ (\cite{BGTWeighted}). Moreover, it is the covariance of negative subfractional Brownian motion if $b\in(1,2)$ (\cite{BGTWeighted}).  
	
	\noindent{\bf Proof of (i). } If $b\in (1,2]$ then, for $0<s\leq t$, we have
	\begin{eqnarray*}
		T^{-a-(b-1)}J_2&=&\int_0^{t}\int_0^{s}2^{b-2}\int_{0}^{1}\left(\frac{u+v}{2T}+1-r\right)^{b-2}r^{a}\,dr\,du\,dv\nonumber
		\\&\overset{T\to \infty}{\to}& 2^{b-2}{\cal B}(a+1,b-1)st.
	\end{eqnarray*}
	This, together with  (\ref{covf}) implies that 
	\[T^{-a-(b-1)}b(J_1+J_2)\overset{T\to \infty}{\to} 2^{b-2}b {\cal B}(a+1,b-1)st,\quad T\rightarrow\infty,\]
	which finishes the proof.
	\ter 
	
	\noindent{\bf Proof of (ii).} If $b\in (0,1)$, $a>-1$ and $a+b+1>0$ then
	\begin{equation}
		\begin{split}
			T^{-a}J_2&=\frac{1}{2}\int_{0}^{s}\int_{0}^{t}\int_{0}^{2T}(u+v+r)^{b-2}\Bigg(\frac{2T-r}{2T}\Bigg)^{a} \, dr \, du \, dv\\
			&=\frac{1}{2}\int_{0}^{s}\int_{0}^{t}\int_{u+v}^{u+v+2T}r^{b-2}\Bigg(\frac{u+v+2T-r}{2T}\Bigg)^{a} \, dr \, du \, dv.
		\end{split}
	\end{equation}
	It is easy to show that
	\begin{equation}\label{J_2}
		T^{-a}J_2\to \frac{1}{2}\int_{0}^{s}\int_{0}^{t}\int_{u+v}^{\infty}r^{b-2} \, dr \, du \, dv=\frac{1}{2(1-b)}\int_{0}^{s}\int_{0}^{t}(u+v)^{b-1} \, du \, dv.   
	\end{equation}
	Therefore, 
	$$T^{-a}b(J_1+J_2)\to \frac{1}{2(b+1)(1-b)}\Bigg(t^{b+1}+s^{b+1}-(t-s)^{b+1}\Bigg)\quad \mbox{ as }
	T\to \infty.$$
	The processes converge to a fractional Brownian motion with Hurst parameter $H = \frac{b + 1}{2}$.
	If $-1<b\leq 0$, $a>-1$ and $a+b+1>0$, then
	\begin{eqnarray}  \nonumber
		\lefteqn{E((\zeta(s+T)-\zeta(T))(\zeta(t+T)-\zeta(T)))}\\ \nonumber
		&=&\frac{1}{1-b}\left[\int_0^{s+T}u^{a}\left((T+t-u)^{b}+(T+s-u)^{b}-(2T+s+t-2u)^{b}\right)du\right.\\ \nonumber
		&+&\int_{0}^{T}u^{a}\left((T-u)^{b}+(T-u)^{b}-(2T-2u)^{b}\right) du
		-\int_{0}^{T}u^{a}\left((T+t-u)^{b}+(T-u)^{b}-(2T+t-2u)^{b}\right) du\\ \nonumber
		&-&\left.\int_{0}^{T}u^{a}\left((T+s-u)^{b}+(T-u)^{b}-(2T+s-2u)^{b}\right) \, du\right]\\ \nonumber
		&=&\frac{1}{1-b}\left[\int_{T}^{T+s}u^{a}\left((T+t-u)^{b}+(T+s-u)^{b}\right) \, du-\int_{T}^{T+s}u^{a}(2T+t+s-2u)^{b} \, du\right]\\       \nonumber
		&+&\frac{1}{1-b}\left[\int_{0}^{T}u^{a}\left((2T+s-2u)^{b}-(2T-2u)^{b}\right) \, du-\int_{0}^{T}u^{a}\left((2T+t+s-2u)^{b}-(2T+t-2u)^{b}\right) \, du\right]\\ \label{84}
		&=&H_1+H_2.
	\end{eqnarray}
	We are going to deal separately with each term in (\ref{84}). Changing variables $u-T\to u$ we get
	\begin{eqnarray}\label{nq1}
		\lefteqn{ T^{-a}\int_{T}^{T+s}u^{a}\left((T+t-u)^{b}+(T+s-u)^{b}\right)\,du}\\ \nonumber
		&=& \int_{0}^{s}\left(1+\frac{u}{T}\right)^{a}\left((t-u)^{b}+(s-u)^{b}\right)\,du
		\overset{T\to \infty}{\to} \int_{0}^{s}\left((t-u)^{b}+(s-u)^{b}\right)du
		=\frac{t^{b+1}+s^{b+1}-(t-s)^{b+1}}{b+1},
	\end{eqnarray}
	and
	\begin{equation}\label{nq2}
		T^{-a}\int_{T}^{T+s} u^{a}(2T+s+t-2u)^{b}du=\int_{0}^{s}(1+\frac{u}{T})^{a}(s+t-2u)^{b}du
		\overset{T\to \infty}{\to}  \frac{(s+t)^{b+1}-(t-s)^{b+1}}{2(b+1)}.
	\end{equation}
	From (\ref{nq1}) and (\ref{nq2}) we deduce that
	\begin{equation}
		\lim_{T\to \infty}T^{-a}H_1=\frac{1}{(1-b)(b+1)}\Bigg(s^{b+1}+t^{b+1}-\frac{1}{2}\Bigg((t-s)^{b+1}+(s+t)^{b+1}\Bigg)\Bigg).
	\end{equation}
	The term $H_2$ equals $bJ_2$ above, and  in this case we also obtain convergence (\ref{J_2}). Thus, the 
	process $\LL\{T^{-\alpha/2}(\zeta(t+T)-\zeta(T)),\ t\ge0\RR\}$
	converges as $T\to\infty$ to a fractional Brownian motion with parameter $(b + 1)/2$. This concludes the proof.
	\ter

	\noindent{{\bf
			Acknowledgment}\ The authors are grateful to an anonymous referee for her/his
		thorough revision of our paper, and for suggesting a number of arguments that significantly shortened and clarified several proofs.}

\end{document}